\documentclass{amsart}  

\usepackage{amsmath, amsthm, amssymb}
%
\newtheorem{thm}{Theorem}
\newtheorem{prop}[thm]{Proposition}
\newtheorem{lem}[thm]{Lemma}
\newtheorem{thm*}{Theorem}
\newtheorem{cor}[thm]{Corollary}
\newtheorem*{thmM2}{Theorem \ref{thm:M2}}
\newtheorem*{thmM4}{Theorem \ref{thm:M4}}
%
%
\begin{document}

\title[Parabolic constructions]{Parabolic constructions of asymptotically flat 3-metrics of Prescribed
scalar curvature
}


\author{Chen-Yun Lin}

\address{Department of Mathematics
196 Auditorium Road, Unit 3009
Storrs, CT 06269-3009, USA}
\email{E-mail address: chen-yun.lin@uconn.edu}

\maketitle

\begin{abstract}
In 1993, Bartnik \cite{Bar93} introduced a quasi-spherical construction
of metrics of prescribed scalar curvature on 3-manifolds. Under quasi-spherical
ansatz, the problem is converted into the initial value problem for
a semi-linear parabolic equation of the lapse function. The original
ansatz of Bartnik started with a background foliation with round metrics
on the 2-sphere leaves. This has been generalized by several authors
\cite{ST02,SW04,Smi09} under various assumptions on the background
foliation. In this article, we consider background foliations given
by conformal round metrics, and by the Ricci flow on 2-spheres. We
discuss conditions on the scalar curvature function and on the foliation
that guarantee the solvability of the parabolic equation, and thus
the existence of asymptotically flat 3-metrics with a prescribed inner
boundary. In particular, many examples of asymptotically flat-scalar
flat 3-metrics with outermost minimal surfaces are obtained.
\end{abstract}


\section{Introduction}
\label{intro}
Einstein's field equation of a space-time $\left(V,\gamma\right)$
is \[R_{ab}^{V}-\frac{1}{2}R^{V}\gamma_{ab}=8\pi T_{ab,}\quad a,b=0,1,2,3,\]
where $T_{ab}$ is the space-time energy momentum tensor. The equation
admits Cauchy data formulation and initial data cannot be chosen arbitrarily.
Let $\left(V,\gamma\right)$ be a solution and consider a space-like
hypersurface $\left(N,g\right)$. From the Gauss and Codazzi equations
the scalar curvature $\bar{R}$ and the second fundamental form $k$
of $\left(N,g\right)$ will satisfy the following constraint equations \cite{Wal84}:
\begin{eqnarray*}
\bar{R}+\left(tr_{g}k\right)^{2}-|k|_{g}^{2} & = & 16\pi T_{00},\\
\nabla_{j}\left(k_{ij}-trkg_{ij}\right) & = & 8\pi T_{0i}\quad i,j=1,2,3,
\end{eqnarray*}
where $e_{0}$ is the future time-like unit normal vector of the
hypersurface $N.$ When $T=0$, these equations are called the vacuum
constraint equations. There are various ways to construct solutions
of the constraint equations. In 1993 Bartnik \cite{Bar93} introduced
a new construction of 3-metrics of prescribed scalar curvature and
prescribed the inner boundary using a quasi-spherical ansatz. A manifold
$N$ is called quasi-spherical if it can be foliated by round spheres.
Under quasi-spherical ansatz, the equation for the scalar curvature
$\bar{R}$ can be written as a semilinear parabolic equation. Let $\Sigma$
be a smooth compact surface without boundary. Let $N=[a,\infty)\times\Sigma$
be equipped with a quasi-spherical metric 
\[
\bar{g}=u^{2}dt^{2}+{\displaystyle \sum_{i=1}^{2}\left(\beta_{i}dt+t\sigma_{i}\right)^{2}}
\]
for some functions $u$ and $\beta_{i},i=1,2,$ where $\sigma_{i}^{2}$
is the standard metric on the unit 2-sphere. By viewing $u$ as an
unknown function and the scalar curvature $\bar{R}$ of $\left(N,\bar{g}\right)$
and $\beta_{i}$ as prescribed fields, Bartnik \cite{Bar93} observed
that the function $u$ is given by 
\[
2t\frac{\partial u}{\partial t}-2\beta_{i}u_{|i}
=
\gamma u^{2}\Delta u+\left(1+\gamma B\right)u-\gamma\left(1-\frac{1}{2}\bar{R}t^{2}\right)u^{3},
\]
where $\cdot_{|i}=\nabla_{i}\cdot$ denotes the covariant derivative
of $d\sigma^{2}$, $\gamma=\left(1-\frac{1}{2}div\beta\right)^{-1}$,
and 
$B=\frac{1}{2}\left|div\beta\right|^{2}+\frac{1}{8}\left|\beta_{i|j}+\beta_{j|i}\right|^{2}-t\partial_{t}\left(div\beta\right)+\beta_{i}\left(div\beta\right)_{|i}-\frac{3}{2}div\beta$. 

Bartnik's parabolic method under quasi-spherical ansatz has been generalized
by several authors. In 2002, Shi and Tam \cite{ST02} used the foliation
by level sets of the distance function to a convex hypersurface in
$\mathbb{R}^{n}$. Let $\Sigma_{0}$ be a smooth compact strictly
convex hypersurface in $\mathbb{R}^{n}$, and $t$ the distance function
from $\Sigma_{0}$. The metric $\bar{g}$ on $N=[a,\infty)\times\Sigma$ is of the form 
\[
\bar{g}=u^{2}dt^{2}+g_{t},
\]
where $g_{t}$ is the induced metric on $\Sigma_{t}$, which is the
hypersurface with distance $t$ from $\Sigma_{0}.$ The function $u$
with prescribed flat-scalar curvature $\bar{R}=0$ satisfies the equation
\[
2H_{0}\frac{\partial u}{\partial t}=2u^{2}\Delta_{t}u+\left(u-u^{3}\right)R_{t},
\]
where $H_{0}$ is the mean curvature of $\Sigma_{t}$ in $\mathbb{R}^{n}$,
$R_{t}$ is the scalar curvature of $\Sigma_{t}$, and $\Delta_{t}$
is the Laplacian on $\Sigma_{t}.$ Shi and Tam showed that for a smooth
compact strictly convex hypersurface $\Sigma_{0}$ in $\mathbb{R}^{n}$,
a positive function $u$ can be arbitrarily prescribed initially. 

Weinstein and Smith studied this parabolic method under quasi-convex
foliations in \cite{SW00,SW04}. A topological 2-sphere is said to
be quasi-convex if its Gauss and mean curvature are positive. A foliation
is quasi-convex if its leaves are quasi-convex spheres. Using the
Poincar\'{e} Uniformization, $\bar{g}$ on $N=[a,\infty)\times\Sigma$ can
be written as 
\[
\bar{g}=u^{2}dt^{2}+e^{2v}g_{ij}\left(\hat{\beta}^{i}dt+td\theta^{i}\right)\left(\hat{\beta}^{j}dt+td\theta^{j}\right),
\]
where $\left(\theta^{1},\theta^{2}\right)$ are local coordinates
on a topological 2-sphere $\Sigma$ and $g_{ij}$ is a fixed round
metric of area $4\pi$. The parabolic equation for $u$ on $\Sigma$
is given by
\[
t\frac{\partial u}{\partial t}-\beta\cdot\nabla u=\Gamma u^{2}\Delta u+Au-Bu^{3},
\]
where $\beta=e^{2v}\hat{\beta}^{i}\partial_{i}$, and $\Gamma$, $A$, and
$B$ are functions that can be calculated in terms of only $\beta,v,$
and $t.$ Weinstein and Smith derived conditions on the source functions
$\beta,\Gamma,A$, and $B$ from above that guarantee the existence
of a global positive solution on the interval $[a,\infty)$. However,
sometimes the decay conditions may not be verified. 

For initial data with an apparent horizon, the parabolicity of the
parabolic equation breaks down on the horizon. To overcome this, Smith
\cite{Smi09} considered the metrics of the form 
\[
\bar{g}=\frac{u^{2}}{1-\frac{a}{t}}dt^{2}+t^{2}g(t),
\]
on $N=[a,\infty)\times\Sigma$, where $\left(\Sigma,h\right)$ is a given Riemannian surface, $g(t)$
satisfies 
$$a^{2}g(a)=h, \quad {\displaystyle \frac{\partial g_{ij}}{\partial t}g^{ij}>-4},$$ 
$$g(t)=g(a)  \mbox{ for } t\in[a,a+\epsilon),$$ 
$$g(t)=\sigma  \mbox{ for $t$ large},$$
where $\sigma$ is the standard metric on $S^2,$
and the scalar curvature $R(t)$ of $g(t)$ is positive. The second condition,
$g(t)=g(a) \mbox{ for } t\in[a,a+\epsilon)$, allows the separation of variables
so that solving the parabolic scalar curvature equation reduces to
solving the elliptic equation
\[
\Delta_{g}u-\frac{R}{2}u+\frac{1}{u}=0
\]
on the region $[a,a+\epsilon]\times\Sigma,$ which is referred as
the collar region. He then obtained asymptotically flat time symmetric
initial data on $N$ by constructing the metric on the collar region,
and the metric exterior to the collar region using the parabolic method. 

This parabolic construction also provides an insight into the extension
problem, which is suggested by the definition of quasi-local mass
\cite{Bar89}. Here one hopes to extend a bounded Riemannian 3-manifold
$\left(\Omega,g_{0}\right)$ to an asymptotically flat 3-manifold
$\left(M,g\right)$ with nonnegative scalar curvature containing $\left(\Omega,g_{0}\right)$
isometrically. The condition that the scalar curvature can be defined
distributionally and bounded across $\partial(M\setminus\Omega)$
leads to the geometric boundary conditions
\[
g|_{\partial(M\setminus\Omega)}=g_{0}|_{\partial\Omega},\quad H_{\partial(M\setminus\Omega),g}=H_{\partial\Omega,g_{0}}
\]
where the metrics and the mean curvatures in $\left(\Omega,g_{0}\right)$
and $\left(M,g\right)$ match along the boundary $\partial(M\setminus\Omega)$.
Note that this extension is not possible with the traditional conformal
method \cite{CBY,SY79}. Specifying both the boundary metric and the
mean curvature leads to simultaneous Dirichlet and Neumann boundary
conditions, which are ill-posed for the elliptic equation of the conformal
factor.

In this paper, we let $e^{2f}\sigma$ be a family of metrics on
a topological sphere $\Sigma$ where $\sigma$ is the standard metric on the sphere.  Let $N=[1,\infty)\times\Sigma$, $\bar{R}$
be a given function on $N$, and 
$$\bar{g}=u^{2}dt^{2}+t^{2}e^{2f}\sigma$$
be a metric on $N$ with scalar curvature $\bar{R}$. The metric $\bar{g}$ has the
scalar curvature $\bar{R}$ if and only if $u$ satisfies the parabolic equation
\begin{eqnarray}
\left(\frac{1}{t}+\frac{\partial f}{\partial t}\right)\frac{\partial u}{\partial t} 
& = & \frac{1}{2t^{2}}u^{2}\Delta_{f}u+\left(\frac{\partial}{\partial t}\left(\frac{1}{t}+\frac{\partial f}{\partial t}\right)+\frac{3}{2}\left(\frac{1}{t}+\frac{\partial f}{\partial t}\right)^{2}\right)u\label{eq:u}\\
&  & -\frac{1}{4t^{2}}\left(R_{f}-t^{2}\bar{R}\right)u^{3},\nonumber 
\end{eqnarray}
where $R_f$ and $\Delta_f$ denote the scalar curvature and the Laplacian with respect to $e^{2f}\sigma$, respectively.
 
First, we study decay conditions of the foliation and prescribed scalar
curvature which ensure the solution $u$ gives an asymptotically flat
metric with prescribed scalar curvature (Theorem \ref{thm:M1}). Second,
with suitable decay conditions we show that there exists a solution
$u^{-1}\in C^{2+\alpha}\left(N\right)$ so that the metric $\bar{g}$
has outermost totally geodesic boundary. Instead of assuming a collar
region, we use the dilation invariance of weighted H\"{o}lder norms together
with suitable curvature conditions to obtain uniform bounds of solutions
to the initial value problem (\ref{eq:u}) with initial condition
$u^{-1}\left(1+\epsilon,\cdot\right)$ on $[1+\epsilon,\infty)$.
By Arzela-Ascoli Theorem, there exists a weak solution to (\ref{eq:u})
with $u^{-1}\left(1,\cdot\right)=0$ (Theorem \ref{thm:M2}). Since
the mean curvature ${\displaystyle H=\frac{2}{u}\left(\frac{1}{t}+\frac{\partial f}{\partial t}\right)}$
of $\Sigma_{t}=\{ t \} \times \Sigma$ stays positive, by the maximum principle the boundary
surface is the outermost minimal surface. Theorem \ref{thm:M1} and
Theorem \ref{thm:M2} together give an initial data set of prescribed
geometry with a horizon. Last, we study existence results under Ricci
flow foliation. It is known by the work of R. Hamilton \cite{Ham88}
and B. Chow \cite{Cho91} that the evolution under Ricci flow of an
arbitrary initial metric on a topological 2-sphere, suitably normalized,
exists for all time and converges exponentially to a round metric.
Given a compact Riemannian surface $\left(\Sigma,g_{1}\right)$, let
\[
\bar{g}=u^{2}dt^{2}+t^{2}g(t)\]
be  a metric on $N=[1,\infty) \times \Sigma$ where $g(t)$ evolves by the Ricci flow defined in (\ref{eq:RF}).
Using the fast convergence property, we have corresponding existence results
(Theorem \ref{thm:M3} and Theorem \ref{thm:M4}) under Ricci flow
foliation. Since $\Sigma_{t}$ are nearly round, the ADM mass of the
asymptotically flat manifold $N$ is approached by the Hawking mass
$m_{H}\left(\Sigma_{t}\right)$ as $t\rightarrow\infty$ (see \cite{SWW09}).
If in addition the prescribed scalar  curvature $\bar{R}$  is nonnegative, by
the equation (\ref{eq:RFw}) of $w=u^{-2}$, a direct computation shows
that 
\[
\frac{d}{dt}m_{H}(\Sigma_{t})=\frac{1}{8\pi}\int w|\nabla u|^{2}+\frac{t^{2}}{2}|M_{ij}|^{2}w+\frac{t^{2}}{2}\bar{R}d\sigma\geq0,
\]
where $M_{ij}$ is the trace-free part of the Ricci potential. We
obtain an interesting byproduct, the Hawking mass $m_{H}(\Sigma_{t})$
is nondecreasing in $t$. If we impose the initial condition $u(1,\cdot)^{-1}=0$,
i.e., minimal boundary surface, and assume that $\bar{R}\geq0$, then
ADM mass is bounded below by\[
{\displaystyle m_{ADM}(N)\geq\frac{1}{2}\sqrt{\frac{A(\Sigma)}{4\pi}}=\frac{1}{2}.}\]
In particular, if we start with the standard metric $(\Sigma,\sigma)$
and prescribe flat scalar curvature $\bar{R}\equiv0$, then the metric
$\bar{g}$ obtained from above would be exactly a Schwarzschild metric
with ADM mass ${\displaystyle m_{ADM}=\frac{1}{2}}$ (Corollary \ref{cor:schwarzchild}).

Let $A_{I}=I\times\Sigma$
for any interval $I\subset[1,\infty)$. For compact intervals $I\subset[1,\infty),$
the parabolic H\"{o}lder space $C^{k+\alpha}(A_{I})$ is the Banach space
of continuous functions on $A_{I}$ with finite weighted $\left\| \cdot\right\| _{k+\alpha,I}$
norm, and for $I$ noncompact, $C^{k+\alpha}(A_{I})$ is defined as
the space of continuous functions which are norm-bounded on compact
subsets of $I.$ $C^{k,\alpha}(\Sigma)$ is the H\"{o}lder space on $\Sigma$
with norm $\left\| \cdot\right\| _{k,\alpha}.$ For any $\xi\in C^{0}(A_{I})$,
we define functions $\xi_{*}(t)$ and $\xi^{*}(t)$ by
\[
\xi_{*}(t)=\inf\{\xi(t,x):x\in\Sigma\},\quad\xi^{*}(t)=\sup\{\xi(t,x):x\in\Sigma\}.
\]
Our main theorems in this paper are the following:


\begin{thm} \label{thm:M1}
Assume $\bar{R}\in C^{\alpha}(N)$ and $f\in C^{4+\alpha}(N)$ such that 
$$   
    0 < 1+t\frac{\partial f}{\partial t} <\infty \mbox{ for all } 1\leq t\leq \infty ,
$$
$$
{\displaystyle \left(t\frac{\partial f}{\partial t}\right)^{*}\in L^{1}\left([1,\infty)\right)\,},
$$
$$
t\left(\frac{\partial}{\partial t}\ln\left(\frac{1}{t}+\frac{\partial f}{\partial t}\right)\right)^{*}-t\frac{d}{dt}\ln\left(\frac{1}{t}+\frac{\partial f}{\partial t}\right)^{*}\in L^{1}\left([1,\infty)\right),
$$
and 
$$
\int_{1}^{\infty}|R_{f}-2|^{*}+|\bar{R}t^{2}|^{*}dt<\infty.$$
Suppose there exists a constant  $C>0$ such that for all $t\geq2$ and $I_{t}=[t/2,2t]$, 
$$
{\displaystyle \left\| \bar{R}t^{2}\right\| _{\alpha,I_{t}}+  \left\| t\frac{\partial f}{\partial t}\right\| _{2+\alpha,I_{t}}  + \left\|1-e^{-2f}\right\|_{2+\alpha,I_{t}}  \leq\frac{C}{t} }.
$$
Further assume the  nonnegative constant $K$ defined by
 \[
   K=\sup_{1\leq t<\infty}
               \left\{ 
                        -\int_{1}^{t}
                                \frac{1}{2t^{2}}
                                \left(\frac{R_{f}-t^{2}\bar{R}}{\frac{1}{\tau}+\frac{\partial f}{\partial\tau}}\right)_*
                                e^{\int_{1}^{\tau}\left(2\frac{\partial}{\partial s}\ln\left(\frac{1}{s}+\frac{\partial f}{\partial s}\right)+3\left(\frac{1}{s}+\frac{\partial f}{\partial s}\right)\right)^*ds}
                         d\tau
\right\} 
\]
satisfies 
\[
K<\infty.
\]
Then for every $\varphi(x)\in C^{2,\alpha}\left(\Sigma\right)$
satisfying 
\[
0<\varphi(x)<\frac{1}{\sqrt{K}}\quad\hbox{for\, all\,}\, x\in\Sigma,
\]
there is a unique positive solution $u\in C^{2+\alpha}\left(N\right)$
of (\ref{eq:u}) with the initial condition 
\begin{equation}
u(1,\cdot)=\varphi(\cdot)\label{eq:initial}
\end{equation}
such that the metric $\bar{g}=u^{2}dt^{2}+t^{2}e^{2f}\sigma$ on
$N$ satisfies the asymptotically flat condition 
\begin{equation}
\left|\bar{g}_{ab}-\delta_{ab}\right|+t\left|\partial_{a}\bar{g}_{bc}\right|<\frac{C}{t},\quad a,b,c=1,2,3\label{AF condition}
\end{equation}
with ADM mass of $\left(N,\bar{g}\right)$ that can be expressed as
         \begin{eqnarray*}
                      m_{ADM} = \frac{1}{4\pi}\lim_{t\rightarrow\infty}\oint_{S_{t}}\frac{t}{2}\left(1-u^{-2}\right)d\sigma.
          \end{eqnarray*}
Moreover the Riemannian curvature $\bar{Rm}$ of the 3-metric
$\bar{g}$ on $N$ is H\"{o}lder continuous and decays as ${\displaystyle \left|\bar{Rm}\right|<\frac{C}{t^{3}}}$.
\end{thm}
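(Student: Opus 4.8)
The plan is to follow the parabolic strategy of Bartnik \cite{Bar93} and Shi--Tam \cite{ST02}: solve (\ref{eq:u}) locally in time, propagate a priori $C^{0}$ bounds to obtain a global solution, and then bootstrap with dilation--invariant Schauder estimates to read off the asymptotic behaviour and the mass. The first move is the substitution $w=u^{-2}$, which turns the Bernoulli--type nonlinearity of (\ref{eq:u}) into a linear one; a direct computation gives
\[
\Bigl(\tfrac1t+\tfrac{\partial f}{\partial t}\Bigr)\frac{\partial w}{\partial t}
=\frac{1}{2t^{2}w}\Delta_{f}w-\frac{3}{4t^{2}w^{2}}\bigl|\nabla_{f}w\bigr|^{2}
-p\,w+\frac{1}{2t^{2}}\bigl(R_{f}-t^{2}\bar R\bigr),
\]
with $p=2\,\partial_{t}(\tfrac1t+\tfrac{\partial f}{\partial t})+3(\tfrac1t+\tfrac{\partial f}{\partial t})^{2}$; this is uniformly parabolic on $\{0<w<\infty\}$ because $\tfrac1t+\partial_{t}f=\tfrac1t(1+t\partial_{t}f)$ is positive and comparable to $\tfrac1t$ by hypothesis, and it has the flat background solution $w\equiv1$ when $f\equiv0$, $\bar R\equiv0$. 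For positive $\varphi\in C^{2,\alpha}(\Sigma)$ the linearization of (\ref{eq:u}) about $\varphi$ is uniformly parabolic, so quasilinear parabolic Schauder theory as in \cite{Bar93} yields a unique $u\in C^{2+\alpha}(A_{[1,1+\delta]})$ with $u>0$ solving (\ref{eq:u}), (\ref{eq:initial}) on a maximal interval $[1,T^{*})$; the usual continuation criterion says that $T^{*}<\infty$ forces $\inf u\to0$, $\sup u\to\infty$, or blow--up of higher norms, the last being impossible once uniform parabolicity and $C^{\alpha}$ coefficient bounds are in hand.

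The heart of the argument is the global two--sided bound on $w$. Dividing the $w$--equation by $\tfrac1t+\partial_{t}f$ rewrites its zeroth--order and source terms as $-\tilde p\,w+\tilde q$, where $\tilde p=2\,\partial_{t}\ln(\tfrac1t+\partial_{t}f)+3(\tfrac1t+\partial_{t}f)$ and $\tilde q=\tfrac{1}{2t^{2}}(R_{f}-t^{2}\bar R)/(\tfrac1t+\partial_{t}f)$ --- precisely the quantities entering $K$. At a spatial minimum of $w(t,\cdot)$ one has $\nabla_{f}w=0$ and $\Delta_{f}w\ge0$, so $w_{*}(t)$ dominates the solution of the linear comparison ODE $\psi'=-\tilde p^{*}\psi+\tilde q_{*}$; integrating against $\mu(t)=\exp\bigl(\int_{1}^{t}(2\partial_{s}\ln(\tfrac1s+\partial_{s}f)+3(\tfrac1s+\partial_{s}f))^{*}ds\bigr)$, the exponential appearing in the definition of $K$, gives $w_{*}(t)\ge\mu(t)^{-1}\bigl((\max_{\Sigma}\varphi)^{-2}-K\bigr)$; since $0<\varphi<1/\sqrt K$ forces $(\max_{\Sigma}\varphi)^{-2}>K$, the solution stays strictly positive, and --- using in addition the $L^{1}$ and dyadic decay hypotheses, which make $\tilde q-\tfrac1t$ and $\tilde p-\tfrac3t$ integrable --- one finds $w_{*}(t)\to1$ at rate $1/t$, so $u$ is bounded above. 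Symmetrically, at a spatial maximum $\Delta_{f}w\le0$ produces an upper comparison ODE, and the hypotheses $|R_{f}-2|^{*}+|\bar R t^{2}|^{*}\in L^{1}$ and $(t\partial_{t}f)^{*}\in L^{1}$ keep $w^{*}(t)$ finite (indeed $\to1$ at rate $1/t$), so $u$ is bounded below away from $0$. With $u$ and $1/u$ uniformly bounded, (\ref{eq:u}) is uniformly parabolic with $C^{\alpha}$--bounded coefficients on compact time intervals, interior Schauder estimates give uniform $C^{2+\alpha}$ bounds, hence $T^{*}=\infty$; uniqueness follows because the difference of two solutions solves a linear parabolic equation with zero initial data.

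I expect this comparison step to be the main obstacle. The delicate point is that the comparison ODEs involve spatial extrema of the coefficients while the PDE sees their pointwise values; the third hypothesis, $t(\partial_{t}\ln(\tfrac1t+\partial_{t}f))^{*}-t\tfrac{d}{dt}\ln(\tfrac1t+\partial_{t}f)^{*}\in L^{1}$, is exactly the integrable bound on ``differentiating the supremum versus the supremum of the derivative'' that is needed to pass between them, and the positivity hypothesis $0<1+t\partial_{t}f<\infty$ prevents the principal coefficient from degenerating along the way.

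For the asymptotics, rescale the dyadic slab $A_{I_{t}}$, $I_{t}=[t/2,2t]$, to unit size and apply interior parabolic Schauder estimates; the last hypothesis $\|\bar R t^{2}\|_{\alpha,I_{t}}+\|t\partial_{t}f\|_{2+\alpha,I_{t}}+\|1-e^{-2f}\|_{2+\alpha,I_{t}}\le C/t$ bounds all coefficients of the rescaled equation, and comparison with the background $w\equiv1$ via the refined ODE above yields the dilation--invariant estimate $|w-1|+t|\partial w|+t^{2}|\partial^{2}w|\le C/t$ on each $A_{I_{t}}$, hence the same for $u=w^{-1/2}$. Writing $\bar g=u^{2}dt^{2}+t^{2}e^{2f}\sigma$ in Cartesian coordinates $x^{a}=t\,\omega^{a}(\theta)$ adapted to the flat metric $dt^{2}+t^{2}\sigma$, the perturbation $\bar g-\delta$ has components $(u^{2}-1)\,\tfrac{x_{a}x_{b}}{t^{2}}+t^{2}(e^{2f}-1)\sigma_{ab}=O(1/t)$ and one derivative costs a further $1/t$, which gives (\ref{AF condition}); substituting the $C^{2+\alpha}$ bounds (and $f\in C^{4+\alpha}$ with the stated decay) into the Riemann tensor of $\bar g$ gives $|\bar{Rm}|<C/t^{3}$, H\"older continuous. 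Finally, for the ADM mass, the rescaled leaves $\Sigma_{t}$ are round to order $1/t$, so $m_{ADM}=\lim_{t\to\infty}m_{H}(\Sigma_{t})$ (cf.\ \cite{SWW09}); computing the Hawking mass from $H=\tfrac{2}{u}(\tfrac1t+\partial_{t}f)$ and $|\Sigma_{t}|=\int_{\Sigma}t^{2}e^{2f}\,d\sigma$ and letting $f\to0$ yields $m_{ADM}=\tfrac{1}{4\pi}\lim_{t\to\infty}\oint_{S_{t}}\tfrac{t}{2}(1-u^{-2})\,d\sigma$.
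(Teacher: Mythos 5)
Your proposal follows the same route as the paper: the substitution $w=u^{-2}$, the parabolic maximum principle at spatial extrema producing comparison ODEs whose integrating factor is exactly the exponential appearing in $K$ (so that $0<\varphi<1/\sqrt K$ keeps $w_*>0$), an open/closed continuation argument for global existence, and dyadic rescaling plus interior estimates to get the $C/t$ decay of $m=\frac t2(1-u^{-2})$ and hence asymptotic flatness. Two steps, however, are genuinely missing as written. First, before you can ``apply interior Schauder estimates'' you need the principal coefficient $\frac{1}{2t^2}u^2e^{-2f}\sigma^{ij}$ to be H\"older continuous, and at that stage you only know $0<\delta_0\le u^{-2}\le\delta_0^{-1}$: a two-sided bound on $u$ makes the equation uniformly parabolic with \emph{bounded measurable} leading coefficients, not $C^\alpha$ ones, so Schauder theory does not yet apply and your continuation criterion begs the question. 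The paper bridges this by rewriting the equation in divergence form $\partial_s u=\partial_i a^i(x,t,u,\partial u)-a(x,t,u,\partial u)$ with the structure conditions $a^ip_i\ge C|p|^2$, $|a^i|\le C'|p|$, $|a|\le C''(1+|p|^2)$, and invoking the De Giorgi--Nash--Moser type result of \cite[Theorem V.1.1]{LSU68} to obtain an intermediate $C^{\alpha'}$ bound on $u$ itself; only then do the Schauder estimates of Proposition \ref{prop:interior est} close.

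Second, the mass formula requires the limit $\lim_{t\to\infty}\oint_{S_t}m\,d\sigma$ to \emph{exist}, and your estimates only give $|m|\le C$: the two-sided bound $1-C/t\le u^{-2}\le 1+C/t$ bounds $m$ but does not make the spherical average convergent. The paper proves convergence by integrating the evolution equation (\ref{eq:m}) for $m$ over $\Sigma$ and checking from the decay hypotheses that $\oint(1+t\partial_tf)\,\partial_tm\,d\sigma$ is integrable in $t$; it also identifies the limit with $m_{ADM}$ by computing the ADM flux integral directly in spherical coordinates and showing the correction terms $\frac t2(e^{2f}-1)u^{-2}$ and $t^2e^{2f}\partial_tf\,u^{-2}$ vanish, rather than by appealing to the Hawking-mass convergence of \cite{SWW09}, which the paper uses only for the nearly round Ricci-flow leaves of Theorem \ref{thm:M3}. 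Both gaps are repairable with standard tools, but as written the Schauder step and the existence of the mass limit are asserted rather than proved.
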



\begin{thm}\label{thm:M2}
Let $t\frac{\partial f}{\partial t}  \in C^{2+\alpha}\left(N\right)$ and $\bar{R} \in  C^{\alpha}\left(N\right)$ be given such that 
$$
                 \delta_{*}(t) 
                           =
                                    \int_{1}^{t}   \frac{1}{2\tau^{2}}
                                              \left(
                                                        \frac{R_f-\tau^2\bar{R}}{\frac{1}{\tau}+\frac{\partial f}{\partial\tau}}
                                               \right)_{*}
                                    e^{-\int_{\tau}^{t}
                                               \left(
                                                         2\frac{\partial}{\partial s} \ln
                                                                     \left(
                                                                               \frac{1}{s}+\frac{\partial f}{\partial s}
                                                                      \right)
                                                          +3
                                                                      \left(
                                                                                   \frac{1}{s}+\frac{\partial f}{\partial s}
                                                                      \right)
                                                \right)^{*}ds}
                                       d\tau,
$$
and
$$
\delta^{*}(t) = \int_{1}^{t}\frac{1}{2\tau^{2}}\left(\frac{R_{f}-\tau^{2}\bar{R}}{\frac{1}{\tau}+\frac{\partial f}{\partial\tau}}\right)^{*}e^{-\int_{\tau}^{t}\left(2\frac{\partial}{\partial s}\ln\left(\frac{1}{s}+\frac{\partial f}{\partial s}\right)+3\left(\frac{1}{s}+\frac{\partial f}{\partial s}\right)\right)_{*}ds}d\tau
$$
are finite on $[1,\infty)$. Further suppose that for all $1\leq t<\infty$, 
\[
0<1+t\frac{\partial f}{\partial t} <\infty,
\]
and 
\[
t^{2}\bar{R}<R_{f}.
\]
Then there is $u^{-1}\in C^{2+\alpha}\left(A_{(1,\infty)}\right)$
such that the metric $\bar{g}$ on $N$ has curvature uniformly bounded
on $A_{[1,2]}$ with totally geodesic boundary. 

Let $0<\eta<1$ be
such that 
\begin{equation}
1-\eta<R_{f}-t^{2}\bar{R}|_{t=1}<\left(1-\eta\right)^{-1}.\label{bdy est}
\end{equation}
Then there is $t_{0}>1$ such that $1<t<t_{0},$
\[
\frac{t-1}{t}\left(1-\eta\right)<u^{-2}\left(t\right)<\frac{t-1}{t}\left(1-\eta\right)^{-1},
\]
which gives 
\[
1-\frac{\eta}{1-\eta}\left(t-1\right)\leq2m\leq1+\eta\left(t-1\right).
\]
\end{thm}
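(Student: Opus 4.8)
The plan is to treat Theorem~\ref{thm:M2} as a limiting case of Theorem~\ref{thm:M1}, obtaining the $t=1$ data $u^{-1}(1,\cdot)=0$ by letting the initial surface slide to the horizon. Concretely, for each small $\epsilon>0$ I would apply Theorem~\ref{thm:M1} on $A_{[1+\epsilon,\infty)}$ — after a rescaling $t\mapsto t/(1+\epsilon)$ that moves the inner boundary back to $t=1$ — with initial data $\varphi_\epsilon(x)$ a positive function to be chosen. The hypotheses of Theorem~\ref{thm:M2} ($\delta_*,\delta^*$ finite, $0<1+t\partial_t f<\infty$, and the strict inequality $t^2\bar R<R_f$) are exactly what is needed to keep $K<\infty$ and the decay conditions valid uniformly in $\epsilon$; the strictness $t^2\bar R<R_f$ makes the cubic coefficient $R_f-t^2\bar R$ in \eqref{eq:u} positive, which is the source of the decay of $u^{-1}$ toward $0$ at the inner end. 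This produces a family of solutions $u_\epsilon$ on $A_{[1+\epsilon,\infty)}$, or equivalently $w_\epsilon=u_\epsilon^{-2}$ solving the corresponding equation.

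The key step is then a \emph{uniform-in-$\epsilon$ estimate} on a fixed collar, say $A_{[1,2]}$, strong enough to pass to the limit. Here I would exploit the dilation invariance of the weighted parabolic Hölder norms mentioned in the introduction: the curvature bound $\|\bar Rt^2\|_{\alpha,I_t}+\|t\partial_t f\|_{2+\alpha,I_t}+\|1-e^{-2f}\|_{2+\alpha,I_t}\le C/t$ together with the structure of \eqref{eq:u} gives interior Schauder estimates on each $I_t=[t/2,2t]$ that are scale-invariant, hence a bound on $\|w_\epsilon\|_{2+\alpha}$ on compact subsets of $(1,\infty)$ independent of $\epsilon$. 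Comparison (maximum principle) with the ODE obtained by dropping $\Delta_f$, i.e. comparing $w_\epsilon$ against the barriers built from $\delta_*,\delta^*$, pins $w_\epsilon(t)$ between two positive multiples of $(t-1)/t$ near $t=1$; in particular $w_\epsilon\to 0$ as $t\to 1^+$ uniformly, which both forces the correct boundary behavior and keeps the mean curvature $H=2u^{-1}(\tfrac1t+\partial_t f)$ positive, so there is no loss of parabolicity away from the horizon. By Arzelà–Ascoli one extracts a subsequential limit $w=u^{-2}\in C^{2+\alpha}(A_{(1,\infty)})$ solving \eqref{eq:u}, with $w(1,\cdot)=0$, i.e. $u^{-1}(1,\cdot)=0$; the uniform $C^{2+\alpha}$ bounds on $A_{[1,2]}$ pass to the limit and give the stated curvature bound there, and $w(1,\cdot)=0$ with $\partial_t$-control gives the totally geodesic (equivalently minimal, since $\Sigma_1$ is outermost) boundary.

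For the quantitative boundary estimates, choose $\eta$ with $1-\eta<(R_f-t^2\bar R)|_{t=1}<(1-\eta)^{-1}$ as in \eqref{bdy est}. Near $t=1$ the dominant balance in the equation for $w$ is $\bigl(\tfrac1t+\partial_t f\bigr)\partial_t w \approx \tfrac{1}{2t^2}(R_f-t^2\bar R) - 3\bigl(\tfrac1t+\partial_t f\bigr)^2 w + \cdots$, and since $\tfrac1t+\partial_t f\to 1$ and $R_f-t^2\bar R\to(R_f-t^2\bar R)|_{t=1}$ as $t\to1$, for $t$ close to $1$ the right side is squeezed between $\tfrac12(1-\eta)-3w+o(1)$ and $\tfrac12(1-\eta)^{-1}-3w+o(1)$ uniformly. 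Integrating the corresponding linear ODEs from the $w=0$ data at $t=1$, and choosing $t_0>1$ small enough that the error terms and the $u^2\Delta_f u$ term (which the uniform Schauder bound controls) are dominated, the maximum principle yields
\[
\frac{t-1}{t}(1-\eta)<w(t)<\frac{t-1}{t}(1-\eta)^{-1},\qquad 1<t<t_0.
\]
Translating via $2m=t(1-w)$ at the leaf $\Sigma_t$ (the integrand of the Hawking/ADM mass formula) gives the stated two-sided bound $1-\tfrac{\eta}{1-\eta}(t-1)\le 2m\le 1+\eta(t-1)$. The main obstacle I anticipate is making the $\epsilon$-uniformity genuinely rigorous: one must verify that $K$, and the three decay conditions of Theorem~\ref{thm:M1}, hold with constants independent of $\epsilon$ after the rescaling — this is where the finiteness of $\delta_*,\delta^*$ and the dilation invariance of the norms do the real work — and that the limit solution does not degenerate (i.e. $w$ stays strictly positive on $(1,\infty)$ and $H>0$), so that \eqref{eq:u} remains uniformly parabolic on every compact subset of $(1,\infty)$ and the extracted limit is a classical $C^{2+\alpha}$ solution rather than merely a weak one.
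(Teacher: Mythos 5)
Your overall strategy---solve with positive data on $[1+\epsilon,\infty)$, get $\epsilon$-uniform bounds, and extract a limit by Arzel\`a--Ascoli---is the same as the paper's, and your maximum-principle comparison with the barriers $\delta_*,\delta^*$ (which vanish at $t=1$) correctly produces the $C^0$ behavior $w\sim (t-1)/t$ and hence the minimal boundary. But there are two genuine problems. First, you invoke Theorem \ref{thm:M1}, whose hypotheses (the $L^1$ conditions on $t\partial_t f$, the global decay $\|\bar Rt^2\|_{\alpha,I_t}+\|t\partial_t f\|_{2+\alpha,I_t}+\|1-e^{-2f}\|_{2+\alpha,I_t}\le C/t$, and the integrability of $|R_f-2|^*+|\bar Rt^2|^*$) are \emph{not} assumed in Theorem \ref{thm:M2}; your claim that the hypotheses of Theorem \ref{thm:M2} keep ``the decay conditions valid uniformly in $\epsilon$'' is false because those conditions are simply not available. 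The correct tool is the bare existence result (Theorem \ref{thm:existence}), whose only structural requirement is $K<\infty$; under $t^2\bar R<R_f$ and $0<1+t\partial_t f$ one gets $K=0$, so \emph{every} positive initial datum launches a global solution. This is fixable, but as written your first step rests on unavailable hypotheses.

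The more serious gap is the degeneracy at $t\to1^+$. Your uniform Schauder bounds are only on compact subsets of the \emph{open} interval $(1,\infty)$: since $u=w^{-1/2}\to\infty$ at the horizon, the $\frac{1}{2t^2w}\Delta_f w$ term (equivalently $u^2\Delta_f u$) blows up there, and your assertion that ``the uniform $C^{2+\alpha}$ bounds on $A_{[1,2]}$ pass to the limit and give the stated curvature bound'' begs the question---no such uniform bound up to $t=1$ has been established, and your ``dominant balance'' near $t=1$ controls $u^2\Delta_f u$ by the very Schauder bound you are trying to prove. What the conclusion ``curvature uniformly bounded on $A_{[1,2]}$'' actually requires is the quantitative decay $|\nabla m|+|\nabla^2 m|\le C(t-1)$, and the paper obtains it precisely through the desingularizing change of variables $\tilde u(t)=\sqrt{t/(t+1)}\,u(t+1)$: the factor $\sqrt{t/(t+1)}$ absorbs the vanishing of $u^{-1}$, so $\tilde u^{-2}$ is pinched between $1-\eta$ and $(1-\eta)^{-1}$ near $t=0$, the $\tilde u$-equation is uniformly parabolic there, and scale-invariant Schauder estimates on intervals $[t_0,4t_0]$ with $t_0\to0$ deliver the $(t-1)$-decay of the derivatives of $m$. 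Your proposal contains no substitute for this device, so it does not rule out that $\nabla^2 m$, and hence the curvature of $\bar g$, blows up as $t\to1^+$.
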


The modified Ricci flow is defined by the geometric evolution equation
\begin{equation}
     \left\{
             \begin{array}{cc}
                 \frac{\partial}{\partial t}g_{ij}=\left(r-R\right)g_{ij}+2D_{i}D_{j}\mathcal{F}=2M_{ij}\\
                     g\left(1,\cdot\right)=g_{1}\left(\cdot\right),
             \end{array}
     \right.\label{eq:RF}
\end{equation}
where $R$ is the scalar curvature, $r=\int Rd\mu/\int1d\mu$ is the
mean scalar curvature, and the Ricci potential $ \mathcal{F}$ is a solution of the
equation $\Delta \mathcal{F}=R-r$ with mean value zero. The equation of $ \mathcal{F}$
satisfies \[
\frac{\partial  \mathcal{F}}{\partial t}=\Delta  \mathcal{F}+r \mathcal{F}-\int|D \mathcal{F}|^{2}d\mu/\int1d\mu.\]
$M_{ij}=\left(r-R\right)g_{ij}/2+D_{i}D_{j} \mathcal{F}$ is the trace-free part
of $\mathrm{Hess}\left( \mathcal{F}\right)$. The solution under the modified Ricci flow
of an arbitrary initial metric on a topological 2-sphere $\Sigma$
exists for all time and converges exponentially to the round metric,
and $|M|_{ij}\rightarrow0$ exponentially \cite{Cho91,Ham88}. Moreover
if $R\geq0$ at the start, it remains so for all time. The modified
Ricci flow also preserves area. For convenience we normalize the area
so that $A(\Sigma)=4\pi$. By the Gauss-Bonnet formula the mean scalar
curvature $r=\int Rd\mu/\int1d\mu=2$. The solution to the modified Ricci flow provides a canonical
foliation on $N=[1,\infty)\times\Sigma$. 

Let  $\bar{g}=u^{2}dt^{2}+t^{2}g(t)$ be a $3$-metric on $N=[1,\infty)\times\Sigma$ 
and  $g(t)$ be the solution of the Ricci flow defined by (\ref{eq:RF}). Let $R$ and $\bar{R}$ be the scalar curvatures of $g(t)$ and $\bar{g}$, respectively.
Theorems \ref{thm:M3} and Theorem \ref{thm:M4} are existence results
analogue to Theorems \ref{thm:M1} and \ref{thm:M2}, but under Ricci
flow ansatz.


\begin{thm}\label{thm:M3}
Assume that $\bar{R}\in C^{\alpha}(N)$ and the constant
$K$ is defined by
\[
{\displaystyle K=\sup_{1\leq t<\infty}\left\{ -\int_{1}^{t}\left(\frac{R}{2}-\frac{\tau^{2}}{2}\bar{R}\right)_{*}\exp\left(\int_{1}^{\tau}\frac{s{|M|^{*}}^{2}}{2}ds\right)d\tau\right\} <\infty.}
\]
Suppose there is a constant $C>0$ such that for all $t\geq1$ and
$I_{t}=[t,4t]$,
\[
\left\| \bar{R}t^{2}\right\| _{\alpha,I_{t}}\leq\frac{C}{t},\quad\hbox{and}\quad\int_{1}^{\infty}|\bar{R}|^{*}t^{2}dt<\infty.
\]
Then for any function $\varphi\in C^{2,\alpha}(\Sigma)$ satisfying
\[
\displaystyle 0<\varphi<\frac{1}{\sqrt{K}},
\]
there is a unique positive solution $u\in C^{2+\alpha}(N)$ of the
parabolic equation 
\begin{equation*}
t\frac{\partial u}{\partial t}=\frac{1}{2}u^{2}\Delta u+\frac{t^{2}}{4}\left|M\right|^{2}u+\frac{1}{2}u-\frac{1}{4}\left(R-t^{2}\bar{R}\right)u^{3}
\end{equation*}
with initial condition $u(1,\cdot)=\varphi(\cdot)$ such that the
metric $\bar{g}=u^{2}dt^{2}+t^{2}g(t)$ on $N$ satisfies the asymptotically
flat condition with finite ADM mass and
\[
{\displaystyle m_{ADM}=\lim_{t\rightarrow\infty}\frac{1}{4\pi}\oint_{\Sigma_{t}}\frac{t}{2}(1-u^{-2})d\sigma.}
\]
Moreover, the Riemannian curvature $\bar{Rm}$ of the 3-metric $\bar{g}$
on $N$ is H\"{o}lder continuous and decays as ${\displaystyle |\bar{Rm}|<\frac{C}{t^{3}}}$. 
\end{thm}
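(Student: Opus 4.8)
The plan is to follow the proof of Theorem~\ref{thm:M1}, replacing the conformal round foliation by the modified Ricci flow foliation and using, in place of the decay of $f$, the \emph{exponential} convergence $g(t)\to\sigma$, $R\to2$ and $|M|\to0$ established by Hamilton~\cite{Ham88} and Chow~\cite{Cho91}. The equation for $u$ is uniformly parabolic wherever $u>0$, its principal coefficient being $u^{2}/(2t)$, so the standard short-time theory for quasilinear parabolic equations (a contraction mapping in the parabolic H\"older space $C^{2+\alpha}(A_{I})$) produces a unique positive solution $u\in C^{2+\alpha}(A_{[1,1+\epsilon)})$, which continues on a maximal interval $[1,T)$ as long as $u$ stays bounded above and bounded away from $0$ on compact subintervals. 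Uniqueness on all of $N$ then follows from the parabolic maximum principle applied to the linear equation satisfied by the difference of two solutions.

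To pass from local to global I would use scalar barriers. Since the Laplacian term is $\le0$ at a spatial maximum and $\ge0$ at a spatial minimum, the functions $m(t)=\sup_{\Sigma}u(t,\cdot)$ and $\underline m(t)=\inf_{\Sigma}u(t,\cdot)$ satisfy, in the barrier sense,
\[
t\,m'\le\tfrac{t^{2}}{4}{|M|^{*}}^{2}m+\tfrac12 m-\tfrac14\bigl(R-t^{2}\bar R\bigr)_{*}m^{3},
\qquad
t\,\underline m'\ge\tfrac12\underline m-\tfrac14\bigl(R-t^{2}\bar R\bigr)^{*}\underline m^{3}.
\]
The Bernoulli substitution $z=m^{-2}$ (resp.\ $p=\underline m^{-2}$) linearises these to first-order ODEs with integrating factors $t\exp\!\bigl(\int_{1}^{t}\tfrac s2{|M|^{*}}^{2}\,ds\bigr)$ and $t$; integrating the first and using the definition of $K$ together with $0<\varphi<1/\sqrt K$ shows $m(t)<\infty$ for all $t$, while the coercive $\tfrac12\underline m$ term (with $\int_{1}^{\infty}|\bar R|^{*}t^{2}\,dt<\infty$ and the boundedness of $R$ along the flow) keeps $\underline m(t)$ above a positive constant. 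Hence $T=\infty$ and $u\in C^{2+\alpha}(N)$.

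The asymptotically flat estimate comes from a scale-invariant Schauder bootstrap, exactly as in Theorem~\ref{thm:M1}: rescaling the equation from $I_{t}\times\Sigma=[t,4t]\times\Sigma$ to a domain of unit size, the hypothesis $\|\bar R t^{2}\|_{\alpha,I_{t}}\le C/t$ together with the exponentially small parabolic norms of $g(t)-\sigma$, $R-2$ and $|M|$ (which dominate any polynomial rate needed) bound all coefficients and the nonlinearity $\tfrac12 u^{2}\Delta u$ uniformly in $t$; interior parabolic Schauder estimates with scale-independent constants then give $\|u-1\|_{2+\alpha,I_{t}}\le C/t$. Undoing the scaling and inserting this into $\bar g=u^{2}dt^{2}+t^{2}g(t)$ yields $|\bar g_{ab}-\delta_{ab}|+t|\partial_{a}\bar g_{bc}|<C/t$. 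For the ADM mass one computes the flux integral of $\bar g$ and finds it reduces to $\tfrac1{4\pi}\lim_{t\to\infty}\oint_{\Sigma_{t}}\tfrac t2(1-u^{-2})\,d\sigma$; the limit exists because the estimates (with $\int_{1}^{\infty}|\bar R|^{*}t^{2}\,dt<\infty$ controlling the boundary contribution) show $t(u-1)$ converges in $C^{0}(\Sigma)$, so the ADM mass is the average of the limit.

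Finally, the curvature estimate is obtained by writing the Riemann tensor of $\bar g$ through the structure equations of the foliation $\{\Sigma_{t}\}$: the second fundamental form of $\Sigma_{t}$ is $\tfrac{t}{u}\bigl(g_{ij}+tM_{ij}\bigr)$ (up to sign), its mean curvature $2/(tu)$, and the intrinsic scalar curvature of $\Sigma_{t}$ is $R/t^{2}$. The leading $O(t^{-2})$ terms are precisely those of the flat model $dt^{2}+t^{2}\sigma$ and cancel in $\bar{Rm}$, leaving a remainder assembled from $u-1$ and its first two derivatives, $g-\sigma$ and $M$ --- each $O(1/t)$ or exponentially small --- which, measured in a $\bar g$-orthonormal frame, carries an extra $t^{-2}$ from the $\Sigma$-directions, giving $|\bar{Rm}|<C/t^{3}$; H\"older continuity of $\bar{Rm}$ is inherited from $u\in C^{2+\alpha}$ and the smoothness of $g(t)$. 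I expect the decay step to be the main obstacle: one must organise the scale-invariant Schauder iteration so that the quasilinear term and the reaction terms are all controlled with constants independent of $t$, and in particular pin down the $1/t$-behaviour of $1-u^{-2}$ that the ADM mass requires. The Ricci flow inputs are benign here precisely because Hamilton--Chow convergence is exponential, hence far faster than the polynomial rates needed; the genuine work is the uniform-in-scale analysis of $u$ itself.
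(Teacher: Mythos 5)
Your proposal follows essentially the same route as the paper's own proof (Lemma \ref{lem:bds} plus Theorems \ref{thm: existence} and \ref{thm:M3-2}): maximum-principle barriers for $\sup u$ and $\inf u$ that, after the Bernoulli substitution to $u^{-2}$, use exactly the integrating factor $t\exp\bigl(\int_1^t \tfrac{s}{2}{|M|^{*}}^{2}\,ds\bigr)$ and the hypothesis $\varphi<1/\sqrt{K}$ for positivity, an open--closed continuation for global existence, and the dilation-invariant Schauder estimate on $I_t=[t,4t]$ exploiting the exponential Hamilton--Chow convergence of $g(t)$, $R$, $M$ to obtain $\|m\|_{2+\alpha}\le C$, hence the $1/t$ decay and $|\bar{Rm}|\le C/t^{3}$. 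The only cosmetic difference is that you identify the ADM mass by computing the flux integral directly (as the paper does in Theorem \ref{thm:AF} for the conformal foliation), whereas here the paper passes through the Hawking mass of the nearly round leaves $\Sigma_t$; both yield the same limiting formula.
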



\begin{thm}
\label{thm:M4}
 Let $\bar{R}\in C^{\alpha}(N).$ Suppose that $\bar{R}t^{2}<R$
for $1\leq t<\infty$. Let $0<\eta<1$ be such that\begin{equation*}
1-\eta< R-\bar{R}|_{t=1}<(1-\eta)^{-1}.\end{equation*}
Then there is a solution $u^{-1}\in C^{2+\alpha}(N)$ such that the
constructed metric on $N$ has curvature uniformly bounded on $A_{[1,2]}$
with totally geodesic boundary $\Sigma$.
\end{thm}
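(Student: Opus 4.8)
The plan is to mirror the proof of Theorem~\ref{thm:M2}, with the integrability hypotheses there now automatic under the modified Ricci flow~(\ref{eq:RF}). Write $w=u^{-2}$, which satisfies the evolution equation~(\ref{eq:RFw}). For $\bar g=u^{2}dt^{2}+t^{2}g(t)$ the unit normal of $\Sigma_{t}=\{t\}\times\Sigma$ is $u^{-1}\partial_{t}$ and the second fundamental form is $k_{ij}=\tfrac{1}{2u}\partial_{t}\bigl(t^{2}g_{ij}\bigr)=\tfrac{1}{u}\bigl(t\,g_{ij}+t^{2}M_{ij}\bigr)$, so $\Sigma=\{1\}\times\Sigma$ is totally geodesic precisely when $w(1,\cdot)=0$; this is the solution I will construct. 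Two features of the Ricci-flow setting are used throughout: $|M|\to0$ exponentially, so the weights $\exp\!\bigl(\int s|M|^{2}\,ds\bigr)$ appearing in Theorem~\ref{thm:M3} are uniformly bounded; and, since $\bar R t^{2}<R$ on the compact surface $\Sigma$, the constant $K$ of Theorem~\ref{thm:M3} is $\le0$, so that theorem imposes no upper bound on admissible initial data.

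Step 1 (approximation). For $\epsilon\in(0,1)$ I solve the initial value problem for the parabolic equation of Theorem~\ref{thm:M3} on $[1+\epsilon,\infty)\times\Sigma$ with the constant datum $u_{\epsilon}^{-2}(1+\epsilon,\cdot)\equiv\tfrac{\epsilon}{1+\epsilon}$ --- the Schwarzschild value at $t=1+\epsilon$. The existence argument of Theorem~\ref{thm:M3}, applied with base time $1+\epsilon$ (the relevant constant $K$ is again $\le0$), yields a positive $u_{\epsilon}\in C^{2+\alpha}(A_{[1+\epsilon,\infty)})$.

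Step 2 (uniform estimates). Near the horizon I compare $w_{\epsilon}$ with Schwarzschild-type barriers $w_{\pm}(t)=c_{\pm}\tfrac{t-1}{t}$, the constants $c_{\pm}$ being those of the second part of Theorem~\ref{thm:M2}, fixed here by the normalization~(\ref{bdy est}). On an $x$-independent profile the formally singular first-order terms of~(\ref{eq:RFw}) vanish identically, and the $|M|^{2}$-term is multiplied by $w\to0$, hence is lower order as $t\to1^{+}$; with $\bar R t^{2}<R$ and continuity in $t$, this makes $w_{+}$ a supersolution and $w_{-}$ a subsolution on some $[1,t_{0}]$ with $t_{0}>1$ independent of $\epsilon$, between which the datum of Step~1 lies. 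The maximum principle gives
\[
\frac{t-1}{t}\,c_{-}\;\le\;u_{\epsilon}^{-2}(t,x)\;\le\;\frac{t-1}{t}\,c_{+},\qquad 1+\epsilon\le t\le t_{0},
\]
uniformly in $\epsilon$. On $[t_{0},T]$, constant-in-$x$ super/subsolutions built from $\bigl(R-t^{2}\bar R\bigr)^{*}$ and $\bigl(R-t^{2}\bar R\bigr)_{*}$ sandwich $u_{\epsilon}^{-2}$ away from $0$ and $\infty$, again uniformly in $\epsilon$; hence the equation is uniformly parabolic on every $A_{[1+\delta,T]}$, and the dilation-invariant weighted Schauder estimates of Theorem~\ref{thm:M1} and Theorem~\ref{thm:M3} give $\|u_{\epsilon}^{-1}\|_{2+\alpha,I_{t}}\le C$ there, with $C$ independent of $\epsilon$.

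Step 3 (limit, boundary, curvature). By Arzela--Ascoli and a diagonal argument a subsequence $u_{\epsilon_{k}}^{-1}$ converges in $C^{2}_{\mathrm{loc}}(A_{(1,\infty)})$ to a positive limit with $u^{-1}\in C^{2+\alpha}(A_{(1,\infty)})$ solving the parabolic equation of Theorem~\ref{thm:M3}; the sandwich of Step~2 persists, so $u^{-2}(t,\cdot)\asymp\tfrac{t-1}{t}\to0$ as $t\to1^{+}$, whence $u^{-1}$ extends continuously by $0$, $k_{ij}=u^{-1}\bigl(t\,g_{ij}+t^{2}M_{ij}\bigr)\to0$, and the boundary $\Sigma$ is totally geodesic. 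Passing to the geodesic-distance coordinate $\rho$, $d\rho=u\,dt$, one has $\rho\asymp\sqrt{t-1}$ and $\bar g=d\rho^{2}+t(\rho)^{2}g(t(\rho))$, a metric closing up smoothly across $\rho=0$ just as at the Schwarzschild horizon; with a $C^{2+\alpha}$-bound for $u^{-1}$ obtained by bootstrapping~(\ref{eq:RFw}) against the sharp barrier --- which controls $\partial_{t}w$ and $\partial_{t}^{2}w$ near $t=1$ --- this bounds $|\bar{Rm}|$ uniformly on $A_{[1,2]}$ and gives $u^{-1}\in C^{2+\alpha}(N)$ up to $t=1$. The main obstacle is precisely this uniform control near the degenerate boundary $t=1$: there $u=\infty$ and~(\ref{eq:RFw}) is genuinely singular, so standard parabolic theory is unavailable and the estimates must be extracted from the explicit barriers $c_{\pm}\tfrac{t-1}{t}$ together with the bootstrap of the $w$-equation, exactly as in the corresponding step of Theorem~\ref{thm:M2}.
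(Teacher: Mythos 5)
Your overall scheme --- approximate from $t=1+\epsilon$ with data pinned between Schwarzschild-type barriers, get $\epsilon$-independent $C^{0}$ bounds $c_{-}\tfrac{t-1}{t}\le u_{\epsilon}^{-2}\le c_{+}\tfrac{t-1}{t}$ from the maximum principle, and pass to a limit by Arzela--Ascoli --- matches the paper's strategy, and the identification of the totally geodesic boundary with $u^{-2}(1,\cdot)=0$ is correct. The $C^{0}$ part (your Steps 1--2) is essentially Lemma~\ref{lem:bds} plus the observation that $\bar Rt^{2}<R$ forces $K=0$, and is fine.

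The genuine gap is in Step 3, and it is exactly the step you yourself flag as ``the main obstacle.'' You propose to control $u^{-1}$ in $C^{2+\alpha}$ up to $t=1$ by ``bootstrapping~(\ref{eq:RFw}) against the sharp barrier,'' and you assert this works ``exactly as in the corresponding step of Theorem~\ref{thm:M2}.'' But that is not what the corresponding step of Theorem~\ref{thm:M2} (or of Theorem~\ref{thm:M4} in the paper) does: the paper's mechanism is the change of variables
\[
\tilde{u}(t)=\sqrt{\tfrac{t}{t+1}}\,u(t+1),\qquad t\in(0,\infty),
\]
which factors out the Schwarzschild degeneracy. Because $\tilde\delta_{*}(t)\ge 1-\eta>0$ near $t=0$, the function $\tilde u$ is uniformly bounded above \emph{and below}, so its evolution equation~(\ref{eq:tilde}) is uniformly parabolic all the way down to the shifted boundary, and the dilation-invariant Schauder estimates on intervals $[\tau,4\tau]$ give $\|\tilde u\|_{2+\alpha}$ bounds that are uniform as $\tau\to0$; translating back yields $|\nabla m|+|\nabla^{2}m|\le C(t-1)$ and hence the curvature bound on $A_{[1,2]}$. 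Without this substitution, the equation~(\ref{eq:RFw}) you propose to bootstrap has principal coefficient $\tfrac{1}{2w}=\tfrac{1}{2}u^{2}\to\infty$ precisely at $t=1$, so it is not uniformly parabolic there, standard Schauder theory gives constants that blow up as $\delta\to0$ on $A_{[1+\delta,T]}$ (as your own Step 2 implicitly concedes), and no mechanism is supplied for extracting $\epsilon$-uniform second-derivative bounds near $t=1$ from the $C^{0}$ barriers alone. The claim that the metric ``closes up smoothly across $\rho=0$'' is likewise unsupported as written and is stronger than what the theorem asserts. To repair the proof you should introduce the rescaled unknown $\tilde u$ (equivalently $\tilde m(t)=m(t+1)-\tfrac12$), verify that its equation has the same structure as~(\ref{eq:RFu}) with rescaled fields $\tilde M,\tilde R,\tilde R_{N}$, and run the dilation-invariant Schauder estimate on $\tilde u$ near $t=0$; everything else in your argument then goes through.
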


The outline of the paper is as follows: In Section 2, we derive the
parabolic equation for $u$ and its equivalent forms. In Section 3,
we prove Theorem \ref{thm:M1} in two steps. First, we prove the existence
of $u$ (Theorem \ref{thm:existence}). Second, we discuss decay conditions
for $\bar{R}$ and metrics $e^{2f}\sigma$ which ensure asymptotic
flatness of $\bar{g}$ (Theorem \ref{thm:AF}). After we prove Theorem \ref{thm:M1}, we show there exists
a solution $u$ such that the boundary surface is the outermost minimal
surface (Theorem \ref{thm:M2}). In Section 4, we prove similar existence
results under Ricci flow foliations.


\section{Curvature calculations}
\label{sec:1}
For now we use $g(t)$ to denote a family of metrics on $\Sigma$.
Let   $\bar{g}=u^{2}dt^{2}+g(t)$ be a metric
on $N=[1,\infty)\times\Sigma$, and $\bar{R}$ and $R$ denote the scalar
curvatures of $\bar{g}(t)$ and $g$ respectively. Let $H$ and $\left|A\right|$
denote the mean curvature and the norm squared of the second fundamental
form of $\Sigma_t=\{ t \} \times \Sigma$. A direct computation shows that the Ricci curvature
of $g$ is given by
\[
g^{ij}R_{3ij}^3=-\frac{1}{u}\frac{\partial}{\partial t}H-\frac{1}{u}\Delta u-|A|^2,
\]
where  $\Delta$ is the Laplacian
with respect to $g(t)$.
The Gauss equation gives that 
\[
g^{ik}g^{jl}\bar{R}_{ijkl}=R-H^{2}+|A|^{2}\mbox{ where }i,j,k,l=1,2.
\]
Combining the above two equations, the scalar curvature $\bar{R}$ with
a metric of the form 
\[
\bar{g}=u^{2}dt^{2}+g(t)
\]
is given by 
\begin{equation}
\bar{R}=-\frac{2}{u}\frac{\partial}{\partial t}H-\frac{2}{u}\Delta u-|A|^{2}+R-H^{2}.\label{eq:R}
\end{equation}
The second fundamental forms $h_{ij},i,j=1,2$ measure the change of the metric along
the normal direction $\nu$. 

When $g(t)=t^{2}e^{2f}\sigma,$
the second fundamental forms are 
\[
h_{ij}=\frac{1}{u}\left(\frac{1}{t}+\frac{\partial f}{\partial t}\right)\sigma_{ij},\mbox{ where }i,j=1,2.
\]
In particular, 
\[
H=\frac{2}{u}\left(\frac{1}{t}+\frac{\partial f}{\partial t}\right)\quad\hbox{and}\quad|A|^{2}=\frac{2}{u^{2}}\left(\frac{1}{t}+\frac{\partial f}{\partial t}\right)^{2}.
\]
From (\ref{eq:R}) and above, we have
\[
\bar{R}=\frac{4}{u^{3}}\left(\frac{1}{t}+\frac{\partial f}{\partial t}\right)\frac{\partial u}{\partial t}-\frac{4}{u^{2}}\frac{\partial}{\partial t}\left(\frac{1}{t}+\frac{\partial f}{\partial t}\right)-\frac{2}{t^{2}u}\Delta_{f}u+\frac{1}{t^{2}}R_{f}-\frac{6}{u^{2}}\left(\frac{1}{t}+\frac{\partial f}{\partial t}\right)^{2},
\]
where $\Delta_f$ is the Laplacian with respect to $e^{2f}\sigma$.
We can rewrite it as
\begin{eqnarray*}
\left(\frac{1}{t}+\frac{\partial f}{\partial t}\right)\frac{\partial u}{\partial t} & = & \frac{1}{2t^{2}}u^{2}\Delta_{f}u+\left(\frac{\partial}{\partial t}\left(\frac{1}{t}+\frac{\partial f}{\partial t}\right)+\frac{3}{2}\left(\frac{1}{t}+\frac{\partial f}{\partial t}\right)^{2}\right)u\\
 &  & -\frac{1}{4t^{2}}\left(R_{f}-t^{2}\bar{R}\right)u^{3}.
\end{eqnarray*}
Introducing $w=u^{-2}$ and ${\displaystyle m=\frac{t}{2}\left(1-u^{-2}\right)}$, we have the equivalent forms
\begin{eqnarray}
\left(\frac{1}{t}+\frac{\partial f}{\partial t}\right)\frac{\partial w}{\partial t} 
& = & \frac{1}{2t^{2}w}\Delta_{f}w+\frac{3}{2t^{2}}u\nabla u\cdot\nabla w + \frac{1}{2t^{2}}\left(R_{f}-t^{2}\bar{R}\right)  \nonumber \\
 &  & -\left(2\frac{\partial}{\partial t}\left(\frac{1}{t}+\frac{\partial f}{\partial t}\right)+3\left(\frac{1}{t}+\frac{\partial f}{\partial t}\right)^{2}\right)w,\label{eq:w}
\end{eqnarray} 
and
\begin{eqnarray} 
\left(\frac{1}{t}+\frac{\partial f}{\partial t}\right)\frac{\partial m}{\partial t} 
& = & \frac{u^2}{2t^2}\Delta_{f}m+\frac{3u}{2t^{2}}\nabla u\cdot\nabla m-\left(2\frac{\partial^{2}f}{\partial t^{2}}+\frac{5}{t}\frac{\partial f}{\partial t}+3\left(\frac{\partial f}{\partial t}\right)^{2}\right)m\nonumber \\
 &  & -\frac{1}{4t}\left(R_{f}-2-t^{2}\bar{R}-4t^{2}\frac{\partial^{2}f}{\partial t^{2}}-12t\frac{\partial f}{\partial t}-6\left(t\frac{\partial f}{\partial t}\right)^{2}\right),\label{eq:m}
\end{eqnarray}
where $\Delta_f$ and $\nabla$ are the Laplacian and the covariant derivatives on $\Sigma_t =\{t\}\times \Sigma$ with respect to $e^{2f}\sigma$.

For Ricci flow ansatz, we consider $N=[1,\infty)\times\Sigma$
equipped with the metric \[
\bar{g}=u^{2}dt^{2}+t^{2}g_{ij}(t,x)dx^{i}dx^{j},\] 
 where $g(t,x)$
is the solution to the modified Ricci flow (\ref{eq:RF}). 
Direct computation shows that the second fundamental forms $h_{ij}$ on $\Sigma_{t}$ with respect to the normal ${\displaystyle \nu=\frac{1}{u}\frac{\partial}{\partial t}}$
are given by
\[
        h_{ij}=\frac{1}{u}\left(\frac{1}{t}\bar{g}_{ij}+t^{2}M_{ij}\right),\quad i,j=1,2;
\]
the mean curvature and the norm squared of the second fundamental
form $|A|$ are 
\begin{equation}
        H=\frac{2}{tu}\quad\hbox{and}\quad|A|^{2}=\frac{2}{t^{2}u^{2}}+\frac{\left|M_{ij}\right|^{2}}{u^{2}},\label{eq:RFH}
\end{equation}
respectively. By equation (\ref{eq:R}), the scalar curvature $\bar{R}$
of $\bar{g}$ is given by
     \[
         \bar{R}=\frac{4}{tu^{3}}\frac{\partial u}{\partial t}-\frac{2}{t^{2}u}\Delta u+\frac{R}{t^{2}}-\frac{2}{t^{2}u^{2}}-\frac{\left|M\right|{}^{2}}{u^{2}},
     \]
where $\Delta$ is the Laplacian with respect to $g(t)$.     
The metric $\bar{g}=u^{2}dt^{2}+t^{2}g(t,x)$ has the scalar curvature
$\bar{R}$ if and only if $u$ satisfies the parabolic equation
\begin{eqnarray}
              t\frac{\partial u}{\partial t}=\frac{1}{2}u^{2}\Delta u+\left(\frac{t^{2}}{4}\left|M\right|^{2}+\frac{1}{2}\right)u-\frac{1}{4}\left(R-t^{2}\bar{R}\right)u^{3}.\label{eq:RFu}
\end{eqnarray}
The equations for the corresponding terms $w=u^{-2}$ and ${\displaystyle m=\frac{t}{2}(1-u^{-2})}$
are as follows 
     \begin{equation}
                  t\partial_{t}w=\frac{1}{2w}\Delta w+\frac{3}{2} u \nabla u \cdot \nabla w-\left(\frac{t^{2}}{2}|M|^{2}+1\right)w+\frac{R}{2}-\frac{t^{2}}{2}\bar{R},\label{eq:RFw}
     \end{equation}
and
    \begin{equation}
      t\partial_{t}m=\frac{1}{2}u^{2}\Delta m+\frac{3}{2}u\nabla u \cdot \nabla m-\frac{t^{2}}{2}|M|^{2}m+\frac{t^{3}}{4}|M|^{2}+\frac{t}{2}-\frac{tR}{4}+\frac{t^{3}}{4}\bar{R},\label{eq:RFm}
    \end{equation}
where $\Delta$ and $\nabla$ are the Laplacian and the covariant derivatives on $\Sigma_t =\{t\}\times \Sigma$ with respect to $g(t)$.


\section{Existence}
\label{sec:2}
In this section we use the maximum principle to obtain $C^{0}$ estimates
for $u^{-2}$ (Proposition \ref{prop: C0bounds}), and prove Schauder
estimates for $u$ and $m$ (Proposition \ref{prop:interior est}).
Using these a priori estimates, we prove long-time existence of solution
$u.$ Then we show that under suitable decay conditions of the foliation
and the scalar curvature $\bar{R}$, the metric $\bar{g}=u^{2}dt^{2}+t^{2}g(t,x)$
is asymptotically flat and has finite ADM mass. We basically follow the argument in \cite{Bar93}, see also \cite{ST02}.


\begin{prop}\label{prop: C0bounds}
Suppose $u\in C^{2+\alpha}\left(A_{[t_{0},t_{1}]}\right)$
, $1\leq t_{0}<t_{1}$ is a positive solution to (\ref{eq:u}). Then
for $t_{0}\leq t\leq t_{1}$ we have 
   \begin{eqnarray*}
              u^{-2}\left(t,x\right) 
               & \geq &
                     \int_{t_{0}}^{t}\frac{1}{2\tau^{2}}\left(\frac{R_{f}-\tau^{2}\bar{R}}{\frac{1}{\tau}
                       +\frac{\partial f}{\partial\tau}}\right)_{*}e^{-\int_{\tau}^{t}\left(2\frac{\partial}{\partial s}\ln\left(\frac{1}{s}+\frac{\partial f}{\partial s}\right)+3\left(\frac{1}{s}+\frac{\partial f}{\partial s}\right)\right)^{*}ds}d\tau\\
              &  & +w_{*}(t_{0})e^{-\int_{t_{0}}^{t}\left(2\frac{\partial}{\partial s}\ln\left(\frac{1}{s}+\frac{\partial f}{\partial s}\right)+3\left(\frac{1}{s}+\frac{\partial f}{\partial s}\right)\right)^{*}ds},\end{eqnarray*}
and
\begin{eqnarray*}
                        u^{-2}\left(t,x\right) & \leq & \int_{t_{0}}^{t}\frac{1}{2\tau^{2}}\left(\frac{R_{f}-\tau^{2}\bar{R}}{\frac{1}{\tau}
                        +\frac{\partial f}{\partial\tau}}\right)^{*}e^{-\int_{\tau}^{t}\left(2\frac{\partial}{\partial s}\ln\left(\frac{1}{s}+\frac{\partial f}{\partial s}\right)+3\left(\frac{1}{s}+\frac{\partial f}{\partial s}\right)\right)_{*}ds}d\tau\\
              &  & +w^{*}(t_{0})e^{-\int_{t_{0}}^{t}\left(2\frac{\partial}{\partial s}\ln\left(\frac{1}{s}+\frac{\partial f}{\partial s}\right)+3\left(\frac{1}{s}+\frac{\partial f}{\partial s}\right)\right)_{*}ds}.
     \end{eqnarray*}
If we further assume that $\bar{R}$ is defined on $N$
such that the functions
      \begin{eqnarray*}
               \delta_{*}(t) 
                            & = &
                             \int_{1}^{t}\frac{1}{2\tau^{2}}\left(\frac{R_{f}-\tau^{2}\bar{R}}{\frac{1}{\tau}+\frac{\partial f}{\partial\tau}}\right)_{*}
                             e^{-\int_{\tau}^{t}\left(2\frac{\partial}{\partial s}\ln\left(\frac{1}{s}+\frac{\partial f}{\partial s}\right)+3\left(\frac{1}{s}+\frac{\partial f}{\partial s}\right)\right)^{*}ds}d\tau,
 \end{eqnarray*}
and                            
\begin{eqnarray*}                             
             \delta^{*}(t) 
                            & = &
                              \int_{1}^{t}\frac{1}{2\tau^{2}}\left(\frac{R_{f}-\tau^{2}\bar{R}}{\frac{1}{\tau}+\frac{\partial f}{\partial\tau}}\right)^{*}
                              e^{-\int_{\tau}^{t}\left(2\frac{\partial}{\partial s}\ln\left(\frac{1}{s}+\frac{\partial f}{\partial s}\right)+3\left(\frac{1}{s}+\frac{\partial f}{\partial s}\right)\right)_{*}ds}d\tau
       \end{eqnarray*}
are defined and finite for all $t\in[1,\infty)$, then the estimates
may be rewritten as 
      \begin{eqnarray*}
               u^{-2}(t,x) 
                         & \geq & 
                                    \delta_{*}(t)+\left(u^{*-2}(t_{0})-\delta_{*}(t_{0})\right)e^{-\int_{t_{0}}^{t}\left(2\frac{\partial}{\partial s}\ln\left(\frac{1}{s}+\frac{\partial f}{\partial s}\right)+3\left(\frac{1}{s}+\frac{\partial f}{\partial s}\right)\right)^{*}ds},
\end{eqnarray*}
and                            
\begin{eqnarray*}      
              u^{-2}(t,x) 
                         & \leq & 
                                   \delta^{*}(t)+\left(u_{*}^{-2}(t_{0})-\delta^{*}(t_{0})\right)e^{-\int_{t_{0}}^{t}\left(2\frac{\partial}{\partial s}\ln\left(\frac{1}{s}+\frac{\partial f}{\partial s}\right)+3\left(\frac{1}{s}+\frac{\partial f}{\partial s}\right)\right)_{*}ds}.
       \end{eqnarray*}
 \end{prop}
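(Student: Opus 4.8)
\emph{Proof proposal.} The plan is to recast equation \eqref{eq:w} for $w=u^{-2}$ in a form to which the scalar parabolic maximum principle on the closed surface $\Sigma$ applies, following \cite{Bar93}. Since $u\in C^{2+\alpha}(A_{[t_0,t_1]})$ is positive, $w$ is a positive classical solution of \eqref{eq:w}; dividing \eqref{eq:w} by $\tfrac1t+\tfrac{\partial f}{\partial t}$ (which is positive, as in the theorems) puts it in the form
\[
\frac{\partial w}{\partial t}=\mathcal L_t w+Q-Pw,\qquad
\mathcal L_t v:=\frac{\Delta_f v}{2t^2w\left(\frac1t+\frac{\partial f}{\partial t}\right)}+\frac{3u\,\nabla u\cdot\nabla v}{2t^2\left(\frac1t+\frac{\partial f}{\partial t}\right)},
\]
with $Q=\frac{1}{2t^2}\cdot\frac{R_f-t^2\bar R}{\frac1t+\frac{\partial f}{\partial t}}$ and $P=2\frac{\partial}{\partial t}\ln\left(\frac1t+\frac{\partial f}{\partial t}\right)+3\left(\frac1t+\frac{\partial f}{\partial t}\right)$. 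On the compact slab $A_{[t_0,t_1]}$ the function $u$ is bounded between positive constants, so $\mathcal L_t$ has $C^\alpha$ coefficients, leading coefficient bounded between positive constants, and no zeroth-order term; in particular it is uniformly parabolic there, and $Q$, $P^*$, $P_*$ are continuous in $t$.

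For the lower bound, set $\tilde w=w\,e^{\int_{t_0}^tP^*(s)\,ds}$ and $\hat w=\tilde w-\int_{t_0}^tQ_*(\tau)\,e^{\int_{t_0}^\tau P^*(s)\,ds}\,d\tau$. Because $P^*$ and $Q_*$ depend on $t$ only, they are annihilated by $\mathcal L_t$, and a direct computation from the displayed equation gives
\[
\frac{\partial \hat w}{\partial t}=\mathcal L_t\hat w+(Q-Q_*)\,e^{\int_{t_0}^tP^*(s)\,ds}+(P^*-P)\,w\,e^{\int_{t_0}^tP^*(s)\,ds}\ \ge\ \mathcal L_t\hat w,
\]
where the inequality uses $Q\ge Q_*$, $P^*\ge P$, and $w=u^{-2}>0$. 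Thus $\hat w$ is a supersolution of a homogeneous linear uniformly parabolic equation on the closed manifold $\Sigma$, so $t\mapsto\inf_\Sigma\hat w(t,\cdot)$ is nondecreasing and hence $\hat w(t,x)\ge\inf_\Sigma\hat w(t_0,\cdot)=w_*(t_0)$. Unwinding the definition of $\hat w$ and recording $Q_*(\tau)=\frac1{2\tau^2}\bigl(\tfrac{R_f-\tau^2\bar R}{\frac1\tau+\frac{\partial f}{\partial\tau}}\bigr)_*$ and $P^*(s)=\bigl(2\frac{\partial}{\partial s}\ln(\frac1s+\frac{\partial f}{\partial s})+3(\frac1s+\frac{\partial f}{\partial s})\bigr)^*$ yields the first estimate. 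The second (upper) estimate is obtained the same way with $P^*$ replaced by $P_*$ and $Q_*$ by $Q^*$: then $(Q-Q^*)e^{\int P_*}\le0$ and $(P_*-P)w\le0$, so the corresponding function is a subsolution and $t\mapsto\sup_\Sigma(\cdot)$ is nonincreasing.

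Finally, for the rewritten estimates one assumes $\delta_*,\delta^*$ finite on $[1,\infty)$ and splits $\int_1^t=\int_1^{t_0}+\int_{t_0}^t$ in the definition of $\delta_*$; since
\[
\int_1^{t_0}\frac{1}{2\tau^2}\Bigl(\tfrac{R_f-\tau^2\bar R}{\frac1\tau+\frac{\partial f}{\partial\tau}}\Bigr)_*e^{-\int_\tau^tP^*(s)\,ds}\,d\tau=e^{-\int_{t_0}^tP^*(s)\,ds}\,\delta_*(t_0),
\]
the lower estimate becomes $u^{-2}(t,x)\ge\delta_*(t)+\bigl(u^{*-2}(t_0)-\delta_*(t_0)\bigr)e^{-\int_{t_0}^tP^*(s)\,ds}$ after recording $w_*(t_0)=u^{*-2}(t_0)$, and symmetrically for the upper estimate with all stars and subscripts exchanged. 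I do not expect a genuine obstacle: this is a textbook maximum-principle argument once \eqref{eq:w} has had its zeroth-order term absorbed into an integrating factor. The points demanding care are (i) the correct pairing of the extremal quantities — $Q_*$ with $P^*$ for the lower bound, $Q^*$ with $P_*$ for the upper — which is dictated by the signs of $(Q-Q_*)$ and $(P^*-P)$ (resp. their counterparts) once the integrating factor is inserted; (ii) using the positivity $w=u^{-2}>0$ to sign the terms $(P^*-P)w$ and $(P_*-P)w$; and (iii) the elementary but slightly lengthy exponential bookkeeping in the last step, where finiteness of $\delta_*,\delta^*$ is exactly what makes the split integrals converge.
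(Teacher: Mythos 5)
Your proposal is correct and takes essentially the same approach as the paper: both divide (\ref{eq:w}) by $\tfrac1t+\tfrac{\partial f}{\partial t}$, apply the parabolic maximum principle on the closed surface $\Sigma$, and compare with the associated ODE via the integrating factor $e^{\int P^{*}}$ (resp.\ $e^{\int P_{*}}$). The only difference is presentational — the paper writes the differential inequality for $w_{*}(t)$ and ``solves the associated O.D.E.,'' while you package the same computation as an explicit super/subsolution $\hat w$ whose spatial infimum/supremum is monotone.
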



\begin{proof}
Applying the parabolic maximum principle to (\ref{eq:w}) for $w=u^{-2}$ gives 
       \begin{eqnarray*}
               w_{*}'(t) 
                      & \geq & 
                              -\left(2\frac{\partial}{\partial t}\ln\left(\frac{1}{t}+\frac{\partial f}{\partial t}\right)+3\left(\frac{1}{t}+\frac{\partial f}{\partial t}\right)\right)^{*}w_{*}\\
                      &  & +\frac{1}{2t^{2}}\left(\left(R_{f}-t^{2}\bar{R}\right)\left(\frac{1}{t}+\frac{\partial f}{\partial t}\right)^{-1}\right)_{*}.
         \end{eqnarray*}
Solving the associated O.D.E., we have 
     \begin{eqnarray*}
            w_{*}(t) 
                    & \geq &
                                   \int_{t_{0}}^{t}\frac{1}{2\tau^{2}}\left(\frac{R_{f}-\tau^{2}\bar{R}}{\frac{1}{\tau}+\frac{\partial f}{\partial\tau}}\right)_{*}
                                        e^{\int_{t_{0}}^{\tau}\left(2\frac{\partial}{\partial s}\ln\left(\frac{1}{s}+\frac{\partial f}{\partial s}\right)+3\left(\frac{1}{s}+\frac{\partial f}{\partial s}\right)\right)^{*}ds}d\tau \\
                                 && \times e^{-\int_{t_{0}}^{t}\left(2\frac{\partial}{\partial s}\ln\left(\frac{1}{s}+\frac{\partial f}{\partial s}\right)+3\left(\frac{1}{s}+\frac{\partial f}{\partial s}\right)\right)^{*}ds}\\
                                 && +w_*(t_{0})e^{-\int_{t_{0}}^{t}\left(2\frac{\partial}{\partial s}\ln\left(\frac{1}{s}+\frac{\partial f}{\partial s}\right)+3\left(\frac{1}{s}+\frac{\partial f}{\partial s}\right)\right)^{*}ds}\\
                    & = & 
                                   \int_{t_{0}}^{t}\frac{1}{2\tau^{2}}\left(\frac{R_{f}-\tau^{2}\bar{R}}{\frac{1}{\tau}+\frac{\partial f}{\partial\tau}}\right)_{*}
                                         e^{-\int_{\tau}^{t}\left(2\frac{\partial}{\partial s}\ln\left(\frac{1}{s}+\frac{\partial f}{\partial s}\right)+3\left(\frac{1}{s}+\frac{\partial f}{\partial s}\right)\right)^{*}ds}d\tau\\
                               &  & +w_{*}(t_{0})e^{-\int_{t_{0}}^{t}\left(2\frac{\partial}{\partial s}\ln\left(\frac{1}{s}+\frac{\partial f}{\partial s}\right)+3\left(\frac{1}{s}+\frac{\partial f}{\partial s}\right)\right)^{*}ds}.
       \end{eqnarray*}
Therefore
        \[
              w(t)\geq\delta_{*}(t)+\left(w_*(t_{0})-\delta_{*}(t_{0})\right)e^{-\int_{t_{0}}^{t}\left(2\frac{\partial}{\partial s}\ln\left(\frac{1}{s}+\frac{\partial f}{\partial s}\right)+3\left(\frac{1}{s}+\frac{\partial f}{\partial s}\right)\right)^{*}ds}.
       \]
Similarly, applying the maximum principle to $w^{*}$, we get the
upper bound for $u^{-2}$.
\end{proof}

\begin{prop}
\label{prop:interior est}
Let $I=[1,t_{1}]$ and $I'=[t_{0},t_{1}]$
with $1<t_{0}<t_{1}$, and suppose $u\in C^{2+\alpha}(A_{I})$ is
a solution to (\ref{eq:u}) on $A_{I}$ with source functions $\bar{R}$
and $f$ such that 
\[
0<f_{0}\leq\frac{1}{t}+\frac{\partial f}{\partial t}\leq f_{0}^{-1}\quad\mbox{for all }(t,x)\in A_{I},
\]
for some constant $f_{0}>0.$ Further suppose there is a constant
$\delta_{0}>0$ such that 
\[
0<\delta_{0}\leq u^{-2}(x,t)\leq\delta_{0}^{-1}\quad\mbox{for all }(t,x)\in A_{I}.
\]
Then 
        \begin{equation}
               \|u\|_{2+\alpha,I'}
                       \leq C
                                \left(\left\| \frac{\partial f}{\partial t}\right\| _{\alpha,I},\left\| \frac{\partial^{2}f}{\partial t^{2}}\right\| _{\alpha,I},\left\| R_{f}\right\| _{\alpha,I},\left\| \bar{R}\right\| _{\alpha,I}\right),\label{eq:est u}
         \end{equation}
where $C$ is a constant dependent on $t_{0},t_{1},f_{0},\delta_{0},\left\| \nabla f\right\| _{0,I}$.

With ${\displaystyle m=\frac{t}{2}\left(1-u^{-2}\right)}$, there
is a constant $C$ such that 
         \begin{equation}
                \left\| m\right\| _{2+\alpha,I'}
                         \leq C
                               \left(\left\| \frac{\partial f}{\partial t}\right\| _{\alpha,I},\left\| \frac{\partial^{2}f}{\partial t^{2}}\right\| _{\alpha,I},\left\| 1-\frac{1}{2}R_{f}\right\| _{\alpha,I},\left\| \bar{R}\right\| _{\alpha,I}\right),\label{est m}
         \end{equation}
where $C$ depends on $t_{0},t_{1},f_{0},\delta_{0},\left\| \nabla f\right\| _{0,I}$.
\end{prop}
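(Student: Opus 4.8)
The plan is to establish interior Schauder estimates for the two quasilinear parabolic equations \eqref{eq:u} and \eqref{eq:m} by freezing the nonlinear coefficients and appealing to the standard linear parabolic Schauder theory (see Ladyzhenskaya–Solonnikov–Uraltseva, or the treatment in Bartnik \cite{Bar93} that is being followed). The hypotheses have been arranged precisely so that, once $u$ is viewed as a known function, equation \eqref{eq:u} becomes a uniformly parabolic linear equation for $u$ with coefficients controlled in $C^{\alpha}$.

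First I would rewrite \eqref{eq:u} in the divergence-free linear form
\[
a(t,x)\,\partial_t u = \tfrac{1}{2t^2}u^2\,\Delta_f u + b(t,x)\,u + c(t,x)\,u^3,
\]
with $a=\tfrac{1}{t}+\partial_t f$, $b=\partial_t\!\big(\tfrac1t+\partial_t f\big)+\tfrac32\big(\tfrac1t+\partial_t f\big)^2$, and $c=-\tfrac{1}{4t^2}(R_f-t^2\bar R)$. The bound $0<f_0\le a\le f_0^{-1}$ makes the $\partial_t$-coefficient bounded away from zero, and the bound $\delta_0\le u^{-2}\le\delta_0^{-1}$ makes $\tfrac{1}{2t^2}u^2$ a uniformly elliptic coefficient on the compact $t$-range $[t_0,t_1]$; the zeroth-order coefficients $b$ and $cu^2$ are then controlled in $C^\alpha(A_I)$ in terms of $\|\partial_t f\|_{\alpha,I}$, $\|\partial_t^2 f\|_{\alpha,I}$, $\|R_f\|_{\alpha,I}$, $\|\bar R\|_{\alpha,I}$, and $\|u\|_{0,I}$ (which is bounded by $\delta_0^{-1/2}$). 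Here the dependence on $\|\nabla f\|_{0,I}$ enters through the coefficients of $\Delta_f$ in local coordinates: $\Delta_f = e^{-2f}\Delta_\sigma$, so the leading coefficients of the linear operator involve $e^{-2f}$ (bounded once we know $u^{-2}$ is bounded, via the curvature estimates controlling $f$, or directly from the stated hypotheses), and the lower-order part of $\Delta_f$ on the round sphere vanishes. I would first run a De Giorgi–Nash–Moser/interior-gradient argument (or simply a first bootstrapping with $C^\delta$ estimates) to get $u\in C^{\delta}(A_{I''})$ on a slightly smaller slab, then apply linear interior Schauder estimates on $A_{I'}$ to upgrade to $\|u\|_{2+\alpha,I'}$, using that the parabolic boundary contribution at $t=1$ is pushed away since $t_0>1$.

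For the estimate \eqref{est m} on $m=\tfrac t2(1-u^{-2})$ I would use equation \eqref{eq:m}, which is again linear in $m$ once $u$ and $\nabla u$ are regarded as known: the coefficient of $\Delta_f m$ is $\tfrac{u^2}{2t^2}$ (uniformly elliptic by the same $\delta_0$ bound), the drift $\tfrac{3u}{2t^2}\nabla u\cdot\nabla m$ has coefficients controlled by the $C^{1+\alpha}$ bound on $u$ just obtained, the zeroth-order coefficient is controlled by $\|\partial_t^2 f\|_{\alpha,I}$ and $\|\partial_t f\|_{\alpha,I}$, and the inhomogeneous term is controlled by $\|1-\tfrac12 R_f\|_{\alpha,I}$, $\|\bar R\|_{\alpha,I}$, $\|\partial_t f\|_{\alpha,I}$, $\|\partial_t^2 f\|_{\alpha,I}$. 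Applying the linear interior Schauder estimate to this equation on $A_{I'}$ yields \eqref{est m}. Alternatively, $\|m\|_{2+\alpha,I'}$ follows algebraically from $\|u\|_{2+\alpha,I'}$ together with the lower bound $u^{-2}\ge\delta_0$, since $m$ is a smooth function of $t$ and $u$; I would mention this as the quicker route but prefer the direct parabolic argument because it exhibits the sharp dependence on $1-\tfrac12 R_f$ rather than on $R_f$ itself, which is what the later applications need.

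The main obstacle is organizing the bootstrap cleanly: \eqref{eq:u} is genuinely quasilinear (the leading coefficient $u^2$ and the cubic term both involve the unknown), so one cannot apply linear Schauder directly without first obtaining an interior Hölder modulus of continuity for $u$. The assumed two-sided bound $\delta_0\le u^{-2}\le\delta_0^{-1}$ is what rescues this — it converts the quasilinear problem into a uniformly parabolic linear problem with $C^0$ coefficients, for which the De Giorgi–Nash–Moser theorem supplies the initial $C^\delta$ bound, after which the classical Schauder iteration closes. Care is also needed because $\Delta_f$ is the Laplacian of a non-round metric; but writing it as $e^{-2f}\Delta_\sigma$ and covering $\Sigma$ by finitely many coordinate charts reduces everything to the flat local theory with coefficients bounded in terms of $f_0$, $\delta_0$, $t_0$, $t_1$, and $\|\nabla f\|_{0,I}$, exactly as asserted.
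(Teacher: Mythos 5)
Your proposal is correct and follows essentially the same route as the paper: use the assumed two-sided bounds on $u^{-2}$ and on $\tfrac1t+\partial_t f$ to make the equation uniformly parabolic, obtain an initial interior H\"older estimate of De Giorgi--Nash--Moser type (the paper does this by first writing the equation in divergence form $\partial_s u=\partial_{x^i}a^i(x,t,u,\partial u)-a(x,t,u,\partial u)$ with $s=\ln t$ and invoking LSU Theorem V.1.1), then close with linear interior Schauder estimates, and finally apply Schauder once more to the linear equation \eqref{eq:m} for $m$ to get \eqref{est m}. The only cosmetic difference is that you state the equation in non-divergence form before invoking De Giorgi--Nash--Moser, whereas the paper carries out the divergence-form rewriting explicitly to verify the structure conditions; the substance is identical.
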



\begin{proof}
Let $s=\ln t.$ Then ${\displaystyle \frac{\partial}{\partial s}=t\frac{\partial}{\partial t}}$.
Let ${\displaystyle \gamma=\frac{1}{2}\left(1+t\frac{\partial f}{\partial t}\right)^{-1}}.$
        \begin{eqnarray*}
                    \frac{\partial u}{\partial s} 
                            & = & 
                                     \gamma u^{2}\Delta_{f}u+t^{2}\gamma\left(2\frac{\partial}{\partial t}\left(\frac{1}{t}+\frac{\partial f}{\partial t}\right)+3\left(\frac{1}{t}+\frac{\partial f}{\partial t}\right)^{2}\right)u\\
                                     &&-\frac{1}{2}\gamma\left(R_{f}-t^{2}\bar{R}\right)u^{3}\\
                            & = & \frac{\partial}{\partial x^{i}}\left(\gamma u^{2}e^{-2f} \sigma^{ij}\frac{\partial u}{\partial x^{j}}\right)-\frac{\partial}{\partial x^{i}}
                                           \left(\frac{\gamma}{e^{f}\sqrt{\det \sigma}}\right)e^{-f}\sqrt{\det \sigma}\sigma^{ij}\frac{\partial u}{\partial x^{j}}u^{2}\\
                                    &  & -2\gamma e^{-2f}\sigma^{ij}\frac{\partial u}{\partial x^{i}}\frac{\partial u}{\partial x^{j}}u+t^{2}\gamma\left(2\frac{\partial}{\partial t}
                                            \left(\frac{1}{t}+\frac{\partial f}{\partial t}\right)+3\left(\frac{1}{t}+\frac{\partial f}{\partial t}\right)^{2}\right)u\\
                                    &  & -\frac{1}{2}\gamma\left(R_{f}-t^{2}\bar{R}\right)u^{3}\\
                            & = & \frac{\partial}{\partial x^{i}}a^{i}(x,t,u,\partial u)-a(x,t,u,\partial u),
        \end{eqnarray*}
where $a(x,t,u,p)$ and $a^{i}(x,t,u,p)$ are functions defined by
       \[
             {\displaystyle a^{i}(x,t,u,p)=\gamma u^{2}e^{-2f}\sigma^{ij}p_j}
        \]
 and
        \begin{eqnarray*}
                    a(x,t,u,p) 
                          & = & \frac{\partial}{\partial x^{i}}\left(\frac{\gamma}{e^{f}\sqrt{\det \sigma}}\right)e^{-f}\sqrt{\det \sigma}\sigma^{ij}p_{j}u^{2}+2\gamma e^{-2f}\sigma^{ij}p_{i}p_{j}u\\
                           &  & -t^{2}\gamma\left(2\frac{\partial}{\partial t}\left(\frac{1}{t}+\frac{\partial f}{\partial t}\right)+3\left(\frac{1}{t}+\frac{\partial f}{\partial t}\right)^{2}\right)u+\frac{1}{2}\gamma\left(R_{f}-t^{2}\bar{R}\right)u^{3}
        \end{eqnarray*}
respectively. By the assumption, the functions $a$ and $a^{i}$ satisfy
that 
       \begin{eqnarray*}
                  a^{i}p_{i} &\geq& C|p|^{2},  \\
                       |a^{i}| &\leq& C'|p|, \\
       \end{eqnarray*}
and
       $$
               |a| \leq C''(1+|p|^{2}),
       $$
for some positive constants $C,$ $C',$ and $C''.$ Applying \cite[Theorem V.1.1]{LSU68},
we have
         \[
                ||u||_{\alpha',I''}\leq C_{1},
         \]
for some $0<\alpha'<1,\alpha'=\alpha(\delta_{0})$ and $C_{1}$ dependent
on $t_{0}$, $t_{1}$, $f_{0}$, $\delta_{0}$, $\left\| \frac{\partial f}{\partial t}\right\| _{0,I}$, $\left\| \frac{\partial^{2}f}{\partial t^{2}}\right\| _{0,I}$
$||\nabla f||_{0,I,}||R_{f}||_{0,I,}||\bar{R}||_{0,I}$ where $I'\subset I''\subset I$. 

Without loss of generality, we may assume $\alpha'\leq\alpha.$ The
usual Schauder interior estimates \cite[Theorem IV. 10.1]{LSU68}
give
           \[
                  ||u||_{2+\alpha',I'}\leq C_{2}\left(C_{1,}\left\| \frac{\partial f}{\partial t}\right\| _{\alpha,I},\left\| \frac{\partial^{2}f}{\partial t^{2}}\right\| _{\alpha,I},\left\| R_{f}\right\| _{\alpha,I},\left\| \bar{R}\right\| _{\alpha,I}\right).
           \]
Applying the Schauder estimates again to (\ref{eq:m}), we get
          \begin{eqnarray*}
                   \left(\frac{1}{t}+\frac{\partial f}{\partial t}\right)\frac{\partial m}{\partial t} 
                             & = & \frac{u^2}{2t^2}\Delta_{f}m+\frac{3u}{2t^{2}}\nabla u\cdot\nabla m-\left(2\frac{\partial^{2}f}{\partial t^{2}}+\frac{5}{t}\frac{\partial f}{\partial t}+3\left(\frac{\partial f}{\partial t}\right)^{2}\right)m\nonumber \\
                               &  & -\frac{1}{4t}\left(R_{f}-2-t^{2}\bar{R}-4t^{2}\frac{\partial^{2}f}{\partial t^{2}}-12t\frac{\partial f}{\partial t}-6\left(t\frac{\partial f}{\partial t}\right)^{2}\right).
           \end{eqnarray*}
The desired estimate (\ref{est m}) follows. 
\end{proof}

With the a priori estimates from above, we are ready to prove the
existence of the solution of the initial value problem (\ref{eq:u})
and (\ref{eq:initial}).


\begin{thm} \label{global existence}
\label{thm:existence} Assume $\bar{R}\in C^{\alpha}(A_{[t_{0},\infty)})$
and $f\in C^{2+\alpha}(A_{[t_{0},\infty)})$, $t_{0}>1$ such that the function
satisfies 
           \begin{equation}
                        0<1+t\frac{\partial f}{\partial t}<\infty\mbox{ for all }t_{0}\leq t<\infty.\label{parabolicity}
           \end{equation}
Further assume the nonnegative constant $K$ defined by
             \[
                         K=\sup_{t_{0}\leq t<\infty}\left\{ -\int_{t_{0}}^{t}\frac{1}{2\tau^{2}}\left(\frac{R_{f}-\tau^{2}\bar{R}}{\frac{1}{\tau}+\frac{\partial f}{\partial\tau}}\right)_{*}
                                e^{\int_{t_{0}}^{\tau}\left(2\frac{\partial}{\partial s}\ln\left(\frac{1}{s}+\frac{\partial f}{\partial s}\right)+3\left(\frac{1}{s}+\frac{\partial f}{\partial s}\right)\right)^{*}ds}d\tau\right\} 
              \]
satisfies $K<\infty$. Then for every $\varphi(x)\in C^{2,\alpha}\left(\Sigma\right)$
such that 
               \[
                          0<\varphi(x)<\frac{1}{\sqrt{K}}\mbox{ for all }x\in\Sigma,
                \]
there is a unique positive solution $u\in C^{2+\alpha}\left(A_{[t_{0},\infty)}\right)$
to (\ref{eq:u}) with initial condition
                \[
                       u(t_{0},\cdot)=\varphi(\cdot).
                \]
\end{thm}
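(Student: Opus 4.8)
The plan is to run the classical continuation argument for quasilinear parabolic equations: construct a short-time solution, establish $C^{0}$ bounds on $u^{-2}$ that keep the equation uniformly parabolic on every bounded time interval, upgrade these to $C^{2+\alpha}$ bounds by Schauder theory, and iterate. Passing to the time variable $s=\ln t$, as in the proof of Proposition~\ref{prop:interior est}, equation (\ref{eq:u}) becomes a genuinely parabolic quasilinear equation $\partial_s u=\gamma u^2\Delta_f u+(\text{lower-order terms in }u\text{ and }\nabla u)$ on $[\ln t_0,\infty)\times\Sigma$; hypothesis (\ref{parabolicity}) and the strict positivity of $\varphi$ make it uniformly parabolic near the initial slice, so the standard short-time existence theory (e.g.~\cite{LSU68}) yields a unique positive $u\in C^{2+\alpha}(A_{[t_0,t_0+\epsilon)})$ with $u(t_0,\cdot)=\varphi$. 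Let $[t_0,T^*)$ denote the maximal interval on which such a solution exists.

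The crux is to show $T^*=\infty$; suppose not. On $[t_0,T^*)$ the function $w=u^{-2}$ is a positive solution of (\ref{eq:w}), so Proposition~\ref{prop: C0bounds} applies. Writing $E(t)=\exp\int_{t_0}^t\big(2\partial_s\ln(\tfrac1s+\tfrac{\partial f}{\partial s})+3(\tfrac1s+\tfrac{\partial f}{\partial s})\big)^*ds$, the lower estimate of that proposition rearranges to
\[
w(t,x)\ \ge\ \frac{1}{E(t)}\Big(w_*(t_0)+\int_{t_0}^t\frac{1}{2\tau^2}\Big(\frac{R_f-\tau^2\bar{R}}{\frac1\tau+\frac{\partial f}{\partial\tau}}\Big)_*E(\tau)\,d\tau\Big)\ \ge\ \frac{w_*(t_0)-K}{E(t)},
\]
the last inequality being the very definition of $K$. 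Since $\Sigma$ is compact and $0<\varphi<K^{-1/2}$, we have $w_*(t_0)=(\varphi^*)^{-2}>K$, while $f\in C^{2+\alpha}$ and (\ref{parabolicity}) force $E$ to be continuous and strictly positive on the compact interval $[t_0,T^*]$; hence $w\ge\delta_0>0$ on $A_{[t_0,T^*)}$ for some $\delta_0=\delta_0(T^*)$. The companion upper bound of Proposition~\ref{prop: C0bounds}, whose coefficients are likewise continuous on $[t_0,T^*]$, gives $w\le\delta_0^{-1}$ after enlarging the constant. Thus $0<\delta_0\le u^{-2}\le\delta_0^{-1}$ on $A_{[t_0,T^*)}$ and the equation stays uniformly parabolic there.

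With these two-sided $C^{0}$ bounds, Proposition~\ref{prop:interior est} applied on time subintervals of $[t_0,T^*]$, together with the regularity up to $t=t_0$ supplied by the short-time construction, yields a uniform bound $\|u\|_{2+\alpha,[t_0,T^*)}\le C$. Therefore $u$ extends to a $C^{2+\alpha}$ function on $A_{[t_0,T^*]}$, and re-solving the initial value problem from the slice $t=T^*$ continues the solution past $T^*$, contradicting maximality. Hence $u$ exists on all of $A_{[t_0,\infty)}$, and since it is $C^{2+\alpha}$-bounded on every compact subinterval we get $u\in C^{2+\alpha}(A_{[t_0,\infty)})$. For uniqueness, if $u_1,u_2$ are positive $C^{2+\alpha}$ solutions with the same initial data, their difference solves a linear, uniformly parabolic equation with coefficients bounded on each compact time interval (by the $C^{2+\alpha}$ bounds) and vanishing at $t=t_0$; the parabolic maximum principle then gives $u_1\equiv u_2$.

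I expect the only genuine obstacle to be the lower bound on $w=u^{-2}$, i.e.\ ruling out finite-time blow-up of $u$: this is exactly where the smallness assumption $0<\varphi<K^{-1/2}$ is used, guaranteeing $w_*(t_0)-K>0$ so that the comparison ODE underlying Proposition~\ref{prop: C0bounds} keeps $w$ strictly positive on every bounded time interval. Once $u$ is pinned between two positive constants on each such interval, the remaining steps---interior Schauder estimates, continuation, and uniqueness---are routine.
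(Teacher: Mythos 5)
Your proposal is correct and follows essentially the same route as the paper: short-time existence from Schauder theory, the two-sided $C^{0}$ bounds on $w=u^{-2}$ from Proposition~\ref{prop: C0bounds} (with the hypothesis $\varphi<K^{-1/2}$ entering exactly as you say, to make $w_{*}(t_{0})-K>0$ and hence keep the equation uniformly parabolic on compact time intervals), then the interior Schauder estimates of Proposition~\ref{prop:interior est} and a continuation argument. The paper phrases the continuation as an open--closed argument for the set of admissible final times (after first rescaling $t\mapsto t_{0}t$ to normalize the initial slice to $t=1$), whereas you use a maximal-interval/blow-up formulation; these are equivalent, and your explicit uniqueness argument via the maximum principle fills in a step the paper leaves implicit.
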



\begin{proof}
{}Define $\tilde{u}(t,x)=u(t_{0}t,x)$. Observe that $u\in C^{2+\alpha}(A_{[t_{0},\infty)})$
satisfies (\ref{eq:u}) if and only if $\tilde{u}$ on the interval
$[1,\infty)$ satisfies
                   \begin{eqnarray}
                                     \left(\frac{1}{t}+\frac{\partial\tilde{f}}{\partial t}\right)\frac{\partial\tilde{u}}{\partial t} 
                                                        & = & \frac{1}{2t^{2}}\tilde{u}^{2}\Delta_{f}\tilde{u}+\left[\frac{\partial}{\partial t}\left(\frac{1}{t}+\frac{\partial\tilde{f}}{\partial t}\right)+\frac{3}{2}\left(\frac{1}{t}+\frac{\partial\tilde{f}}{\partial t}\right)^{2}\right]\tilde{u}\nonumber \\
                                  &  & -\frac{1}{4t^2}\left(\tilde{R}_f-t^2\tilde{R}\right)\tilde{u}^{3},\label{eq:tilde u}
                   \end{eqnarray}
where 
                   \[
                                  \tilde{f}(t,x)=f(t_{0}t,x),\tilde{\, R}_{f}(t,x)=R_{f}(t_{0}t,x),\,\hbox{and}\,\tilde{R}(t,x)=t_0^{2}\bar{R}(t_{0}t,x).
                   \]
Denoting the estimating functions of Proposition \ref{prop: C0bounds}
by $\tilde{\delta}^{*}(t)$ and $\tilde{\delta}_{*}(t)$, we have that 
                   \[
                                   \tilde{\delta}^{*}(t)=\delta^{*}(t_{0}t),\quad\tilde{\delta}_{*}(t)=\delta_{*}(t_{0}t)\mbox{ for all }1\leq t<\infty.
                    \]
It can be verified that 
                    \[
                            K=\sup_{1\leq t<\infty}\left\{ -\int_{1}^{t}\frac{1}{2\tau^{2}}\left(\frac{\tilde{R}_{f}-\tau^{2}\tilde{R}}{\frac{1}{\tau}+\frac{\partial\tilde{f}}{\partial\tau}}\right)_{*}
                                     e^{\int_{1}^{\tau}\left(2\frac{\partial}{\partial s}\ln\left(\frac{1}{s}+\frac{\partial\tilde{f}}{\partial s}\right)+3\left(\frac{1}{s}+\frac{\partial\tilde{f}}{\partial s}\right)\right)^{*}ds}d\tau\right\} .
                    \]
The upper bound 
                    \[
                             \varphi(x)<\frac{1}{\sqrt{K}}\quad\hbox{for\, all\,}\, x\in\Sigma
                    \]
implies
                    \begin{equation}
                                  \tilde{\delta}_{*}(t)+\tilde{u}^{*-2}(1)\exp\left(\int_{1}^{t}
                                                 \left(2\frac{\partial}{\partial s}\ln\left(\frac{1}{s}+\frac{\partial\tilde{f}}{\partial s}\right)+3\left(\frac{1}{s}+\frac{\partial\tilde{f}}{\partial s}\right)\right)^{*}ds\right)
                                   > 0\label{eq:pos}              
                     \end{equation}
for all $t\geq1$. Since the foliation satisfies (\ref{parabolicity}),
the equation (\ref{eq:tilde u}) for $\tilde{u}$ is parabolic. The
short-time existence of solutions can be obtained from Schauder theory
and the implicit function theorem. By Propositions \ref{prop: C0bounds}
and \ref{prop:interior est}, there is $T>0$ and $\tilde{u}\in C^{2+\alpha}(A_{[1,1+T]})$
satisfying (\ref{eq:tilde u}) and the initial condition $\tilde{u}(1,x)=\varphi(x)$
on $[1,1+T]$. Moreover by Proposition \ref{prop: C0bounds} and (\ref{eq:pos}),
there are functions $0<\delta_{1}(t)\leq\delta_{2}(t)<\infty,$ $t\geq1,$
independent of $T$, such that
                  \[
                            0 < \delta_{1}(t)\leq\tilde{u}^{-2}(t,x)\leq\delta_{2}(t) < \infty\quad\mbox{ for }\,1\leq t\leq1+T.
                  \]
Let $U=\left\{ t\in\mathbb{R}^{+}:\,\exists\,\tilde{u}\in C^{2+\alpha}(A_{[1,1+t]})\mbox{ satisfying (\ref{eq:tilde u}) and }\tilde{u}(1,x)=\varphi(x)\right\} .$
The local existence guarantees that $U$ is open in $\mathbb{{R}^{+}}.$
Since $[1,1+t]$ is compact, there is a constant $\delta_{0}$ such
that $0<\delta_{0}\leq\tilde{u}^{-2}(t,x)\leq\delta_{0}^{-1}$ for
all $(t,x)\in A_{[1,1+t]}.$ From the interior estimates (\ref{eq:est u})
of Proposition \ref{prop:interior est}, we have an a priori estimate
for $||\tilde{u}(1+t,\cdot)||_{2,\alpha}$. By the local existence,
the solution can be extended to $A_{[1,1+t+T]}$ for some $T$ independent
of $\tilde{u},$ which shows that $U$ is closed. Hence $\tilde{u}$
extends to a global solution $\tilde{u}\in C^{2+\alpha}(A_{[1,\infty)})$
and the function $u(t,x)=\tilde{u}(t/t_{0},x)$ is the required solution.
\end{proof}

Theorem \ref{thm:existence} gives the existence of the initial value
problem (\ref{eq:u}) and (\ref{eq:initial}); however, the asymptotic
behavior of the solution is still not controlled yet. In Theorem \ref{thm:AF},
we will describe decay conditions on the source functions $\bar{R}$
and $f(t,x)$ which ensure existence of solutions satisfying the boundary
behavior. 

From now on we use $C$ to denote a constant, but it may vary from
line to line.

\begin{lem}
\label{lem: af ineq} 
Assume that $f$ satisfies 
              \[
                      0<1+t\frac{\partial f}{\partial t}<\infty\mbox{ for all }1\leq t<\infty.
              \]
Suppose 
              \begin{equation}
                          t\left(\frac{\partial f}{\partial t}\right)^{*}\in L^{1}\left([1,\infty)\right)\label{L1 bound}
               \end{equation}
and 
               \begin{equation}
                         t\left(\frac{\partial}{\partial t}\ln\left(\frac{1}{t}+\frac{\partial f}{\partial t}\right)\right)^{*}-t\frac{d}{dt}\ln\left(\frac{1}{t}+\frac{\partial f}{\partial t}\right)^{*}\in L^{1}\left([1,\infty)\right).\label{lemma 8}
                \end{equation}
Then there is a constant $C$ such that 
               \begin{equation}
                            1-\frac{C}{t}
                                     \leq
                                            \int_{1}^{t}\frac{1}{\tau^{2}}\left(\frac{1}{\tau}+\frac{\partial f}{\partial\tau}\right)^{*-1}
                                                           e^{-\int_{\tau}^{t}\left(2\frac{\partial}{\partial s}\ln\left(\frac{1}{s}+\frac{\partial f}{\partial s}\right)+3\left(\frac{1}{s}+\frac{\partial f}{\partial s}\right)\right)^{*}ds}d\tau\leq1+\frac{C}{t},\label{AF-lower}
                \end{equation}
and 
               \begin{equation}
                              1-\frac{C}{t}
                                       \leq
                                              \int_{1}^{t}\frac{1}{\tau^{2}}\left(\frac{1}{\tau}+\frac{\partial f}{\partial\tau}\right)_{*}^{-1}
                                                             e^{-\int_{\tau}^{t}\left(2\frac{\partial}{\partial s}\ln\left(\frac{1}{s}+\frac{\partial f}{\partial s}\right)+3\left(\frac{1}{s}+\frac{\partial f}{\partial s}\right)\right)_{*}ds}d\tau\leq1+\frac{C}{t},\label{AF-upper}
                \end{equation}
for all $t\geq1$. 
\end{lem}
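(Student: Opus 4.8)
The plan is to reduce everything to the explicit ``model'' foliation $f\equiv0$ and to treat the general case as an $L^{1}$-small perturbation of it. Write $\phi(s,x)=\tfrac1s+\tfrac{\partial f}{\partial s}(s,x)$ and $\phi^{*}(s)=\sup_{\Sigma}\phi(s,\cdot)=\tfrac1s+\bigl(\tfrac{\partial f}{\partial s}\bigr)^{*}(s)$, and put $\psi:=\bigl(\tfrac{\partial f}{\partial s}\bigr)^{*}$. When $f\equiv0$ one has $\phi\equiv1/s$, the bracket $2\tfrac{\partial}{\partial s}\ln\phi+3\phi$ reduces to $1/s$, the integrand in \eqref{AF-lower} collapses to $1/t$, and the integral equals $1-1/t$ exactly; this is the target, with $C=1$. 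The essential difficulty in the general case is that the exponent is the supremum \emph{of a sum}. The first step is therefore the comparison
\[
2\tfrac{d}{ds}\ln\phi^{*}(s)+3\phi^{*}(s)\ \le\ \Bigl(2\tfrac{\partial}{\partial s}\ln\phi+3\phi\Bigr)^{*}(s)\ \le\ 2\tfrac{d}{ds}\ln\phi^{*}(s)+3\phi^{*}(s)+2h(s),
\]
with $h(s):=\bigl(\tfrac{\partial}{\partial s}\ln\phi\bigr)^{*}(s)-\tfrac{d}{ds}\ln\phi^{*}(s)\ge0$: the right inequality is $(A+B)^{*}\le A^{*}+B^{*}$, and the left one follows by evaluating the bracket at a point where $\phi(s,\cdot)$ is maximal and invoking the envelope identity $\tfrac{d}{ds}\phi^{*}(s)=\tfrac{\partial\phi}{\partial s}(s,\cdot)$ there, valid for a.e.\ $s$. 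By hypothesis \eqref{lemma 8}, $s\,h(s)\in L^{1}([1,\infty))$, so $h\in L^{1}$, $\int_{\tau}^{\infty}h=O(1/\tau)$, and $\tau\mapsto\int_{\tau}^{\infty}h\in L^{1}$ (by Tonelli, $\int_{1}^{\infty}\bigl(\int_{\tau}^{\infty}h\bigr)d\tau=\int_{1}^{\infty}(s-1)h(s)\,ds$).

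With Step~1 available, one integrates the model exponent in closed form. Since $\phi^{*}(s)=\tfrac1s+\psi(s)$ and \eqref{L1 bound} forces $\psi\in L^{1}$ (as $s\ge1$), the primitive $F(t):=\int_{1}^{t}\psi$ has a finite limit $F_{\infty}$ with $|F_{\infty}-F(\tau)|=\bigl|\int_{\tau}^{\infty}\psi\bigr|\le\tfrac1\tau\int_{\tau}^{\infty}s|\psi|\,ds=O(1/\tau)$, and $\tau\mapsto F_{\infty}-F(\tau)\in L^{1}$ by the same Tonelli argument. A direct computation gives
\[
\exp\Bigl(-\!\int_{\tau}^{t}\bigl(2\tfrac{d}{ds}\ln\phi^{*}+3\phi^{*}\bigr)ds\Bigr)=\Bigl(\frac{1+\tau\psi(\tau)}{1+t\psi(t)}\Bigr)^{2}\frac{\tau}{t}\,e^{-3(F(t)-F(\tau))},
\]
and by Step~1 the genuine exponential factor equals this times $e^{-2\int_{\tau}^{t}h}\in\bigl[1-2\!\int_{\tau}^{\infty}\!h,\,1\bigr]$. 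Using $\tfrac1{\tau^{2}\phi^{*}(\tau)}=\tfrac1{\tau(1+\tau\psi(\tau))}$, the integrand of the middle term of \eqref{AF-lower} is then $\tfrac{1+\tau\psi(\tau)}{t(1+t\psi(t))^{2}}\,e^{-3(F(t)-F(\tau))}$ times that factor.

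The last step is bookkeeping. Writing $e^{-3(F(t)-F(\tau))}=e^{3(F_{\infty}-F(t))}\cdot e^{-3(F_{\infty}-F(\tau))}$, the first factor is $1+O(1/t)$ uniformly in $\tau$, and expanding the three bounded perturbations $\tau\psi(\tau)$, $e^{-3(F_{\infty}-F(\tau))}-1$, $e^{-2\int_{\tau}^{t}h}-1$ under the integral, the leading contribution of $\int_{1}^{t}(1+\tau\psi(\tau))e^{-3(F_{\infty}-F(\tau))}e^{-2\int_{\tau}^{t}h}\,d\tau$ is $t-1$ and every correction term is the integral over $[1,t]$ of a function in $L^{1}(d\tau)$ — the relevant ones being $\tau\psi(\tau)$, which is exactly \eqref{L1 bound}, together with $F_{\infty}-F(\tau)$ and $\int_{\tau}^{\infty}h$ — so the integral equals $t+O(1)$. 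Dividing by $t(1+t\psi(t))^{2}$ and using \eqref{L1 bound} to control $t\psi(t)=t\bigl(\tfrac{\partial f}{\partial t}\bigr)^{*}(t)$, the middle term of \eqref{AF-lower} equals $1-\tfrac1t+O(\tfrac1t)$, which yields the stated bound; the lower bound uses $e^{-2\int_{\tau}^{t}h}\ge1-2\int_{\tau}^{\infty}h$ on the same identity. Finally \eqref{AF-upper} is the verbatim computation with every $\sup_{\Sigma}$ replaced by $\inf_{\Sigma}$, which only interchanges the two inequalities of Step~1. I expect Step~1 to be the crux — replacing $\bigl(2\tfrac{\partial}{\partial s}\ln\phi+3\phi\bigr)^{*}$ by $2\tfrac{d}{ds}\ln\phi^{*}+3\phi^{*}$, which is exactly what hypothesis \eqref{lemma 8} is designed to supply — after which the estimates are of the routine ``weighted $L^{1}\Rightarrow O(1/\tau)$ tail'' kind.
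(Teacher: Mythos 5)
Your proposal is correct and follows essentially the same route as the paper's proof: both replace the exponent $\left(2\frac{\partial}{\partial s}\ln\left(\frac{1}{s}+\frac{\partial f}{\partial s}\right)+3\left(\frac{1}{s}+\frac{\partial f}{\partial s}\right)\right)^{*}$ by $2\frac{d}{ds}\ln\left(\frac{1}{s}+\frac{\partial f}{\partial s}\right)^{*}+3\left(\frac{1}{s}+\frac{\partial f}{\partial s}\right)^{*}$ (the discrepancy being exactly what hypothesis \eqref{lemma 8} places in weighted $L^{1}$), evaluate the resulting model integral in the same closed form $\int_{1}^{t}\frac{(1+\tau\frac{\partial f}{\partial\tau})^{*}}{(1+t\frac{\partial f}{\partial t})^{*2}}e^{-3\int_{\tau}^{t}(\frac{\partial f}{\partial s})^{*}ds}\,d\tau$, and control all corrections by the Tonelli identity $\int_{1}^{\infty}\int_{\tau}^{\infty}h\,ds\,d\tau=\int_{1}^{\infty}(s-1)h\,ds$ together with \eqref{L1 bound}. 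Your Step 1 simply makes explicit, via the envelope identity at the maximizing point, the two-sided comparison that the paper uses implicitly in its splitting into the terms $I$ and $II$, and your treatment of the endpoint factor $(1+t(\frac{\partial f}{\partial t})^{*}(t))^{-2}$ is at the same level of detail as the paper's.
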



\begin{proof}
To show (\ref{AF-lower}), it suffices to show that 
                   \begin{eqnarray*}
                                &  &   \left|
                                                    \int_{1}^{t}  \frac{t}{\tau^{2}}\left(\frac{1}{\tau}+\frac{\partial f}{\partial\tau}\right)^{*-1}
                                                     e^{-\int_{\tau}^{t}\left(2\frac{\partial}{\partial s}\ln\left(\frac{1}{s}+\frac{\partial f}{\partial s}\right)+3\left(\frac{1}{s}+\frac{\partial f}{\partial s}\right)\right)^{*}ds}d\tau-t
                                           \right|\\
                                & \leq &
                                            \left|
                                                   \int_{1}^{t}\frac{t}{\tau^{2}}\left(\frac{1}{\tau}+\frac{\partial f}{\partial\tau}\right)^{*-1}
                                                     e^{-\int_{\tau}^{t}3\left(\frac{1}{s}+\frac{\partial f}{\partial s}\right)^{*}ds}e^{-\int_{\tau}^{t}2\frac{d}{ds}\ln\left(\frac{1}{s}+\frac{\partial f}{\partial s}\right)^{*}ds}d\tau-t
                                           \right|\\
                                          &  &
                                               +\bigg|
                                                       \int_{1}^{t}\frac{t}{\tau^{2}}\left(\frac{1}{\tau}+\frac{\partial f}{\partial\tau}\right)^{*-1}e^{-\int_{\tau}^{t}3\left(\frac{1}{s}+\frac{\partial f}{\partial s}\right)^{*}ds} \\
                                                 && \qquad \times \left(e^{-\int_{\tau}^{t}\left(2\frac{\partial}{\partial s}\ln\left(\frac{1}{s}+\frac{\partial f}{\partial s}\right)\right)^{*}ds}-e^{-\int_{\tau}^{t}2\frac{d}{ds}\ln\left(\frac{1}{s}+\frac{\partial f}{\partial s}\right)^{*}ds}\right)d\tau
                                                 \bigg|\\
                                & = & I+II\\
                                & < & C.
                      \end{eqnarray*}
From (\ref{L1 bound}), there is $t_{0}>1$ such that $\int_{t_{0}}^{\infty}3t\left(\frac{\partial f}{\partial t}\right)^{*}dt<1.$
Using 
\[
                                  \left|e^{\eta}-1\right|\leq2\left|\eta\right|\mbox{ for }\left|\eta\right|\leq1,
 \]
we see
\begin{eqnarray*}
                             I & = &
                                           \left|
                                                          \int_{1}^{t}\frac{t}{\tau^{2}}\left(\frac{1}{\tau}+\frac{\partial f}{\partial\tau}\right)^{*-1}
                                                                   e^{-\int_{\tau}^{t}3\left(\frac{1}{s}+\frac{\partial f}{\partial s}\right)^{*}ds}e^{-\int_{\tau}^{t}2\frac{d}{ds}\ln\left(\frac{1}{s}+\frac{\partial f}{\partial s}\right)^{*}ds}d\tau-t 
                                            \right|\\  
                                       & =&    \left| \int_1^t  
                                       \frac{ \left(1+\tau \frac{\partial f}{\partial \tau}\right)^*}{ \left( 1+ t \frac{\partial f}{\partial t}\right)^{*2}} e^{-\int_{\tau}^{t}3\left(\frac{\partial f}{\partial s}\right)^{*}ds}d\tau-t \right|\\  
                                   & \leq & C+\int_{t_{0}}^{t}\frac{\left(1+\tau\frac{\partial f}{\partial\tau}\right)^{*}}{\left(1+t\frac{\partial f}{\partial t}\right)^{*2}}\left|e^{-\int_{\tau}^{t}3\left(\frac{\partial f}{\partial s}\right)^{*}ds}-1\right|d\tau 
                                   +\int_{t_0}^t 
                                   \left|
                                         \frac{\left(1+\tau\frac{\partial f}{\partial\tau}\right)^{*}}{\left(1+t\frac{\partial f}{\partial t}\right)^{*2}} -1 \right| d\tau
                                   \\
                                   & \leq & C+C\int_{t_{0}}^{t}\int_{\tau}^{t}\left|\frac{\partial f}{\partial s}\right|^{*}dsd\tau \\
                                   &&
                                   +  \frac{1}{\left(1+t \frac{\partial f}{\partial t}\right)^{*2}}  
                                   \int_{t_0}^t 
                                           \left|
                                                \left( \tau \frac{\partial f }{\partial \tau}  \right)^*-2t\left( \frac{\partial f}{\partial t} \right)^*
                                                + t^2 \left( \frac{\partial f}{\partial t} \right)^{*2}
                                           \right|  d\tau
                                   \\
                                   & = & C+C\int_{t_{0}}^{t}\left|\frac{\partial f}{\partial s}\right|^{*}\left(s-t_{0}\right)ds<C.
                             \end{eqnarray*}
Similarly, from (\ref{lemma 8}), there is $t_{0}>1$ such that 
                    \[
                                    \int_{t_0}^{\infty}  
                                    \left|
                                            \left(  
                                                       2\frac{\partial}{\partial s} \ln
                                                               \left(
                                                                         \frac{1}{s}+\frac{\partial f}{\partial s}
                                                               \right)
                                             \right)^*
                                                       -2\frac{d}{ds}\ln
                                                               \left(
                                                                          \frac{1}{s}+\frac{\partial f}{\partial s}
                                                               \right)^*
                                     \right|
                                     ds<1.
                     \]
We see 

                   \begin{eqnarray*}
                  II &=&
                  \bigg|
                                                       \int_{1}^{t}\frac{t}{\tau^{2}}\left(\frac{1}{\tau}+\frac{\partial f}{\partial\tau}\right)^{*-1}e^{-\int_{\tau}^{t}3\left(\frac{1}{s}+\frac{\partial f}{\partial s}\right)^{*}ds} \\
                                                 && \qquad \times \left(e^{-\int_{\tau}^{t}\left(2\frac{\partial}{\partial s}\ln\left(\frac{1}{s}+\frac{\partial f}{\partial s}\right)\right)^{*}ds}-e^{-\int_{\tau}^{t}2\frac{d}{ds}\ln\left(\frac{1}{s}+\frac{\partial f}{\partial s}\right)^{*}ds}\right)d\tau
                                                 \bigg|\\
                       & \leq & 
                               C + C \int_{t_{0}}^{t}\left|e^{-\int_{\tau}^{t}
                                      \left(
                                           2\frac{\partial}{\partial s}\ln
                                                    \left(
                                                             \frac{1}{s}+\frac{\partial f}{\partial s}
                                                    \right)
                                        \right)^*
                                -2\frac{d}{ds}\ln\left(\frac{1}{s}+\frac{\partial f}{\partial s}\right)^{*}ds}-1\right|d\tau\\
                          & \leq & C+C\int_{t_{0}}^{t}\int_{\tau}^{t}
                                               \left(
                                                  \frac{\partial}{\partial s}\ln
                                                          \left(
                                                                 \frac{1}{s}+\frac{\partial f}{\partial s}
                                                          \right)
                                                \right)^*
                                          -\frac{d}{ds}\ln 
                                                  \left(
                                                                 \frac{1}{s}+\frac{\partial f}{\partial s}
                                                    \right)^{*}dsd\tau\\
                          & = & C+C\int_{t_{0}}^{t}
                                                  \left(
                                                         s-t_{0}
                                                   \right)                                                  
                                                   \left(
                                                            \frac{\partial}{\partial s}\ln
                                                                  \left( 
                                                                          \frac{1}{s}+\frac{\partial f}{\partial s}
                                                                   \right)
                                                   \right)^{*}
                                    -\frac{d}{ds}\ln
                                                   \left(
                                                                   \frac{1}{s}+\frac{\partial f}{\partial s}
                                                    \right)^{*}ds<C.
                        \end{eqnarray*}
\end{proof}

\begin{thm}
\label{thm:AF}Let $u\in C^{2+\alpha}(N)$ be a solution of (\ref{eq:u}).
Suppose that  $f\in C^{4+\alpha}(N)$ satisfies 
\begin{equation}
{\displaystyle \left(t\frac{\partial f}{\partial t}\right)^{*}\in L^{1}\left([1,\infty)\right)\,} \label{eq:f}
\end{equation}
and
\begin{equation}  \label{thm 9-lemma 8}
t\left(\frac{\partial}{\partial t}\ln\left(\frac{1}{t}+\frac{\partial f}{\partial t}\right)\right)^{*}-t\frac{d}{dt}\ln\left(\frac{1}{t}+\frac{\partial f}{\partial t}\right)^{*}\in L^{1}\left([1,\infty)\right).
\end{equation}
Further assume  there is a constant $C>0$ such that for all $t\geq2$ and $I_{t}=[t/2,2t]$, 

\begin{equation} \label{eq:decay}
{\displaystyle \left\| \bar{R}t^{2}\right\| _{\alpha,I_{t}}+  \left\| t\frac{\partial f}{\partial t}\right\| _{2+\alpha,I_{t}}  + \left\|1-e^{-2f}\right\|_{2+\alpha,I_{t}}  \leq\frac{C}{t} }
\end{equation}
and
\begin{equation}
\int_{1}^{\infty}|R_{f}-2|^{*}+|\bar{R}t^{2}|^{*}dt<\infty.\label{eq:af}
\end{equation}
Then $\bar{g}$ satisfies the asymptotically flat condition (\ref{AF condition}).
Moreover the Riemannian curvature $\bar{Rm}$ of the $ $3-metric $\bar{g}$
on $N$ is H\"{o}lder continuous and decays as ${\displaystyle \left|\bar{Rm}\right|<\frac{C}{t^{3}}}$,
and the ADM mass of $\left(N,\bar{g}\right)$ can be expressed as 
                \begin{eqnarray*}
                             m_{ADM} 
                                  &=&
                                        \frac{1}{4\pi}\lim_{t\rightarrow\infty}\oint_{S_{t}}\frac{t}{2}
                                                \left(
                                                        1-u^{-2}
                                                 \right)
                                        d\sigma.                  
                \end{eqnarray*}
\end{thm}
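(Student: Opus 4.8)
The plan is to verify the asymptotic flatness condition (\ref{AF condition}), the curvature decay, and the ADM mass formula by first obtaining sharp pointwise bounds $|u^{-2}(t,x)-1|\le C/t$ from the $C^0$ estimates of Proposition \ref{prop: C0bounds}, then upgrading to derivative bounds via the dilation-invariant Schauder estimates of Proposition \ref{prop:interior est}, and finally computing the metric components and their curvature in the explicit coordinates in which $\bar g = u^2\,dt^2 + t^2 e^{2f}\sigma$. The key preliminary observation is that in coordinates $r=t$ on $[1,\infty)$ and the standard sphere, $\delta_{ab}$ in (\ref{AF condition}) refers to the flat metric $dr^2 + r^2\sigma$, so the task is to show $|u^2-1|$, $|t^2 e^{2f}-t^2|$ decay like $C/t$ together with their first derivatives scaled by $t$.

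First I would establish that $w=u^{-2}\to 1$ with rate $C/t$. Apply the $C^0$ bounds of Proposition \ref{prop: C0bounds}: $w$ is trapped between $\delta_*(t)$-type and $\delta^*(t)$-type expressions plus exponentially decaying initial-data terms. The hypothesis $\int_1^\infty |R_f-2|^* + |\bar R t^2|^* \,dt < \infty$ together with (\ref{eq:f}) shows that $R_f - t^2\bar R$ is close to $2$ in an integrated sense, and the foliation terms in the exponential are controlled by (\ref{eq:f}) and (\ref{thm 9-lemma 8}). The point is that Lemma \ref{lem: af ineq} was tailored precisely so that replacing $(R_f - \tau^2\bar R)$ by $2$ in the integral defining $\delta_*,\delta^*$ yields $1 + O(1/t)$; the difference between the true integrand and this reference is then estimated by pulling $|R_f - \tau^2 \bar R - 2|$ out and using its $L^1$ bound against the bounded exponential kernel, giving a correction of size $O(1/t)$ by the same Fubini-type argument used in the proof of Lemma \ref{lem: af ineq}. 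Hence $|w - 1|\le C/t$, and since $w$ is bounded away from $0$ and $\infty$ this gives $|u^2 - 1| \le C/t$ as well.

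Next I would obtain the decay of derivatives. Here the dilation trick is essential: set $\tilde u(s,x) = u(ts, x)$ and rescale on the fixed interval $s\in[1/2,2]$, where the rescaled source functions satisfy, by (\ref{eq:decay}), uniform $C^\alpha$ (resp.\ $C^{2+\alpha}$) bounds independent of $t$, and the rescaled solution is trapped in $[\delta_0, \delta_0^{-1}]$ by the $C^0$ bounds just proved. Proposition \ref{prop:interior est} then gives $\|\tilde u\|_{2+\alpha, [3/4, 3/2]} \le C$ uniformly in $t$; undoing the scaling converts this into $t|\partial_t u| + t|\nabla u| \le C$ and, more sharply, by applying the same argument to $m = \tfrac{t}{2}(1-u^{-2})$ using (\ref{est m}) and the fact that $\|1 - \tfrac12 R_f\|_\alpha$, $\|t\partial_t f\|$, $\|\bar R\|$ are all $O(1/t)$ on $I_t$, one gets $\|m\|_{2+\alpha}\le C$ on the rescaled interval, hence $|m| + t|\partial_t m| + t|\nabla m| \le C$ in the original variables, i.e.\ $|u^{-2}-1| \le C/t$ with matching derivative decay $t|\partial u^{-2}| \le C/t$. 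Combining with $|1-e^{-2f}|_{2+\alpha, I_t}\le C/t$ from (\ref{eq:decay}), the metric coefficients $\bar g_{ab}$ in the coordinates above satisfy (\ref{AF condition}).

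For the curvature estimate, I would use the curvature formula (\ref{eq:R}) together with the Gauss and radial-Ricci identities recorded in Section \ref{sec:1}: all curvature components of $\bar g$ are algebraic expressions in $u^{\pm1}$, $\partial u$, $\partial^2 u$, $f$ and its derivatives up to second order, and $R_f$; the rescaled $C^{2+\alpha}$ bounds give $|\partial^2 u| \le C/t$ and the curvature of a metric of the form $r^2(\text{round} + O(1/r))$ on the slices is $O(1/r^2)$, while the full three-dimensional curvature carries an extra $1/t$ from the mixed $t$-derivative terms, yielding $|\bar{Rm}| \le C/t^3$; Hölder continuity follows because the $C^{2+\alpha}$ bounds are uniform in the rescaled pictures. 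Finally, the ADM mass: in these coordinates the standard flux integral $\tfrac{1}{16\pi}\oint (\partial_j \bar g_{ij} - \partial_i \bar g_{jj})\nu^i$ reduces, after inserting $\bar g_{tt} = u^2$ and $\bar g_{ij} = t^2 e^{2f}\sigma_{ij}$ and using the decay of $f$ to kill its contributions, to $\tfrac{1}{4\pi}\lim_{t\to\infty}\oint_{S_t} \tfrac{t}{2}(1 - u^{-2})\,d\sigma$, which is exactly the claimed expression; the limit exists and is finite because $m = \tfrac{t}{2}(1-u^{-2})$ is bounded and, by its own equation (\ref{eq:m}) integrated against the decay hypotheses, converges as $t\to\infty$. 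The main obstacle I anticipate is the derivative-decay step: making the dilation-rescaling argument deliver the sharp $C/t$ (not merely $o(1)$) rate uniformly requires that the rescaled source functions have $C^{2+\alpha}$ norms that are $O(1/t)$ on the fixed interval — this is exactly what (\ref{eq:decay}) supplies — and then carefully tracking how the Schauder constant in Proposition \ref{prop:interior est} depends only on the scale-invariant quantities $f_0, \delta_0, \|\nabla f\|_0$, all of which remain bounded; handling the coupling in the $m$-equation, where the zeroth-order coefficient and inhomogeneity both involve $\partial^2_t f$, is the delicate bookkeeping.
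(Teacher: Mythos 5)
Your proposal follows essentially the same route as the paper's proof: the $C^{0}$ bounds of Proposition \ref{prop: C0bounds} combined with Lemma \ref{lem: af ineq} and the $L^{1}$ hypothesis (\ref{eq:af}) give $|u^{-2}-1|\leq C/t$; the dilation to a fixed interval plus the Schauder estimates for $m$ (with (\ref{eq:decay}) controlling the rescaled source terms, including $\|1-\tfrac12 R_f\|$ via the conformal formula $R_f/2=e^{-2f}(1-\Delta_\sigma f)$) give the uniform $\|m\|_{2+\alpha}$ bound and hence the derivative decay; the curvature decay follows from the $\nabla^{2}u$ estimate (with dimension three reducing $\bar{Rm}$ to $\bar{Rc}$); and the ADM flux integral reduces to the stated expression with the limit's existence obtained by integrating the $m$-equation. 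This matches the paper's argument step for step, so no further comparison is needed.
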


\begin{proof}
By Lemma \ref{lem: af ineq} and the decay conditions (\ref{eq:f}), (\ref{thm 9-lemma 8}), and (\ref{eq:af}), we have 
\begin{equation*}
1-\frac{C}{t}\leq\delta_{*}(t)\leq\delta^{*}(t)\leq1+\frac{C}{t}\label{af}
\end{equation*}
for all $t\geq t_{0,}$ for some $t_{0}$ large.

Define $\tilde{u}(t,x)=u(\tau t,x)$. Observe that $u\in C^{2+\alpha}(A_{[\tau/2,2\tau]})$
satisfies (\ref{eq:u}) if and only if $\tilde{u}$ on the interval
$[1/2,2]$ satisfies\begin{eqnarray*}
\left(\frac{1}{t}+\frac{\partial\tilde{f}}{\partial t}\right)\frac{\partial\tilde{u}}{\partial t} & = & \frac{1}{2t^{2}}\tilde{u}^{2}\Delta_{f}\tilde{u}+\left[\frac{\partial}{\partial t}\left(\frac{1}{t}+\frac{\partial\tilde{f}}{\partial t}\right)+\frac{3}{2}\left(\frac{1}{t}+\frac{\partial\tilde{f}}{\partial t}\right)^{2}\right]\tilde{u}\\
 &  & -\frac{1}{4t^{2}}\left(\tilde{R}_{f}-t^{2}\tilde{R}\right)\tilde{u}^{3},\end{eqnarray*}
where \[
\tilde{f}(t,x)=f(\tau t,x),\tilde{\, R}_{f}(t,x)=R_{f}(\tau t,x),\,\hbox{and}\,\tilde{R}(t,x)=\tau^{2}\bar{R}(\tau t,x).\]
Applying Proposition \ref{prop:interior est} to $\tilde{u}$
on the interval $[1/2,2]$ and then rescaling back, we obtain the estimates
$||u||_{2+\alpha,I_{t}}\leq C$. Also, $m$ scales as $\tilde{m}(t,x)=m(\tau t,x)/\tau$. The scaling argument and (\ref{est m}) yield
\begin{eqnarray}
||m||_{2+\alpha,I'_{t}} & \leq & Ct\left(\left\| \frac{\partial f}{\partial t}\right\| _{\alpha,I_t}+\left\| \frac{\partial^{2}f}{\partial t^{2}}\right\| _{\alpha,I_t}+\left\| 1-\frac{1}{2}R_{f}\right\| _{\alpha,I_t}+\left\| \tau^{2}\bar{R}\right\| _{\alpha,I_t}\right)\nonumber\\
 &  & +C\left\| m\right\| _{0,I_t} \label{rescaling m},
\end{eqnarray}
where $I'_{t}=[t,2t]$, $I_t=[t/2,2t]$, and $C$
is some constant independent of $u$ and $t.$ 
The bounds (\ref{af}) control $\left\| m\right\| _{0,I_t}$. Under conformal change, the scalar curvature $R_f$ satisfies
$$
 \frac{R_f}{ 2}  =e^{-2f}\left(1-\Delta_{\sigma} f \right).
$$
Thus the decay condition
(\ref{eq:decay}) controls the remaining terms, and 
\[
\left\| m\right\| _{2+\alpha,I'_{t}}\leq C
\]
is uniformly bounded for all $t\geq1.$ Expressing this in terms of
$u$ and derivatives of $u$ gives\[
\left\| 1-u^{-2}\right\| _{\alpha,I'_{t}}+\left\| t\partial_{t}u\right\| _{\alpha,I'_{t}}+\left\| \nabla u\right\| _{\alpha,I'_{t}}+\left\| \nabla^{2}u\right\| _{\alpha,I'_{t}}\leq\frac{C}{t},\]
which shows that $\bar{g}$ is asymptotically Euclidean. The estimate for $\nabla^2u$ shows that the Ricci curvature on $N$,  $\bar{Rc} \in C^{0,\alpha}(N)$ and $\left| \bar{Rc} \right| \leq C/t^3.$ Since $N$  is of dimension $3$, this controls the full curvature tensor.

From \cite{Bar86} the ADM mass is uniquely defined. We compute the
flux integral in terms of spherical coordinates. 
\begin{eqnarray*}
              m_{ADM}
                    & =& \frac{1}{16\pi}
                                   \oint_{S_{\infty}}
                                        \left(
                                                 \dot{g}^{ab}\dot{\nabla}_a\bar{g}_{bc} - \dot{\nabla}_{c}
                                                           \left(
                                                                 \dot{g}^{ab}\bar{g}_{ab}
                                                            \right)
                                         \right)
                                  d\sigma^c\\
                  &=& \frac{1}{16\pi} \lim_{t\rightarrow \infty}
                                    \oint_{S_t}    
                                                \left(
                                                         \dot{g}^{ab}\dot{\nabla}_a\bar{g}_{b3} - \dot{\nabla}_{3}
                                                                 \left(
                                                                        \dot{g}^{ab}\bar{g}_{ab}
                                                                  \right)
                                                 \right)u^{-2}t^2
                                    d\sigma,      
 \end{eqnarray*}
where $\dot{g}=dt^{2}+t^{2}\sigma$ is the flat metric, $\dot{\nabla}$
denotes covariant derivative of $\dot{g}$, and  $d\sigma$ is the area element on $S^{2}$. For $i,j=1,2$, the connections
of $\dot{g}$ are $\dot{\Gamma}_{33}^{3}=0$, ${\displaystyle \dot{\Gamma}_{ij}^{3}=-\frac{1}{t}\dot{g}_{ij}}$,
and ${\displaystyle \dot{\Gamma}_{3j}^{i}=\frac{1}{t}\delta_{j}^{i}}$.
Moreover\begin{eqnarray*}
\dot{g}^{ab}\dot{\nabla}_{a}\bar{g}_{b3} & = & \dot{g}^{33}\dot{\nabla}_{3}\bar{g}_{33}+\dot{g}^{ij}\dot{\nabla}_{i}\bar{g}_{j3}\\
 & = & \dot{g}^{33}\partial_{3}\bar{g}_{33}-\dot{g}^{ij}\dot{\Gamma}_{ij}^{3}\bar{g}_{33}-\dot{g}^{ij}\dot{\Gamma}_{i3}^{k}\bar{g}_{jk}\\
 & = & 2u\frac{\partial u}{\partial t}+\frac{2}{t}u^{2}-\frac{1}{t}\dot{g}^{jk}\bar{g}_{jk}\\
 & = & 2u\frac{\partial u}{\partial t}+\frac{2}{t}u^{2}-\frac{2}{t}e^{2f}\end{eqnarray*}
and\[
\dot{\nabla}_{3}\left(\dot{g}^{ab}\bar{g}_{ab}\right)=2u\frac{\partial u}{\partial t}+4e^{2f}\frac{\partial f}{\partial t}.\]
Hence 
 \begin{eqnarray*}
                      \left(
                                  \dot{g}^{ab}\dot{\nabla}_a\bar{g}_{b3} - \dot{\nabla}_{3}
                                                \left(
                                                         \dot{g}^{ab}\bar{g}_{ab}
                                                 \right)
                       \right)u^{-2}t^2 
                          &=&     4   \left(
                                                              \frac{t}{2}-\frac{t}{2}e^{2f}u^{-2} - t^{2}e^{2f}\frac{\partial   f}{\partial t}  u^{-2}
                                                   \right) \\
                           &=&  4 \left( 
                                                 m -\frac{t}{2}  \left( e^{2f}-1 \right) u^{-2} - t^{2}e^{2f}\frac{\partial   f}{\partial t}  u^{-2}
                                         \right)
\end{eqnarray*}
From the decay condition  (\ref{eq:decay}), we see that $f\rightarrow 0$ as $t$ approaches $\infty$.
Since $t\frac{\partial f }{\partial t} \in L^1\left([1,\infty)\right)$ and $u^{-2}$ is bounded, we find that both
$\frac{t}{2}  \left( e^{2f}-1 \right) u^{-2}$ and $t^{2}e^{2f}\frac{\partial   f}{\partial t}  u^{-2}$ approach zero as $t$ approaches $\infty$. Now the mass integral reduces to 
\begin{eqnarray*}
      m_{ADM}   &=&   \frac{1}{4\pi}\lim_{t\rightarrow\infty}   \oint_{S_{t}} m   d\sigma.
\end{eqnarray*}

To show that the
limit exists, it suffices to show the following is integrable on $\left(t_{0},\infty\right)$,
\begin{eqnarray*}
   &&  \oint_{S_{t}}
                          \left(
                                          1+t\frac{\partial f}{\partial t}
                           \right)  
             \frac{\partial m}{\partial t}
                           d\sigma \\
         &=& \oint_{S_t}  \frac{u^2}{2t^2}\Delta_{f}m+\frac{3u}{2t^{2}}\nabla u\cdot\nabla m 
         -\left(2\frac{\partial^{2}f}{\partial t^{2}}+\frac{5}{t}\frac{\partial f}{\partial t}+3\left(\frac{\partial f}{\partial t}\right)^{2}\right)m\nonumber \\
 &  & -\frac{1}{4t}\left(R_{f}-2-t^{2}\bar{R}-4t^{2}\frac{\partial^{2}f}{\partial t^{2}}-12t\frac{\partial f}{\partial t}-6\left(t\frac{\partial f}{\partial t}\right)^{2} \right) d\sigma.
 \end{eqnarray*}
From decay conditions (\ref{eq:f}), (\ref{eq:decay}),
and (\ref{eq:af}), it can be shown that the right hand side of the
the equation is integrable on $\left(t_0,\infty\right)$ for some
large $t_0$ since $m$ and $\nabla m$ are bounded. 
\end{proof}


Theorem \ref{thm:existence} shows the existence of the solution of
the initial value problem (\ref{eq:u}) and (\ref{eq:initial}). 
Theorem \ref{thm:AF} shows the asymptotical flatness of $\bar{g}=u^2 dt^2+t^2e^{2f}\sigma.$
They together give us Theorem \ref{thm:M1}. Next we prove Theorem
\ref{thm:M2}. Since the mean curvature 
$\displaystyle H=\frac{2}{u}
                 \left(
                            \frac{1}{t}+\frac{\partial f}{\partial t}
                 \right)$
and ${\displaystyle \frac{1}{t}+\frac{\partial f}{\partial t}>0},$
in order to have minimal boundary surface $\Sigma$, we need to solve
for $u$ so that the initial condition $u^{-1}(1,\cdot)=0$. 


\begin{thmM2}
Let $t\frac{\partial f}{\partial t}  \in C^{2+\alpha}\left(N\right)$ and $\bar{R} \in  C^{\alpha}\left(N\right)$ be given such that $\delta_*(t)$ and $\delta^*(t)$ defined in Proposition \ref{prop: C0bounds} are finite on $[1,\infty)$. Further suppose that for all $1\leq t<\infty$, 
\[
0<1+t\frac{\partial f}{\partial t} <\infty,
\]
and 
\[
t^{2}\bar{R}<R_{f}.
\]
Then there is $u^{-1}\in C^{2+\alpha}\left(A_{(1,\infty)}\right)$
such that the metric $\bar{g}$ on $N$ has curvature uniformly bounded
on $A_{[1,2]}$ with totally geodesic boundary. Let $0<\eta<1$ be
such that 
\begin{equation}
1-\eta<R_{f}-t^{2}\bar{R}|_{t=1}<\left(1-\eta\right)^{-1}.\label{bdy est}
\end{equation}
Then there is $t_{0}>1$ such that $1<t<t_{0},$
\[
\frac{t-1}{t}\left(1-\eta\right)<u^{-2}\left(t\right)<\frac{t-1}{t}\left(1-\eta\right)^{-1},
\]
which gives 
\[
1-\frac{\eta}{1-\eta}\left(t-1\right)\leq2m\leq1+\eta\left(t-1\right).
\]
\end{thmM2}

\begin{proof}

We introduce the scaling transformation
\[
\tilde{u}\left(t\right)=\sqrt{\frac{t}{t+1}}u\left(t+1\right)\mbox{ where }t\in\left(0,\infty\right).
\]
The evolution equation for $\tilde{u}$ is 
\begin{eqnarray}
             \left(
                        \frac{1}{t}+\frac{\partial\tilde{f}}{\partial t}
             \right)
                      \frac{\partial\tilde{u}}{\partial t} 
                   & = & \frac{1}{2t^{2}}\tilde{u}^{2}\Delta_{f}\tilde{u}+
                            \left(
                                   \frac{\partial}{\partial t}\left(\frac{1}{t}+\frac{\partial\tilde{f}}{\partial t}\right)+\frac{3}{2}
                                           \left(
                                                   \frac{1}{t}+\frac{\partial\tilde{f}}{\partial t}
                                          \right)
                                           \left(
                                                   \frac{1}{t}+\frac{t}{t+1}\frac{\partial\tilde{f}}{\partial t}
                                            \right)
                               \right)\tilde{u}\label{eq:tilde{u}}\nonumber \\
 &  & -\frac{1}{4t^{2}}\left(\tilde{R}_{f}-t^{2}\tilde{R}\right)\tilde{u}^{3}
\end{eqnarray}
where 
\begin{eqnarray*}
\frac{\partial\tilde{f}}{\partial t}\left(t\right) & = & \frac{t+1}{t}\frac{\partial f}{\partial t}\left(t+1\right),\\
\tilde{R}_{f}\left(t\right) & = & R_{f}\left(t+1\right),\\
t^{2}\tilde{R}\left(t\right) & = & \left(t+1\right)^{2}\bar{R}\left(t+1\right).
\end{eqnarray*}
 Observe that the $\tilde{u}$-equation has a similar form as the $u$-equation (\ref{eq:u}). 
 The assumptions $0<1+t\frac{\partial f}{\partial t} <\infty$ and  $t^2\bar{R}<R_f$ imply that  
\begin{eqnarray*}
                            K
                                &=&\sup_{0< t<\infty}\left\{ -\int_{0}^{t}\frac{1}{2\tau^{2}}\left(\frac{\tilde{R}_{f}-\tau^{2}\tilde{R}}{\frac{1}{\tau}+\frac{\partial\tilde{f}}{\partial\tau}}\right)_{*}
                                     e^{\int_{1}^{\tau}\left(2\frac{\partial}{\partial s}\ln\left(\frac{1}{s}+\frac{\partial\tilde{f}}{\partial s}\right)+3\left(\frac{1}{s}+ \frac{s}{s+1}   \frac{\partial\tilde{f}}{\partial s}\right)\right)^{*}ds}d\tau\right\} \\
                                     &=&0.
\end{eqnarray*}

By the existence theorem, it can be shown that for any positive initial condition $\tilde{u}(t_0,\cdot)=\varphi(\cdot), t_0>0$, the solution $u^{-2}(t,x)>0$ satisfying $\tilde{u}(t_0,\cdot)=\varphi(\cdot)$ exists on $[t_0,\infty)$. Let $\varphi_{\epsilon} \in C^{2,\alpha}(\Sigma),0<\epsilon<1$, be a family of functions satisfying 
$$ 
      \tilde{\delta}_*(\epsilon) \leq \varphi_{\epsilon}^{-2} \leq \tilde{\delta}^*(\epsilon),
$$      
where 
\[
\tilde{\delta}_{*}\left(t\right)=\int_{0}^{t}\frac{1}{2\tau^{2}}\left(\frac{\tilde{R}_{f}-\tau^{2}\tilde{R}}{\frac{1}{\tau}+\frac{\partial\tilde{f}}{\partial t}}\right)_{*}e^{-\int_{\tau}^{t}\left(2\frac{\partial}{\partial s}\ln\left(\frac{1}{s}+\frac{\partial\tilde{f}}{\partial s}\right)+3\left(\frac{1}{s}+\frac{s}{s+1}\frac{\partial\tilde{f}}{\partial s}\right)\right)^{*}ds}d\tau
\]
and 
\[
\tilde{\delta}^{*}\left(t\right)=\int_{0}^{t}\frac{1}{2\tau^{2}}\left(\frac{\tilde{R}_{f}-\tau^{2}\tilde{R}}{\frac{1}{\tau}+\frac{\partial\tilde{f}}{\partial t}}\right)^{*}e^{-\int_{\tau}^{t}\left(2\frac{\partial}{\partial s}\ln\left(\frac{1}{s}+\frac{\partial\tilde{f}}{\partial s}\right)+3\left(\frac{1}{s}+\frac{s}{s+1}\frac{\partial\tilde{f}}{\partial s}\right)\right)_{*}ds}d\tau.
\]
Applying the rescaling argument and the Schauder estimates to $\tilde{u}$, we can obtain an uniform bound 
$$
 \left\Vert \tilde{u} \right\Vert_{2+\alpha,I}<C,
$$
where $C$ is independent of $\tilde{u}$ and $I$ is a compact interval in $\mathbb{R}^+$. Applying Arzela-Ascoli Theorem, we thus obtain a solution $\tilde{u}^{-1}\in C^{2+\alpha}\left(A_{(0,\infty)}\right)$
bounded by 
\[
0<\tilde{\delta}_{*}\left(t\right)\leq\tilde{u}^{-2}\leq\tilde{\delta}^{*}\left(t\right).
\]
It follows from the definition of the $C^{k+\alpha}$ norm that 
\[
\left\Vert \tilde{R}_f\right\Vert _{2+\alpha,\left[a,b\right]}\leq\left\Vert R_f\right\Vert _{2+\alpha,\left[a+1,b+1\right]}
\]
for any $0\leq a<b$, and we find that
\begin{eqnarray*}
\frac{\partial\tilde{f}}{\partial t} & \in & C^{2+\alpha}\left(A_{[0,\infty)}\right),\\
\tilde{R}_{f} & \in & C^{\alpha}\left(A_{[0,\infty)}\right),\\
t^{2}\tilde{R} & \in & C^{\alpha}\left(A_{[0,\infty)}\right).
\end{eqnarray*}
Since $0<1+\left(t+1\right)\frac{\partial f}{\partial t}\left(t+1\right)<\infty$
for all $t>0$, $  \left| \frac{t}{t+1}\frac{\partial\tilde{f}}{\partial t}\right| = \left| \frac{\partial f}{\partial t}(t+1) \right|<C$.
So $\tilde{\delta}_{*}\left(t\right)$ and $\tilde{\delta}^{*}\left(t\right)$
can be estimated on $\left(0,\epsilon\right),$ for some small $\epsilon>0$,
using (\ref{bdy est}): 
\[
(1-\eta)\leq\tilde{\delta}_{*}\left(t\right)<\tilde{\delta}^{*}\left(t\right)<\left(1-\eta\right)^{-1},
\]
which gives the bounds on $u^{-2}$ and $m$, and also shows that
for $0<t<\epsilon$, 
\[
-\frac{\eta t}{1-\eta}<2\tilde{m}\left(t\right)<\eta t,
\]
where $\tilde{m}\left(t\right)=\frac{t}{2}\left(1-\tilde{u}^{-2}\left(t\right)\right)=m\left(t+1\right)-1/2.$
The rescaling estimate (\ref{rescaling m}) applied to $\tilde{m}$ shows that the covariant
derivatives of $m$ decay,
\[
\left|\nabla m\left(t\right)\right|+\left|\nabla^{2}m\left(t\right)\right|\leq C\left(t-1\right),
\]
from which it follows that the curvature of $\bar{g}$ is bounded on $A_{\left[1,2\right]}.$

\end{proof}

\section{Under Ricci Flow Foliation}

Let $\left(\Sigma,g_{1}\right)$ be a given 2-sphere with area $A( \Sigma )=4\pi$, and $N=[1,\infty)\times\Sigma$
equipped with the metric \[
\bar{g}=u^{2}dt^{2}+t^{2}g_{ij}(t,x)dx^{i}dx^{j},\]
where $g_{ij}(t,x)$ is the solution of the modified Ricci flow. Direct
computation shows that the metric $\bar{g}=u^{2}dt^{2}+t^{2}g(t,x)$
has the scalar curvature $\bar{R}$ if and only if $u$ satisfies the
parabolic equation (\ref{eq:RFu}) \[
t\frac{\partial u}{\partial t}=\frac{1}{2}u^{2}\Delta u+\frac{t^{2}}{4}\left|M\right|^{2}u+\frac{1}{2}u-\frac{1}{4}\left(R-t^{2}\bar{R}\right)u^{3},
\]
where $\Delta$ is the Laplacian with respect to $g$, $R$ and $\bar{R}$ are the scalar curvatures with respect to $g$ and $\bar{g}$ respectively, and $\left| M  \right|^2 = M_{ij}M_{kl}g^{ik}g^{jl}$.  
In this section we will prove existence results, Theorem \ref{thm:M3}
and Theorem \ref{thm:M4}.
\begin{lem}
\label{lem:bds}Suppose $u\in C^{2+\alpha}(A_{[t_{0},t_{1}]})$, $1\leq t_{0}<t_{1}$,
is a positive solution. If we further assume that $\bar{R}$ is defined
on $A_{[1,\infty)}$ such that the functions
\[
{\displaystyle \delta_{*}(t)}=\frac{1}{t}\int_{1}^{t}\left(\frac{R}{2}-\frac{\tau^{2}}{2}\bar{R}\right)_{*}(\tau)\exp\left(-\int_{\tau}^{t}\frac{s{|M|^{*}}^{2}}{2}ds\right)d\tau\]
and
\[
\delta^{*}(t)=\frac{1}{t}\int_{1}^{t}\left(\frac{R}{2}-\frac{\tau^{2}}{2}\bar{R}\right)^{*}(\tau)\exp\left(-\int_{\tau}^{t}\frac{s|M|_{*}^{2}}{2}ds\right)d\tau\]
are defined and finite for all $t\in[t_{0},\infty)$. 
Then for $t_{0}\leq t\leq t_{1}$, we have\begin{equation}
u^{-2}(t,x)\geq\delta_{*}(t)+\frac{t_{0}}{t}\left({u^{*}(t_{0})}^{-2}-\delta_{*}(t_{0})\right)\exp\left(-\int_{t_{0}}^{t}\frac{s|M|^{*2}}{2}ds\right)\label{eq:RFup}\end{equation}
and
\begin{equation}
u^{-2}(t,x)\leq\delta^{*}(t)+\frac{t_{0}}{t}\left({u_{*}(t_{0})}^{-2}-\delta^{*}(t_{0})\right)\exp\left(-\int_{t_{0}}^{t}\frac{s|M|_{*}^{2}}{2}ds\right).\label{eq:RFlb}\end{equation}
\end{lem}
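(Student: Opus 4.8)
The plan is to imitate the maximum-principle argument of Proposition~\ref{prop: C0bounds}. Put $w=u^{-2}$; by (\ref{eq:RFw}) it is a positive $C^{2+\alpha}$ solution on $A_{[t_{0},t_{1}]}$ of
\[
t\partial_{t}w=\frac{1}{2w}\Delta w+\frac{3}{2}u\nabla u\cdot\nabla w-\Bigl(\frac{t^{2}}{2}|M|^{2}+1\Bigr)w+\frac{R}{2}-\frac{t^{2}}{2}\bar{R}.
\]
Since $u>0$ and $u\in C^{2+\alpha}$, the functions $w_{*}(t)=\inf_{x}w(t,x)$ and $w^{*}(t)=\sup_{x}w(t,x)$ are locally Lipschitz. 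By the standard first-variation argument for the infimum of a smooth family over the compact surface $\Sigma$ (Hamilton's trick), at almost every $t$ there is a spatial minimizer $x_{t}$ with $\nabla w(t,x_{t})=0$, $\Delta w(t,x_{t})\geq0$, and $w_{*}'(t)=\partial_{t}w(t,x_{t})$. Evaluating the equation at $x_{t}$, discarding the Laplacian and gradient terms (using $w(t,x_{t})=w_{*}(t)>0$, so they carry the favorable sign), and bounding the zeroth-order coefficients by their spatial sup/inf, one gets
\[
tw_{*}'(t)\geq-\Bigl(\frac{t^{2}}{2}|M|^{*2}+1\Bigr)w_{*}(t)+\Bigl(\frac{R}{2}-\frac{t^{2}}{2}\bar{R}\Bigr)_{*},
\]
and symmetrically, at a spatial maximizer (where $\Delta w\leq0$),
\[
tw^{*\prime}(t)\leq-\Bigl(\frac{t^{2}}{2}|M|_{*}^{2}+1\Bigr)w^{*}(t)+\Bigl(\frac{R}{2}-\frac{t^{2}}{2}\bar{R}\Bigr)^{*}.
\]

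Next I would integrate these linear differential inequalities. Dividing the first by $t$ produces a linear inequality for $w_{*}$ with integrating factor $\frac{t}{t_{0}}\exp\!\bigl(\int_{t_{0}}^{t}\frac{s|M|^{*2}}{2}ds\bigr)$; integrating from $t_{0}$ to $t$ and comparing with the solution of the corresponding ODE (exactly as in Proposition~\ref{prop: C0bounds}) yields
\[
w_{*}(t)\geq\frac{1}{t}\int_{t_{0}}^{t}\Bigl(\frac{R}{2}-\frac{\tau^{2}}{2}\bar{R}\Bigr)_{*}e^{-\int_{\tau}^{t}\frac{s|M|^{*2}}{2}ds}\,d\tau+\frac{t_{0}}{t}\,w_{*}(t_{0})\,e^{-\int_{t_{0}}^{t}\frac{s|M|^{*2}}{2}ds},
\]
and the same computation, with $|M|_{*}$ in place of $|M|^{*}$ and the inequalities reversed, gives the matching upper bound for $w^{*}(t)$.

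Finally I would recast these in the stated form. Additivity of the exponent gives $\frac{1}{t}\int_{1}^{t_{0}}\bigl(\frac{R}{2}-\frac{\tau^{2}}{2}\bar{R}\bigr)_{*}e^{-\int_{\tau}^{t}\frac{s|M|^{*2}}{2}ds}d\tau=\frac{t_{0}}{t}\delta_{*}(t_{0})e^{-\int_{t_{0}}^{t}\frac{s|M|^{*2}}{2}ds}$, hence $\frac{1}{t}\int_{t_{0}}^{t}\bigl(\frac{R}{2}-\frac{\tau^{2}}{2}\bar{R}\bigr)_{*}e^{-\int_{\tau}^{t}\frac{s|M|^{*2}}{2}ds}d\tau=\delta_{*}(t)-\frac{t_{0}}{t}\delta_{*}(t_{0})e^{-\int_{t_{0}}^{t}\frac{s|M|^{*2}}{2}ds}$; substituting this into the previous display and using $w_{*}(t_{0})=(\sup_{x}u(t_{0},\cdot))^{-2}=u^{*}(t_{0})^{-2}$ produces exactly (\ref{eq:RFup}), and the analogous manipulation with $\delta^{*}$ and $w^{*}(t_{0})=u_{*}(t_{0})^{-2}$ gives (\ref{eq:RFlb}). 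The only delicate point is the justification of the differential inequalities for the merely Lipschitz functions $w_{*}$ and $w^{*}$ — that is, Hamilton's trick together with the interior maximum principle applied to an equation whose leading coefficient $\frac{1}{2w}$ is quasilinear — but this is routine once positivity of $w$ on the compact slab $A_{[t_{0},t_{1}]}$ is invoked; the remaining steps are the bookkeeping already performed in Proposition~\ref{prop: C0bounds}.
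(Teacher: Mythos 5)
Your proposal is correct and follows essentially the same route as the paper: apply the maximum principle to the equation (\ref{eq:RFw}) for $w=u^{-2}$ at the spatial extrema, integrate the resulting linear differential inequalities with the integrating factor $\frac{t}{t_{0}}\exp\bigl(\int_{t_{0}}^{t}\frac{s|M|^{*2}}{2}ds\bigr)$, and rewrite the answer in terms of $\delta_{*}$, $\delta^{*}$ via the semigroup property of the exponential weight. The only difference is that you spell out Hamilton's trick and the bookkeeping identity $\frac{1}{t}\int_{1}^{t_{0}}(\cdot)_{*}e^{-\int_{\tau}^{t}}d\tau=\frac{t_{0}}{t}\delta_{*}(t_{0})e^{-\int_{t_{0}}^{t}}$, which the paper leaves implicit.
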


\begin{proof}
Apply the maximum principle to (\ref{eq:RFw}) 
\begin{equation*}
                  t\partial_{t}w=\frac{1}{2w}\Delta w+\frac{3}{2} u \nabla u \cdot \nabla w-\left(\frac{t^{2}}{2}|M|^{2}+1\right)w+\frac{R}{2}-\frac{t^{2}}{2}\bar{R}.
     \end{equation*}
We have, at the maximum of $u(t,x)$,
\[
t\frac{dw_{*}}{dt}\geq-\left(\frac{t^{2}}{2}|M|^{2}+1\right)^{*}w_{*}+\left(\frac{R}{2}-\frac{t^{2}}{2}\bar{R}\right)_{*}.\]
Solving the associated ordinary differential equation,\begin{eqnarray*}
u^{-2} & \geq & \frac{1}{t}\int_{t_{0}}^{t}\left(\frac{R}{2}-\frac{\tau{}^{2}}{2}\bar{R}\right)_{*}\exp(-\int_{\tau}^{t}\frac{s{|M|^{*}}^{2}}{2}ds)d\tau\\
 &  & +\frac{1}{t}t_{0}w_{*}(t_{0})\exp\left(-\int_{t_{0}}^{t}\frac{s{|M|^{*}}^{2}}{2}ds\right)\\
 & = & \delta_{*}(t)+\frac{t_{0}}{t}\left(w_{*}(t_{0})-\delta_{*}(t_{0})\right)\exp\left(-\int_{t_{0}}^{t}\frac{s{|M|^{*}}^{2}}{2}ds\right).\end{eqnarray*}
Similarly, applying the maximum principle to $w^{*}$, we get the
upper bound of $u^{-2}.$ 
\end{proof}


Since $M$ converges to $0$ exponentially, we can obtain the interior
Schauder estimates. Let $I=[1,t_{1}]$ and $I'=[t_{0},t_{1}]$ with
$1 < t_{0}<t_{1}$. For a solution $u\in C^{2+\alpha}\left(A_{I}\right)$
with a source function $\bar{R}\in C^{\alpha}\left(A_{I}\right)$ and satisfying \[
0<\delta_{0}\leq u^{-2}\left(t,x\right)\leq\delta_{0}^{-1}\quad\hbox{for\, all\,}\left(t,x\right)\in A_{I},\]
for some constant $\delta_{0},$ we have
\begin{eqnarray}
\left\| u\right\| _{2+\alpha',I'}\leq C\left(t_{0},t_{1},\delta_{0},\left\| R\right\| _{\alpha,I},\left\| M\right\| _{\alpha,I},\left\| \bar{R}\right\| _{\alpha,I}\right), \label{RF int est}
\end{eqnarray}
for some $0<\alpha'<1,$ $\alpha'=\alpha'(\delta_{0}).$

\begin{thm}
\label{thm: existence}Assume that $\bar{R}\in C^{\alpha}(N)$ and the
constant $K$ is defined by \begin{equation}
K=\sup_{1\leq t<\infty}\left\{ -\int_{1}^{t}\left(\frac{R}{2}-\frac{\tau{}^{2}}{2}\bar{R}\right)_{*}\exp(\int_{1}^{\tau}\frac{s{|M|^{*}}^{2}}{2}ds)d\tau\right\} <\infty.\label{eq:K}\end{equation}
 Then for every $\varphi\in C^{2,\alpha}(\Sigma)$ such that \[
0<\varphi(x)<\frac{1}{\sqrt{K}},\quad\hbox{for\, all}\: x\in\Sigma,\]
there is a unique positive solution $u\in C^{2+\alpha}(N)$
of (\ref{eq:RFu}) with the initial condition\begin{equation}
u(1,\cdot)=\varphi(\cdot).\label{eq:ic}\end{equation}
\end{thm}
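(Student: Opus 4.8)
The plan is to follow the proof of Theorem \ref{thm:existence} almost verbatim, with Lemma \ref{lem:bds} playing the role of Proposition \ref{prop: C0bounds} and the interior Schauder estimate (\ref{RF int est}) playing the role of Proposition \ref{prop:interior est}. Two things simplify relative to the $f$-foliation case. First, since the initial time is already $t=1$, no preliminary rescaling $\tilde u(t,x)=u(t_0 t,x)$ is needed. Second, writing (\ref{eq:RFu}) with $s=\ln t$ so that it reads $\partial_s u=\tfrac12 u^2\Delta u+\cdots$, the coefficient of $\Delta u$ is $\tfrac12 u^2>0$ whenever $u>0$, so parabolicity is automatic on $[1,\infty)$; moreover, since the normalized flow $g(t)$ converges to the round metric, $R$ and $|M|$ are a priori bounded with $|M|\to0$ exponentially, so $\|R\|_{\alpha}$ and $\|M\|_{\alpha}$ are finite on every compact $t$-interval.

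The first step is the a priori $C^{0}$ control. Setting $t_{0}=1$ in Lemma \ref{lem:bds} and using $\delta_*(1)=\delta^*(1)=0$ together with $w_*(1)=(\varphi^*)^{-2}$, every positive solution $u\in C^{2+\alpha}(A_{[1,T]})$ of (\ref{eq:RFu}) with $u(1,\cdot)=\varphi$ satisfies
\[
u^{-2}(t,x)\ \geq\ \frac{1}{t}\,e^{-\int_{1}^{t}\frac{s|M|^{*2}}{2}ds}\left((\varphi^{*})^{-2}+\int_{1}^{t}\Bigl(\tfrac{R}{2}-\tfrac{\tau^{2}}{2}\bar R\Bigr)_{*}e^{\int_{1}^{\tau}\frac{s|M|^{*2}}{2}ds}\,d\tau\right)\ \geq\ \frac{(\varphi^{*})^{-2}-K}{t}\,e^{-\int_{1}^{t}\frac{s|M|^{*2}}{2}ds},
\]
where the last inequality is the definition (\ref{eq:K}) of $K$. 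Since $|M|^{*}\to0$ exponentially, $\int_{1}^{\infty}\tfrac{s|M|^{*2}}{2}ds<\infty$, so the surviving exponential is bounded below by a positive constant; and $(\varphi^{*})^{-2}-K>0$ by the hypothesis $0<\varphi<1/\sqrt K$. Hence $u^{-2}$ stays uniformly positive, so $u$ cannot blow up in finite $t$; and the upper bound of Lemma \ref{lem:bds}, namely $u^{-2}\leq\delta^*(t)+\tfrac1t(\varphi_*)^{-2}e^{-\int_1^t \frac{s|M|_{*}^{2}}{2}ds}$, is finite on compact $t$-intervals because $R$ and $\bar R$ are continuous. Thus there are functions $0<\delta_1(t)\leq\delta_2(t)<\infty$, independent of $T$ and of the solution, with $\delta_1(t)\leq u^{-2}(t,x)\leq\delta_2(t)$.

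With this in hand I would run the standard parabolic continuation argument. Short-time existence from any $C^{2,\alpha}(\Sigma)$ initial datum follows by linearizing (\ref{eq:RFu}) and applying the implicit function theorem in Schauder spaces, producing a solution (in $C^{2+\alpha}$ up to the initial slice) on a time interval whose length depends only on two-sided bounds for the datum and on the source functions there. Setting $U=\{T>0:\exists\,u\in C^{2+\alpha}(A_{[1,1+T]})\text{ solving (\ref{eq:RFu}) with }u(1,\cdot)=\varphi\}$, short-time existence makes $U$ open; and $U$ is closed because, by the previous paragraph, $u$ is uniformly elliptic on any compact interval, so (\ref{RF int est}) gives an a priori $C^{2,\alpha}$ bound on $u(1+T,\cdot)$ and the solution extends by a definite amount. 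Hence $U=(0,\infty)$ and the solution extends to $C^{2+\alpha}(N)$; uniqueness follows from the maximum principle, since the difference of two solutions satisfies a linear homogeneous parabolic equation with zero initial data. The one substantive point, exactly as in Bartnik's construction and in Theorem \ref{thm:existence}, is the second inequality in the display above: the bound $\varphi<1/\sqrt K$ is precisely what forces $u^{-2}$ to stay away from zero for all time. Everything else is the routine continuation machinery.
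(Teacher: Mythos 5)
Your proposal is correct and follows essentially the same route as the paper: the positivity of the lower bound from Lemma \ref{lem:bds} at $t_0=1$ (which is exactly where the hypothesis $\varphi<1/\sqrt{K}$ enters, via $(\varphi^{*})^{-2}-K>0$), followed by short-time existence and the open/closed continuation argument using the interior Schauder estimate (\ref{RF int est}). The only additions beyond the paper's own write-up are the explicit uniqueness remark and the observation that parabolicity is automatic here, both of which are routine and correct.
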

\begin{proof}
The upper bound of (\ref{eq:RFup}) implies that \[
\delta_{*}(t)+\frac{1}{t}(u^{*}(1))^{-2}\exp\left(-\int_{t_{0}}^{t}\frac{s{|M|^{*}}^{2}}{2}ds\right)>0\quad\hbox{for\, all\,}t\geq1.\]

By the short time existence of parabolic equations and the Schauder
estimates, there is $\epsilon>0$ and $u\in C^{2+\alpha}(A_{[1,1+\epsilon]})$
satisfying the initial condition $u(1)=\varphi$ on $[1,1+\epsilon]$
for some $\epsilon>0$. By Lemma \ref{lem:bds}, there are functions
$0<\delta_{1}(t)\leq\delta_{2}(t)<\infty$, $1\leq t$, independent
of $\epsilon$, such that \[
0<\delta_{1}(t)\leq u^{-2}(t,x)\leq\delta_{2}(t)\quad\mbox{for all}\quad t\in[1,1+\epsilon].\]
Let $U=\{t\in\mathbb{R}^{+}:\exists u\in C^{2+\alpha}(A_{[1,1+t]})\hbox{\, satisfying\,(\ref{eq:RFu})\, and\,(\ref{eq:ic})}\}$.
The short time existence guarantees $U$ is open in $\mathbb{R}^{+}$.
From the interior estimate (\ref{RF int est})
, we have an a priori estimate for $||u(1+t,\cdot)||_{2,\alpha}$.
By the short time existence again, the solution can be extended to
$A_{[1,1+t+T]}$ for some $T$ independent of $u$, which shows that
$U$ is closed. Hence $u$ extends to a global solution $u\in C^{2+\alpha}(N)$.
\end{proof}



\begin{cor}
Let $\bar{R}$ and $K$ be as given in Theorem \ref{thm: existence}
. Suppose $H\in C^{2,\alpha}(\Sigma)$ satisfies\[
H(x)>2\sqrt{K}\quad\hbox{for\, all\,}\, x\in\Sigma.\]
Then there is a metric $\bar{g}$ with scalar curvature $\bar{R}$ having
boundary $\Sigma_{1}$ with mean curvature $H$.\end{cor}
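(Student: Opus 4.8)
The plan is to translate the prescribed mean curvature condition on the boundary leaf $\Sigma_1$ into an admissible initial condition for the parabolic equation (\ref{eq:RFu}), and then invoke Theorem~\ref{thm: existence} directly; there is essentially no new analysis to do.

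First I would recall from (\ref{eq:RFH}) that for a metric of the form $\bar{g}=u^{2}dt^{2}+t^{2}g(t)$ with $g(t)$ evolving by the modified Ricci flow, the mean curvature of the leaf $\Sigma_{t}=\{t\}\times\Sigma$ with respect to the outward unit normal $\nu=\frac{1}{u}\partial_{t}$ is $H_{\Sigma_{t}}=\frac{2}{tu}$. In particular $H_{\Sigma_{1}}=\frac{2}{u(1,\cdot)}$, so imposing $H_{\Sigma_{1}}=H$ is exactly the same as choosing the initial datum $\varphi:=u(1,\cdot)=\frac{2}{H}$. Since $H\in C^{2,\alpha}(\Sigma)$ and $H(x)>2\sqrt{K}\geq0$, we have $H>0$ everywhere, hence $\varphi=2/H\in C^{2,\alpha}(\Sigma)$ and $\varphi>0$. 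Moreover the hypothesis $H(x)>2\sqrt{K}$ rearranges to $\varphi(x)=\frac{2}{H(x)}<\frac{1}{\sqrt{K}}$, so that $0<\varphi(x)<\frac{1}{\sqrt{K}}$ for all $x\in\Sigma$, which is precisely the smallness condition required in Theorem~\ref{thm: existence}. (When $K=0$ this condition is vacuous and any positive $\varphi$ is admissible.)

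Applying Theorem~\ref{thm: existence} with this $\varphi$ yields a unique positive solution $u\in C^{2+\alpha}(N)$ of (\ref{eq:RFu}) with $u(1,\cdot)=\varphi$. By the curvature computation of Section~\ref{sec:1}, the equation (\ref{eq:RFu}) is equivalent to the statement that $\bar{g}=u^{2}dt^{2}+t^{2}g(t)$ has scalar curvature $\bar{R}$; and by the mean curvature formula above, the boundary $\Sigma_{1}$ carries mean curvature $\frac{2}{u(1,\cdot)}=\frac{2}{\varphi}=H$ (its induced metric being $g_{1}$). This produces the desired metric $\bar{g}$.

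The only point demanding any care is the bookkeeping of the mean curvature normalization at $t=1$, including the choice of sign of the unit normal, and the observation that inverting the inequality defining the admissible range of initial data for Theorem~\ref{thm: existence} returns exactly the hypothesis $H>2\sqrt{K}$. I do not anticipate a genuine obstacle.
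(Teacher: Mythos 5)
Your proposal is correct and follows exactly the paper's own argument: set $\varphi = 2/H$, note that $H > 2\sqrt{K}$ is equivalent to $0 < \varphi < 1/\sqrt{K}$, apply Theorem \ref{thm: existence}, and read off the boundary mean curvature from the formula $H = \frac{2}{tu}$ in (\ref{eq:RFH}).
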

\begin{proof}
Let ${\displaystyle \varphi(x)=\frac{2}{H(x)}}$. Then the assumption\[
H(x)=\frac{2}{\varphi(x)}>2\sqrt{K}\quad\hbox{for\, all\,}\, x\in\Sigma\]
is equivalent to ${\displaystyle \varphi(x)<\frac{1}{\sqrt{K}}}$.
Theorem \ref{thm: existence} shows that there exists a unique solution
$u\in C^{2+\alpha}(N)$ to the initial value problem, and the resulting
metric has boundary $\Sigma$ with mean curvature $H$ by (\ref{eq:RFH}).
\end{proof}

\begin{thm}
\label{thm:M3-2}Let $u\in C^{2+\alpha}(N)$ be a solution of (\ref{eq:RFu}).
Suppose that $\bar{R}$ is given such that 
$${\displaystyle \int_{1}^{\infty}|\bar{R}|^{*}t^{2}dt<\infty},$$
and suppose
there is a constant $C>0$ such that for all $t\geq1$
$${\displaystyle }||\bar{R}t^{2}||_{\alpha,I_{t}}\leq\frac{C}{t},$$
where  $I_{t}=[t,4t]$.
Then $\bar{g}$ satisfies the asymptotically flat condition for $t>t_{0}$,
where $t_{0}$ is  some fixed constant. Moreover, the Riemannian
curvature $\bar{Rm}$ of the 3-metric $\bar{g}$ on $N$ is H\"{o}lder continuous
and decays as ${\displaystyle |\bar{Rm}|<\frac{C}{t^{3}}}$, and ADM
mass can be expressed as \begin{equation}
m_{ADM}=\lim_{t\rightarrow\infty}\frac{1}{4\pi}\oint_{\Sigma_{t}}\frac{t}{2}(1-u^{-2})d\sigma.\label{eq:ADM}\end{equation}
\end{thm}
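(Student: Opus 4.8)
The plan is to follow the proof of Theorem~\ref{thm:AF} essentially verbatim, with the exponential convergence of the modified Ricci flow \cite{Ham88,Cho91} playing the role the decay hypotheses on $f$ played there. Recall that $g(t)\to\sigma$, $R\to 2$, $\mathcal F\to\mathrm{const}$ and $M\to 0$ exponentially in $C^\infty$, so that for some $c>0$ the quantities $|R-2|^*$, $|M|^{*2}$, $|g-\sigma|$, $|\partial_t M|$ (and the spatial derivatives appearing below) are all $O(e^{-ct})$; in particular $\int_1^\infty s|M|^{*2}\,ds$ and $\int_1^\infty|2-R|^*\,ds$ are finite. The first step is the $C^0$ asymptotics: I claim the barriers of Lemma~\ref{lem:bds} satisfy $1-C/t\le\delta_*(t)\le\delta^*(t)\le 1+C/t$ for $t$ large. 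This follows because the weights $\exp\!\big(-\!\int_\tau^t s|M|^{*2}/2\,ds\big)$ lie in $[1-O(e^{-c\tau}),1]$, the integrands $\big(\tfrac R2-\tfrac{\tau^2}2\bar R\big)_*$ and $\big(\tfrac R2-\tfrac{\tau^2}2\bar R\big)^*$ are $1\pm\big(Ce^{-c\tau}+\tfrac12|\bar R\tau^2|^*\big)$, and the $\tfrac1t\!\int_1^t$ average is controlled using $\int_1^\infty|\bar R|^*\tau^2\,d\tau<\infty$. Feeding this, and the boundedness of the remaining exponential terms of Lemma~\ref{lem:bds}, back in gives $|u^{-2}-1|\le C/t$ on $A_{[t_0,\infty)}$; hence $u^{\pm1}$ are pinched between positive constants there and $\|m\|_{0,I_t}\le C$ uniformly, where $I_t=[t,4t]$.

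Next come the Schauder estimates. For fixed $\tau$ the rescaling $\tilde u(t,x)=u(\tau t,x)$, $t\in[1,4]$, solves the rescaled form of (\ref{eq:RFu}) whose coefficients $R(\tau\,\cdot)$, $\tau^2|M(\tau\,\cdot)|^2$ and source $(\tau t)^2\bar R(\tau t)$ have uniformly bounded $C^\alpha([1,4])$-norms (the last being $\|\bar Rt^2\|_{\alpha,I_\tau}\le C/\tau$); the rescaled metric need no longer solve a Ricci flow, but this is immaterial, since $R$ and $M$ enter only as coefficients and the uniform $C^0$-ellipticity is supplied by Step~1. The interior estimate (\ref{RF int est}) on $[2,4]$, together with the analogous estimate for $m$ obtained from (\ref{eq:RFm}) exactly as in Proposition~\ref{prop:interior est}, and rescaling back, yield $\|m\|_{2+\alpha,[2t,4t]}\le C$ uniformly in $t$, hence $\|1-u^{-2}\|_\alpha+\|t\partial_t u\|_\alpha+\|\nabla u\|_\alpha+\|\nabla^2 u\|_\alpha\le C/t$ on $[2t,4t]$. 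With the exponential decay of $g-\sigma$ and its derivatives this gives the asymptotically flat condition (\ref{AF condition}) in the coordinates attached to $\dot g=dt^2+t^2\sigma$; substituting the $\nabla^2 u$ bound into (\ref{eq:R}) and the corresponding expression for $\bar{Rc}$, the flat-model leading $O(t^{-2})$ terms cancel, leaving $\bar{Rc}\in C^{0,\alpha}(N)$ with $|\bar{Rc}|\le C/t^3$, which, since $\dim N=3$, bounds the full $\bar{Rm}$.

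For the ADM mass I repeat the flux computation of Theorem~\ref{thm:AF} with $\bar g_{ij}=t^2g_{ij}$ in place of $t^2e^{2f}\sigma_{ij}$: this gives $\dot g^{ab}\dot\nabla_a\bar g_{b3}=2u\partial_t u+\tfrac2t u^2-\tfrac1t\mathrm{tr}_\sigma g$ and $\dot\nabla_3(\dot g^{ab}\bar g_{ab})=2u\partial_t u+2\sigma^{ij}M_{ij}$. Since $\mathrm{tr}_\sigma g=2+O(e^{-ct})$ and $g^{ij}M_{ij}=0$ forces $\sigma^{ij}M_{ij}=(\sigma^{ij}-g^{ij})M_{ij}=O(e^{-2ct})$, one obtains $\big(\dot g^{ab}\dot\nabla_a\bar g_{b3}-\dot\nabla_3(\dot g^{ab}\bar g_{ab})\big)u^{-2}t^2=4m+O(t\,e^{-ct})$, whence $m_{ADM}=\tfrac1{4\pi}\lim_{t\to\infty}\oint_{\Sigma_t}\tfrac t2(1-u^{-2})\,d\sigma$. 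That this limit exists follows by showing $t\mapsto\oint_{\Sigma_t}m\,d\sigma$ has $L^1(t_0,\infty)$ derivative: from (\ref{eq:RFm}), $\oint_{\Sigma_t}\partial_t m\,d\sigma$ equals $\tfrac1t$ times the $\Sigma_t$-integral of the right-hand side of (\ref{eq:RFm}), and term by term, after replacing $d\sigma$ by $d\mu_{g(t)}$ (an $O(e^{-ct})$ change) and integrating the two divergence terms by parts, one gets $O(t^{-2})$ from the first two, $O(t\,e^{-ct})$ from the three involving $|M|^2$ or $2-R$, and the integrable contribution $\tfrac{t^2}4\oint_{\Sigma_t}\bar R\,d\sigma$ from the last by the hypothesis $\int_1^\infty|\bar R|^*t^2\,dt<\infty$.

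The adaptation is largely mechanical once the template of Theorem~\ref{thm:AF} is in hand; the real care is in the last two steps, where the powers of $t$ must be balanced against the exponential decay of $R-2$, $M$ and $\partial_t M$ and against the prescribed rate $\|\bar Rt^2\|_{\alpha,I_t}\le C/t$, and where one must verify that the leading $O(t^{-2})$ terms cancel against the flat model $dt^2+t^2\sigma$. It is worth being explicit that what legitimizes all the error estimates above is the $C^\infty$-exponential convergence of the \emph{modified} Ricci flow --- not only of the metric $g(t)$ but of $R$, $\mathcal F$ and their $t$-derivatives.
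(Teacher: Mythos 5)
Your proposal is correct, and the first two steps (the $C^0$ barriers from Lemma \ref{lem:bds} giving $1-C/t\le\delta_*\le\delta^*\le 1+C/t$, and the dilation--Schauder argument giving the uniform $\|m\|_{2+\alpha,I'_\tau}$ bound and hence the decay of $1-u^{-2}$, $t\partial_t u$, $\nabla u$, $\nabla^2 u$ and of $\bar{Rc}$) coincide with the paper's proof almost line for line. Where you genuinely diverge is the identification of the ADM mass. The paper does \emph{not} redo the flux integral: it observes that the leaves $\Sigma_t$ are exponentially close to round spheres, invokes the result of \cite{SWW09} that the Hawking mass of such a foliation converges to $m_{ADM}$, and then simply computes $m_H(\Sigma_t)=\frac{1}{4\pi}\oint_{\Sigma_t}\frac{t}{2}(1-u^{-2})\,d\sigma$ from $A(\Sigma_t)=4\pi t^2$ and $H=2/(tu)$. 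You instead carry out the flux computation of Theorem \ref{thm:AF} against $\dot g=dt^2+t^2\sigma$, using $\mathrm{tr}_\sigma g=2+O(e^{-ct})$ and the tracelessness of $M$ with respect to $g$ to reduce the integrand to $4m+O(te^{-ct})$, and then prove existence of the limit by showing $\frac{d}{dt}\oint_{\Sigma_t}m$ is integrable via (\ref{eq:RFm}); your computation checks out. Your route is more self-contained (it does not outsource the Hawking-mass/ADM-mass comparison to \cite{SWW09}) and it explicitly establishes that the limit in (\ref{eq:ADM}) exists, a point the paper's proof leaves implicit in the citation; the paper's route is shorter and dovetails with the Hawking mass monotonicity discussed in the introduction. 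Either argument is acceptable.
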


\begin{proof}
We compare $\bar{g}$ with the flat metric $\dot{g}=dt^{2}+t^{2}\sigma$,
and get\[
\bar{g}-\dot{g}=(u^{2}-1)dt^{2}+t^{2}(g(t)-\sigma).\]
Since $g(t)$ converges to the round metric $\sigma$ exponentially,
to prove the asymptotic flatness of the metric $\bar{g}$, it suffices
to show that 
\[
|u^{2}-1|+|t\partial_{t}u|+|\nabla_{i}u|\leq\frac{C}{t},\quad i=1,2.
\]
Define $\tilde{u}(t,x)=u(\tau t,x)$. Observe that $u\in C^{2+\alpha}(A_{[\tau,4\tau]})$
satisfies (\ref{eq:RFu}) if and only if $\tilde{u}$ on the interval
$[1,4]$ satisfies \[
{\displaystyle t\frac{\partial\tilde{u}}{\partial t}=\frac{1}{2}\tilde{u}^{2}\Delta\tilde{u}+\frac{t^{2}}{4}|\tilde{M}|^{2}\tilde{u}+\frac{1}{2}\tilde{u}-\frac{\tilde{R}}{4}\tilde{u}^{3}+\frac{t^{2}\tilde{R}}{4}\tilde{u}^{3},}\]
where ${\displaystyle \tilde{M}(t,x)=\tau M(\tau t,x)}$, ${\displaystyle \tilde{R}(t,x)=R(\tau t,x)}$,
and ${\displaystyle \tilde{R}(t,x)=\tau^{2}\bar{R}(\tau t,x)}$.

Applying the Schauder estimates to $\tilde{u}$ on the interval $[1,4]$,
and then rescaling back, we have ${\displaystyle ||u||_{2+\alpha,I_{\tau}'}\leq C}$,
and
\begin{equation}
||m||_{2+\alpha,I_{\tau}'}\leq C||m||_{0,I_{\tau}}+{C\tau}\left(||\tau M||_{\alpha,I_{\tau}}+||R-2||_{\alpha,I_{\tau}}+||\tau^{2}\bar{R}||_{\alpha,I_{\tau}}\right),\label{ineq:m}
\end{equation}
where $I'_{\tau}=[2\tau,4\tau]$, $I_{\tau}=[\tau,4\tau]$, and $C$
is independent of $u$ and $\tau$.  $M$ converges to $0$, and $R$
converges to 2 exponentially fast under Ricci flow. By the assumption ${\displaystyle \int_{1}^{\infty}|\bar{R}|^{*}t^{2}dt<\infty}$, we have
\[
1-\frac{C}{t}\leq\delta_{*}(t)\leq\delta^{*}(t)\leq1+\frac{C}{t}\quad\hbox{for\, all\,}\, t\geq1,
\]
which controls $||m||_{0,I_{\tau}}$. 
Exponential convergence of $M$ and $R-2$, together with the decay assumption
${\displaystyle ||\bar{R}t^{2}||_{\alpha,I_{t}}\leq\frac{C}{t}}$, controls
the second term of (\ref{ineq:m}). Hence there is a uniform bound\[
||m||_{2+\alpha,I_{\tau}'}\leq C\quad\hbox{for\, all\,}\tau\geq1.\]
Expressing this in terms of $u$ and derivatives of $u$ gives \[
||1-u^{-2}||_{\alpha,I_{\tau}'}+||\tau\partial_{\tau}u||_{\alpha,I_{\tau}'}+||\nabla u||_{\alpha,I_{\tau}'}+||\nabla^{2}u||_{\alpha,I_{\tau}'}\leq\frac{C}{\tau}.\]
The estimates for $1-u^{-2}$ and $\nabla u$ show that $\bar{g}$ is
asymptotically Euclidean. The estimate for $\nabla^{2}u$, together
with the expression of the Ricci curvature of $\bar{g}$ in terms of
$u,$ shows that $\bar{Rc}\in C^{0,\alpha}(N)$ and ${\displaystyle \left|\bar{Rc}(t,x)\right|\leq\frac{C}{t^{3}}}$.
Thus, the Riemannian curvature on $N$ is H\"{o}lder continuous
and decays as ${\displaystyle |\bar{Rm}|<\frac{C}{t^{3}}}$.

Let $(\Omega,g)\hookrightarrow M$ be a compact three manifold with
smooth boundary $\Sigma$. The Hawking quasi local mass $m_{H}(\Sigma)$
is defined by \cite{Haw68}\[
m_{H}(\Sigma)=\sqrt{\frac{A(\Sigma)}{16\pi}}\left(1-\frac{1}{16\pi}\int H^{2}d\sigma\right).\]
Although $\Sigma_{t}=\{t\} \times \Sigma$ are not round spheres, the sequence $\Sigma_{t}$ approaches
the round sphere exponentially. Thus the Hawking mass $m_{H}(\Sigma_{t})$
approaches ADM mass of the asymptotically flat 3-manifold $N$ as
$t\rightarrow\infty$ (see \cite{SWW09}). Since the area of $\Sigma$
is normalized to $4\pi$ and the Ricci flow preserves the area, we
have the area ${\displaystyle A(\Sigma_{t})=4\pi t^{2}}$, the mean
curvature ${\displaystyle H=\frac{2}{tu}}$ for each leaf $\Sigma_{t}$.
The Hawking mass ${\displaystyle m_{H}(\Sigma_{t})}$ is given by
\begin{eqnarray*}
{\displaystyle m_{H}(\Sigma_{t})} & = & \sqrt{\frac{A(\Sigma_{t})}{16\pi}}\left(1-\frac{1}{16\pi}\int H_{t}^{2}d\sigma_{t}\right)\\
 & = & \frac{t}{2}\left(1-\frac{1}{16\pi}\int\left(\frac{4}{t^{2}u^{2}}\right)t^{2}d\sigma\right)\\
 & = & \frac{t}{2}\left(1-\frac{1}{4\pi}\int u^{-2}d\sigma\right)\\
 & = & \frac{1}{4\pi}\int\frac{t}{2}(1-u^{-2})d\sigma\end{eqnarray*}
 and then the ADM mass of $N$ is \[
m_{ADM}=\lim_{t\rightarrow\infty}\frac{1}{4\pi}\oint_{\Sigma_{t}}\frac{t}{2}(1-u^{-2})d\sigma.\]

\end{proof}
Theorems \ref{thm: existence} and \ref{thm:M3-2} together thus give
Theorem \ref{thm:M3}. Last, we prove Theorem \ref{thm:M4}.

\begin{thmM4} Let $\bar{R}\in C^{\alpha}(N).$ Suppose that $\bar{R}t^{2}<R$
for $1\leq t<\infty$. Let $0<\eta<1$ be such that\begin{equation}
1-\eta< R-\bar{R}|_{t=1}<(1-\eta)^{-1}.\label{bd R_N}\end{equation}
Then there is a solution $u^{-1}\in C^{2+\alpha}(N)$ such that the
constructed metric on $N$ has curvature uniformly bounded on $A_{[1,2]}$
with totally geodesic boundary $\Sigma$. \end{thmM4}
\begin{proof}
To prove Theorem \ref{thm:M4}, we define
\[
\tilde{u}(t)=\sqrt{\frac{t}{t+1}}u(t+1)\quad\hbox{where\,}\,\ensuremath{t\in(0,\infty)},\]
and obtain a parabolic equation for $\tilde{u}.$ Secondly, the equation
of $\tilde{u}$ has the same form as the $u$-equation\begin{equation}
t\frac{\partial\tilde{u}}{\partial t}(t)=\frac{1}{2}\tilde{u}^{2}\Delta\tilde{u}+\frac{t^{2}}{2}|\tilde{M}|^{2}\tilde{u}+\frac{1}{2}\tilde{u}-\frac{\tilde{R}}{4}\tilde{u}^{3}+\frac{t^{2}\tilde{R}_{N}}{4}\tilde{u}^{3},\label{eq:tilde}\end{equation}
where the fields $\tilde{M}$, $\tilde{R}$, and $\tilde{R}_{N}$
are defined by\begin{eqnarray*}
\left| \tilde{M}(t) \right|^2 & = & \frac{t+1}{t} \left| M(t+1) \right|^2,\\
\tilde{R}(t) & = & R(t+1),\\
t^{2}\tilde{R}_{N}(t) & = & (t+1)^{2}\bar{R}(t+1).\end{eqnarray*}
The equation for the corresponding term $\tilde{w}=\tilde{u}^{-2}$ is
     \begin{equation}\label{tilde{w} in thm 4}
                  t\partial_{t}\tilde{w}=\frac{1}{2\tilde{w}}\Delta \tilde{w}+\frac{3}{2} \tilde{u} \nabla \tilde{u} \cdot \nabla \tilde{w}-\left(\frac{t^{2}}{2}|\tilde{M}|^{2}+1\right)\tilde{w}+\frac{\tilde{R}}{2}-\frac{t^{2}}{2}\tilde{R}_N.  
       \end{equation}
The curvature assumption $\bar{R}t^{2}<R$
for $1\leq t<\infty$ implies that there exist functions 
\[
{\displaystyle \tilde{ \delta}_{*}(t)}=\frac{1}{t}\int_{0}^{t}\left(\frac{\tilde{R}}{2}-\frac{\tau^{2}}{2}\tilde{R}_N\right)_{*}(\tau)\exp\left(-\int_{\tau}^{t}\frac{s{|\tilde{M}|^{*}}^{2}}{2}ds\right)d\tau\]
and
\[
\tilde{\delta}^{*}(t)=\frac{1}{t}\int_{0}^{t}\left(\frac{\tilde{R}}{2}-\frac{\tau^{2}}{2}\tilde{R}_N\right)^{*}(\tau)\exp\left(-\int_{\tau}^{t}\frac{s|\tilde{M}|_{*}^{2}}{2}ds\right)d\tau\]
so that\[
0<\tilde{\delta}_{*}(t)\leq\tilde{\delta}^{*}(t)<\infty\quad\hbox{for\, all}\quad t>0.\]
Let $\ensuremath{\varphi_{\epsilon}\in C^{2,\alpha}(\Sigma)},\ensuremath{0<\epsilon<1}$
be any family of functions satisfying \[
\tilde{\delta}_{*}(\epsilon)\leq\varphi_{\epsilon}^{-2}(x)\leq\tilde{\delta}^{*}(\epsilon).\]
Applying the parabolic maximum principle to (\ref{tilde{w} in thm 4}) for $\tilde{w}=\tilde{u}^{-2}$, we can show that the solution $\tilde{u}_{\epsilon}$ exists for all time $[\epsilon,\infty)$
with initial condition $\varphi_{\epsilon}$ and \[
\tilde{\delta}_{*}(t)\leq\tilde{u}_{\epsilon}^{-2}(t,x)\leq\tilde{\delta}^{*}(t),\quad\epsilon\leq t<\infty,\]
for all $0<\epsilon<1$. 

Now suppose $t_{0}>0$ and $v\in C^{2+\alpha}(A_{I})$, $I=[t_{0},4t_{0}]$,
is a solution of (\ref{eq:tilde}) satisfying \[
\tilde{\delta}_{*}(t)\leq v^{-2}(t,x)\leq\tilde{\delta}^{*}(t),\quad\hbox{for\, all\,}\, t\in I,\]
and define $\tilde{v}(t,x)=v(t/t_{0},x)$. Applying the Schauder estimates
to $\tilde{v}$ on the interval $[1,4]$ and rescaling back, we obtain
an estimate of the form \begin{equation}
||v||_{2+\alpha,I'}\leq C,\quad I'=[2t_{0},4t_{0}],\label{ineq: v}\end{equation}
where $C$ is a constant independent of $v$. Applying (\ref{ineq: v})
to $\tilde{u}_{\epsilon}$ shows, by Arzela-Ascoli theorem, there
is a sequence $\epsilon_{j}\rightarrow0$ such that $\{\tilde{u}_{\epsilon_{j}}\}$
converges uniformly in $C^{2+\alpha}(A_{I})$ for any compact interval
$I\subset\mathbb{R}^{+}$ to the desired solution $\tilde{u}\in C^{2+\alpha}(A_{(0,\infty)}).$ 

Moreover, since $|M|\leq Ce^{-ct}$ for some constants $c$ and $C$,
and (\ref{bd R_N}), there is a small $\epsilon$ such that on $(0,\epsilon)$,  $1-\eta < \tilde{R}-t^{2}\tilde{R}_N<(1-\eta)^{-1}$,
\begin{eqnarray*}
\frac{1}{t}\int_{0}^{t}\exp\left(-\int_{\tau}^{t}\frac{s{|\tilde{M}|^{*}}^{2}}{2}ds\right)d\tau & < & 1,
\end{eqnarray*}
and
\begin{eqnarray*}
\frac{1}{t}\int_{0}^{t}\exp\left(-\int_{\tau}^{t}\frac{s{|\tilde{M}|_{*}}^{2}}{2}ds\right)d\tau & < & 1.
\end{eqnarray*}
It shows that on $(0,\epsilon)$\begin{eqnarray*}
1-\eta\leq\tilde{\delta}_{*}(t) & \leq & \tilde{\delta}^{*}(t)<(1-\eta)^{-1},
\end{eqnarray*}
and
\begin{eqnarray*}
\frac{-\eta t}{1-\eta} & < & 2\tilde{m}(t)<\eta t,
\end{eqnarray*}
where ${\displaystyle \tilde{m}(t)=\frac{t}{2}(1-\tilde{u}^{-2}(t))=m(1+t)-\frac{1}{2}}$.
The rescaling Schauder estimate shows that the covariant derivatives
of $m$ decay, \[
|\nabla m(t)|+|\nabla^{2}m(t)|\leq C(t-1).\]
It follows that the curvature of $\bar{g}$ is bounded on $A_{[1,2]}$. \end{proof}


\begin{cor}
\label{cor:schwarzchild}If we start with the standard metric $(\Sigma,\sigma)$
and prescribe the scalar curvature $\bar{R}\equiv0$, then the metric
$\bar{g}$ obtained from above is exactly a Schwarzschild metric with
ADM mass ${\displaystyle m_{ADM}=\frac{1}{2}}$. \end{cor}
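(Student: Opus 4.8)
The plan is to exploit that the round metric $\sigma$ is a fixed point of the modified Ricci flow, which collapses the scalar curvature equation to an elementary ODE, and then to identify the resulting $3$-metric with the Schwarzschild slice by inspection.

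First I would note that for $g_1=\sigma$ one has $R\equiv 2$, hence $r=\int R\,d\mu/\int d\mu=2$ and $\Delta\mathcal{F}=R-r=0$, so $\mathcal{F}\equiv 0$ and $M_{ij}\equiv 0$; thus $g(t)\equiv\sigma$ is the solution of (\ref{eq:RF}) and $\bar g=u^2dt^2+t^2\sigma$ on $N$. With $\bar R\equiv 0$ we have $\bar R t^2=0<2=R$, and choosing $\eta\in(\tfrac12,1)$ gives $1-\eta<R-\bar R|_{t=1}=2<(1-\eta)^{-1}$, so Theorem \ref{thm:M4} applies and produces $u^{-1}\in C^{2+\alpha}(A_{(1,\infty)})$ with totally geodesic boundary $\Sigma_1$. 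In the construction of that theorem the source functions become $\tilde R\equiv 2$, $\tilde R_N\equiv 0$, $\tilde M\equiv 0$, so the bounding functions collapse to $\tilde\delta_*(t)=\tilde\delta^*(t)=\tfrac1t\int_0^t 1\,d\tau=1$; hence the solution satisfies $\tilde u^{-2}\equiv 1$, equivalently $u^{-2}(t)=1-\tfrac1t$. One may also check this directly: for a spatially homogeneous ansatz, (\ref{eq:RFw}) reduces to $t\,w'=1-w$, i.e.\ $(tw)'=1$, whose solution with $w(1)=u^{-2}(1,\cdot)=0$ is $w(t)=1-\tfrac1t$.

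It remains to identify
\[
\bar g=\left(1-\tfrac1t\right)^{-1}dt^2+t^2\sigma\qquad\text{on }[1,\infty)\times S^2 .
\]
Since the modified Ricci flow preserves area and $A(\Sigma)=4\pi$, the leaf $\Sigma_t$ has area $4\pi t^2$, so $t$ is the area radius; comparison with the standard spatial Schwarzschild metric $(1-2m/r)^{-1}dr^2+r^2d\Omega^2$ identifies $\bar g$ with the region $\{r\geq 2m\}$ of the Schwarzschild manifold of mass $m=\tfrac12$, and the change of variable $\rho=\sqrt{t-1}$ exhibits the smooth extension of $\bar g$ across the totally geodesic horizon $\{t=1\}$ of area $4\pi=4\pi(2m)^2$. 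Finally, by the ADM mass formula (\ref{eq:ADM}) of Theorem \ref{thm:M3-2}, whose decay hypotheses hold trivially for $\bar R\equiv 0$,
\[
m_{ADM}=\lim_{t\to\infty}\frac{1}{4\pi}\oint_{\Sigma_t}\frac{t}{2}\left(1-u^{-2}\right)d\sigma=\lim_{t\to\infty}\frac{t}{2}\cdot\frac1t=\frac12 .
\]
There is no serious obstacle here: the only point requiring care is that the a priori bounds of the construction pin down $u^{-2}$ exactly (rather than merely trapping it), after which the computation is the textbook identification of Schwarzschild in area-radius coordinates.
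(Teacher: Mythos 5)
Your proposal is correct and follows essentially the same route as the paper: both observe that $\sigma$ is a fixed point of the modified Ricci flow so that $R\equiv2$, $M\equiv0$, $\tilde R_N\equiv0$, whence $\tilde\delta_*=\tilde\delta^*\equiv1$ forces $\tilde u\equiv1$, i.e. $u^{-2}=1-\tfrac1t$ and $\bar g=(1-\tfrac1t)^{-1}dt^2+t^2\sigma$. Your additional checks (the explicit ODE $(tw)'=1$, the applicability of Theorem \ref{thm:M4}, and the ADM mass computation) are sound elaborations of details the paper leaves implicit.
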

\begin{proof}
Since the initial metric is the standard round metric, the modified
Ricci flow doesn't change the metric and $g(t)=\sigma
$, $R\equiv2$,
and $M_{ij}\equiv0$ for all $t\geq1$. Since $\bar{R}\equiv0$, the
rescaling fields are \begin{eqnarray*}
{\displaystyle \tilde{R}=2,\quad\tilde{M}_{ij}\equiv0,\quad\hbox{and}\quad\tilde{R}_{N}\equiv0}\end{eqnarray*}
 for all $t\geq0$. Moreover the a priori bounds are $\tilde{\delta}_{*}(t)=1$
and $\tilde{\delta}^{*}(t)=1$ for all $t\geq0$. Hence we have the solution
$\tilde{u}\equiv1$ and the metric \begin{eqnarray*}
{\displaystyle \bar{g}=\frac{1}{1-\frac{1}{t}}dt^{2}+t^{2}g_{S^{2}}.}\end{eqnarray*}

\end{proof}

\thanks{
I would like to thank Professor Mu-Tao Wang and  Maria Gordina, for their patience and discussions. I also want to extend my gratitude to Panagiota Daskalopoulos, and Richard Hamilton for their time and generosity. I would also like to express my sincere appreciation to Lan-Hsuan Huang, Michael Eichmair, and Ye-Kai Wang for their encouragement.  
}

\bibliographystyle{spmpsci}      
\bibliography{references}   

\begin{thebibliography}{10}
\providecommand{\url}[1]{{#1}}
\providecommand{\urlprefix}{URL }
\expandafter\ifx\csname urlstyle\endcsname\relax
  \providecommand{\doi}[1]{DOI~\discretionary{}{}{}#1}\else
  \providecommand{\doi}{DOI~\discretionary{}{}{}\begingroup
  \urlstyle{rm}\Url}\fi

\bibitem{Bar86}
Bartnik, R.: {The mass of an asymptotically flat manifold}.
\newblock Comm. pure Appl. Math. \textbf{39}(5), 661--693 (1986)

\bibitem{Bar89}
Bartnik, R.: {New definition of quasilocal mass}.
\newblock Phys. Rev. Lett. \textbf{62}(20), 2346 -- 2348 (1989)

\bibitem{Bar93}
Bartnik, R.: {Quasi-spherical metrics and prescribed scalar curvature}.
\newblock J. Differential Geometry \textbf{37}(1), 37--71 (1993)

\bibitem{CBY}
Choquet-Bruhat, Y., York, J.: {The Cauchy Problem}.
\newblock General relativity and gravitation \textbf{1}, 99--172 (1980)

\bibitem{Cho91}
Chow, B.: {The Ricci flow on surfaces}.
\newblock J. Differential Geometry \textbf{33}(2), 325--334 (1991)

\bibitem{Ham88}
Hamilton, R.: {The Ricci flow on surfaces}.
\newblock Math. and General Relativity, Contemporary Math. \textbf{71},
  237--262 (1988)

\bibitem{Haw68}
Hawking, S.: {Gravitational radiation in an expanding universe}.
\newblock J. Math. Phys. \textbf{9}, 598--604 (1968)

\bibitem{LSU68}
Ladyzhenskaia, Solonnikov, Uralceva: {Linear and quasi-linear equations of
  parabolic type}.
\newblock American Mathematical Society (1968)

\bibitem{SY79}
Schoen, R., Yau, S.T.: {On the Proof of the Positive Mass Conjecture in General
  Relativity}.
\newblock Comm. Math. Phys. \textbf{65}(1), 47--76 (1979)

\bibitem{ST02}
Shi, Y., Tam, L.F.: {Positive mass theorem and the boundary behaviors of
  compact manifolds with nonnegative scalar curvature}.
\newblock J. Differential Geometry \textbf{62}(1), 79--125 (2002)

\bibitem{SWW09}
Shi, Y., Wang, G., Wu, J.: On the behavior of quasi-local mass at the infinity
  along nearly round surfaces.
\newblock Annals of Global Analysis and Geometry \textbf{36}(4), 419--441
  (2009)

\bibitem{Smi09}
Smith, B.: Black hole initial data with a horizon of prescribed geometry.
\newblock General Relativity and Gravitation \textbf{41}(5), 1013--1024 (2009)

\bibitem{SW00}
Smith, B., Weinstein, G.: {On the connectedness of the space of initial data
  for the Einstein equations}.
\newblock Electron. Res. Announc. Amer. Math. Soc. \textbf{6}, 52--63 (2000)

\bibitem{SW04}
Smith, B., Weinstein, G.: Quasiconvex foliations and asymptotically flat
  metrics of non-negative scalar curvature.
\newblock Communications in Analysis and Geometry \textbf{12}(3) (2004)

\bibitem{Wal84}
Wald, R.M.: {General Relativity}.
\newblock University of Chicago Press (1984)

\end{thebibliography}

\end{document}